\newcommand{\cal}[1]{\mathcal{#1}}
\theoremstyle{plain}
\newtheorem*{theo}{Theorem}
\newtheorem{lemma}{Lemma}[section]
\newtheorem{theorem}[lemma]{Theorem}
\newtheorem{proposition}[lemma]{Proposition}
\newtheorem{corollary}[lemma]{Corollary}
\theoremstyle{definition}
\newtheorem{definition}[lemma]{Definition}
\newtheorem{example}[lemma]{Example}
\let\egthree=\phi
\let\phi=\varphi
\let\varphi=\egthree
\begin{document}
\title[Exponential mixing of the Teichm\"uller flow]
{Exponential mixing of the  Teichm\"uller flow on affine invariant manifolds}
\author{Ursula Hamenst\"adt}
\thanks
{AMS subject classification: 37A25,37D40,30F60}
\date{September 19, 2025}

\begin{abstract}
Let $S$ be an oriented surface of
genus $g\geq 0$ with $m\geq 0$ punctures
and $3g-3+m\geq 2$. We give a new proof based on 
symbolic coding of 
the following result of Avila and Gou\"ezel. 
The Teichm\"uller flow 
on the moduli space of abelian or quadratic differentials of $S$
is exponentially mixing 
with respect to any ${\rm SL}(2,\mathbb{R})$-invariant
ergodic Borel probability measure. 
\end{abstract}

\maketitle


\section{Introduction}

An oriented surface $S$ of finite type is a
closed oriented surface of genus $g\geq 0$ from which $m\geq 0$
points, so-called \emph{punctures},
have been deleted. We assume that $3g-3+m\geq 2$,
that is, that $S$ is not a sphere with at most four
punctures or a torus with at most one puncture.
The Euler characteristic of $S$ is negative.

The \emph{Teichm\"uller space} ${\cal T}(S)$
of $S$ is the quotient of the space of all complete 
finite area hyperbolic
metrics on $S$ under the action of the
group of diffeomorphisms of $S$ which are isotopic
to the identity. The sphere bundle 
\[\tilde {\cal Q}(S)\to {\cal T}(S)\] 
of all \emph{holomorphic
quadratic differentials} of area
one can naturally be identified with the unit cotangent
bundle for the \emph{Teichm\"uller metric}.
If the surface $S$ has punctures, that is,  if $m>0$, then
we define a holomorphic quadratic differential 
on $S$ to be a meromorphic quadratic differential on 
the closed Riemann surface obtained from $S$ by filling
in the punctures, with a simple pole at each
of the punctures and no other poles.

The 
\emph{mapping class group}
${\rm Mod}(S)$ of all isotopy classes of
orientation preserving self-diffeomorphisms of $S$
naturally acts on $\tilde {\cal Q}(S)$. 
The quotient 
\[{\cal Q}(S)=\tilde {\cal Q}(S)/{\rm Mod}(S)\]
is the \emph{moduli space of area one
quadratic differentials}.
It can be partitioned into
so-called \emph{strata} consisting of differentials with the 
same number and multiplicity of zeros and poles. 
The strata need not be
connected, however they have at most two connected
components \cite{L08}. 

If the surface $S$ is closed, that is,  if $m=0$, 
then we can also consider the 
bundle 
$\tilde {\cal H}(S)\to {\cal T}(S)$ of
area one \emph{abelian differentials}.
It descends to the moduli
space 
\[{\cal H}(S)=\tilde {\cal H}(S)/ {\rm Mod}(S)\]
of holomorphic one-forms defining a singular
euclidean metric of area one on $S$.
Again this moduli space decomposes into strata with the
same number and multiplicity of zeros. 
Strata are in general not connected, but the
number of their connected components is at most 3
\cite{KZ03}. 

The group ${\rm SL}(2,\mathbb{R})$ admits a natural action 
on a component $Q$ of a stratum of abelian or quadratic differentials. 
The action of the diagonal subgroup is called 
the \emph{Teichm\"uller flow} $\Phi^t$. 
An \emph{affine invariant manifold} ${\cal C}$ in $Q$ 
is an ${\rm SL}(2,\mathbb{R})$-invariant 
suborbifold.
By the groundbreaking work of
Eskin, Mirzakhani and Mohammadi \cite{EMM15}, affine invariant manifolds in $Q$ are precisely the 
supports of ${\rm SL}(2,\mathbb{R})$-invariant ergodic probability measures on $Q$.
The invariant measure is contained in the Lebesgue measure class of its support. 
Note that as the double orientation cover of an affine invariant manifold in a component of 
a stratum of quadratic differentials is an affine invariant manifold in a component of a 
stratum of abelian differentials, the results of \cite{EMM15} apply to affine invariant 
manifolds in components of strata of quadratic differentials.


The goal of this 
article is to give an alternative 
proof of the following result of Avila and Gou\"ezel \cite{AG13}.

\begin{theo}\label{exponentialmix}
The Teichm\"uller flow is exponentially mixing with respect to any 
${\rm SL}(2,\mathbb{R})$-invariant ergodic probability measure on ${\cal Q}(S)$ or
${\cal H}(S)$. 
\end{theo}

The test functions for which exponential mixing holds are 
square integrable H\"older functions.



The beautiful argument in \cite{AG13} uses the full force of the 
${\rm SL}(2,\mathbb{R})$ action and relies on the establishment of 
a spectral gap for the foliated Laplacian on the ${\rm SL}(2,\mathbb{R})$-orbits. 
Earlier work of Avila, Gou\"ezel and Yoccoz \cite{AGY06} and Avila and Resende \cite{AR09}
established the same result for components of strata of abelian or quadratic differentials, 
respectively, using a symbolic coding for the Teichm\"uller flow and not relying on 
the ${\rm SL}(2,\mathbb{R})$-action.

The proof of the theorem we give follows the strategy of \cite{AGY06}. 
While the coding in \cite{AGY06} is based on Rauzy induction, we embark from 
a symbolic coding for the Teichm\"uller flow on components of strata of 
abelian differentials constructed in \cite{H11}. 
Using the fact that 
in period coordinates for components of strata of abelian differentials, 
an affine invariant manifold is cut out 
by linear equations with real coefficients,  and a localization procedure 
as in 
Section 5 of \cite{H11}, 
we construct a coding for the 
Teichm\"uller flow on an affine invariant manifold 
by the suspension of a Markov shift on countably many symbols. 
This construction of independent interest 
is carried out in Section \ref{sec:shift}.
 
In Section \ref{sec:ex} we adapt the 
Markov shift so that the criterion for exponential mixing 
in Section 2 of \cite{AGY06} can directly be applied.
The necessary control of the 
roof function defining the suspension flow is established
in Section \ref{sec:control}. It is based on 
an entropy argument inspired by \cite{S03}. 
The proof of the main theorem is completed 
in the short Section \ref{sec:exp}.
 The preliminary Section \ref{sec:train} collects some technical results from the 
literature and introduces some notations used throughout the article. 

\bigskip

\noindent
{\bf Acknowledement:} The major part of this work 
was carried out during two visits of 
the MSRI in Berkeley (in fall 2007 and in fall 2011)  
and at the Hausdorff Institut in Bonn in spring 2010. 
I am very grateful to these two institutions for
their hospitality. I am indebted to Giovanni Forni for reminding me that
the main theorem is due to Avila and Gou\"ezel and for useful comments
regarding the redaction of this article.

\section{Train tracks and geodesic laminations}\label{sec:train}

In this section we
summarize some constructions from
\cite{PH92,H11,H24}
which will be used throughout the paper.

\subsection{Geodesic laminations}
Let $S$ be an
oriented surface of
genus $g\geq 0$ with $m\geq 0$ punctures and where $3g-3+m\geq 2$.
A \emph{geodesic lamination} for a complete
hyperbolic structure on $S$ of finite volume is
a \emph{compact} subset of $S$ which is foliated into simple
geodesics.
A geodesic lamination $\lambda$ is called \emph{minimal}
if each of its half-leaves is dense in $\lambda$. A geodesic lamination $\lambda$ on $S$ is said to
\emph{fill up $S$} if its complementary regions 
are all topological discs or once
punctured monogons.

\begin{definition}[Definition 2.1 of \cite{H24}]\label{large}
A geodesic lamination $\lambda$
is called \emph{large} if $\lambda$ fills up
$S$ and if
moreover $\lambda$ can be 
approximated in the \emph{Hausdorff topology}
by simple closed geodesics.
\end{definition}

Since every minimal geodesic lamination can be 
approximated in the Hausdorff topology by simple 
closed geodesics,
a minimal geodesic
lamination which fills up $S$ is large. However, there are large
geodesic laminations with finitely many leaves.

The \emph{topological type} of a large geodesic
lamination $\nu$ is a tuple 
\[(m_1,\dots,m_\ell;-m)\text{ where }1\leq m_1\leq \dots \leq m_\ell,\,
\sum_{i}m_i=4g-4+m\]
such that the complementary regions of $\nu$ which are 
topological discs are $m_i+2$-gons (Section 2.1 of \cite{H24}). 

A \emph{measured geodesic lamination} is a geodesic lamination
$\lambda$ equipped with a translation invariant transverse
measure. As it is customary, 
we denote by ${\cal M\cal L}$ the space of measured geodesic laminations
equipped with the weak$^*$-topology, and by 
${\cal P\cal M\cal L}$ the space of \emph{projective measured geodesic laminations}.
The measured geodesic lamination $\mu\in {\cal
M\cal L}$ \emph{fills up $S$} if its support fills up $S$.
This support is then necessarily connected and hence large. 
The \emph{projectivization} of a measured geodesic lamination
which fills up $S$ is also said to fill up $S$. 
We call $\mu\in {\cal M\cal L}$ 
\emph{strongly uniquely ergodic}
if the support of $\mu$ fills up $S$ and admits a unique
transverse measure up to scale. 
There is a distinguished symmetric continuous function 
$\iota:{\cal M\cal L}\times {\cal M\cal L}\to [0,\infty)$, the so-called 
\emph{intersection form}, which extends the geometric intersection number
of simple closed curves.

\subsection{Train tracks}
A \emph{train track} on $S$ is an embedded
1-complex $\tau\subset S$ whose edges
(called \emph{branches}) are smooth arcs with
well-defined tangent vectors at the endpoints. At any vertex
(called a \emph{switch}) the incident edges are mutually tangent.
The complementary regions of the
train track have negative Euler characteristic, which means
that they are different from discs with $0,1$ or
$2$ cusps at the boundary and different from
annuli and once-punctured discs
with no cusps at the boundary.
We always identify train
tracks which are isotopic.
A train track is called \emph{generic} if all switches are
at most trivalent. 
Throughout we use the book \cite{PH92} as the main reference for 
train tracks.


A train track or a geodesic lamination $\eta$ is
\emph{carried} by a train track $\tau$ if
there is a map $F:S\to S$ of class $C^1$ which is homotopic to the
identity and maps $\eta$ into $\tau$ in such a way 
that the restriction of the differential of $F$
to the tangent space of $\eta$ vanishes nowhere;
note that this makes sense since a train track has a tangent
line everywhere. We call the restriction of $F$ to
$\eta$ a \emph{carrying map} for $\eta$.
Write $\eta\prec
\tau$ if the train track $\eta$ is carried by the train track
$\tau$. Then every geodesic lamination $\nu$ which is carried
by $\eta$ is also carried by $\tau$.

A train track \emph{fills up} $S$ if its complementary
components are topological discs or once punctured 
monogons.  Note that such a train track
$\tau$ is connected.
Let $\ell\geq 1$ be the number of those complementary 
components of $\tau$ which are topological discs.
Each of these discs is an $m_i+2$-gon for some $m_i\geq 1$
$(i=1,\dots,\ell)$. The
\emph{topological type} of $\tau$ is defined to be
the ordered tuple $(m_1,\dots,m_\ell;-m)$ where
$1\leq m_1\leq \dots \leq m_\ell$; then $\sum_im_i=4g-4+m$.
If $\tau$ is \emph{orientable}, that is, there exists
a consistent orientation of the branches of $\tau$, 
then $m=0$ and $m_i$ is even 
for all $i$ (Section 2.2 of \cite{H24}). 

A \emph{transverse measure} on a generic train track $\tau$ is a
nonnegative weight function $\mu$ on the branches of $\tau$
satisfying the \emph{switch condition}:
for every trivalent switch $s$ of $\tau$,  the sum of the weights
of the two \emph{small half-branches} incident on $s$ (which have the 
same inward pointing tangent at $s$) 
equals the weight of the \emph{large half-branch} (see \cite{PH92} for more information).
The space ${\cal V}(\tau)$ of all transverse measures
on $\tau$ has the structure of a cone in a finite dimensional
real vector space, and
it is naturally homeomorphic to the
space of all measured geodesic laminations whose
support is carried by $\tau$.
The train track is called
\emph{recurrent} if it admits a transverse measure which is
positive on every branch (see \cite{PH92} for 
more details). 
A train track $\tau$ of topological type $(m_1,\dots,m_\ell;-m)$
is \emph{fully recurrent} if $\tau$  carries
a large minimal geodesic lamination of the same topological type (Definition 2.6 of \cite{H24}). 

A \emph{vertex cycle} of a traintrack $\tau$ is an extreme point for the space of 
transverse measures on $\tau$. The support of a vertex cycle is a simple closed 
curve, and this curve is either embedded in $\tau$ or a \emph{dumbbell}, that is, 
it consists of two embedded simple loops in $\tau$ with one cusp at the boundary, connected
by an embedded arc connecting the cusps. 
 
If $e$ is a \emph{large} branch of $\tau$ then we can perform a
right or left \emph{split} of $\tau$ at $e$ (see \cite{PH92}). 
If $\tau$ is of topological type 
$(m_1,\dots,m_\ell;-m)$, 
if $\nu$ is a minimal geodesic lamination of the same topological type as
$\tau$ which is 
carried by $\tau$ and if $e$ is a large branch
of $\tau$, then there is a unique choice of a right or
left split of $\tau$ at $e$ such that the split track $\eta$ 
carries $\nu$. In particular, $\eta$ is fully recurrent \cite{H24}.

To each train track $\tau$ 
which fills up $S$ one can
associate a \emph{dual bigon track} $\tau^*$ 
(Section 3.4 of \cite{PH92}).
There is a bijection between
the complementary components of $\tau$ and those
complementary components of $\tau^*$ which are
not \emph{bigons}, i.e. discs with two cusps at the
boundary. This bijection maps
a component $C$ of $\tau$ which is an $n$-gon for some
$n\geq 3$ to an $n$-gon component of 
$\tau^*$ contained in $C$, and it maps a once punctured
monogon $C$ to a once punctured monogon contained in $C$.
If $\tau$ is orientable then the orientation of $S$ and
an orientation of $\tau$ induce an orientation on 
$\tau^*$, and hence $\tau^*$ is orientable.

There is a notion of carrying for bigon tracks which 
is analogous to the notion of carrying for train tracks.
A train track $\tau$ 
is called \emph{fully transversely recurrent}
if its dual bigon track 
$\tau^*$ carries a large minimal geodesic lamination
$\nu$ of the same topological type as $\tau$.

\begin{definition}[Definition 2.8 of \cite{H24}]\label{largett}
A train track $\tau$ of topological type $(m_1,\dots,m_\ell;-m)$ 
is called \emph{large} if
$\tau$ is fully recurrent and fully transversely recurrent.
\end{definition}

For a large train track $\tau$ let 
${\cal V}^*(\tau)\subset {\cal M\cal L}$ 
be the set of all measured geodesic
laminations whose support is carried by $\tau^*$. Each of 
these measured geodesic
laminations corresponds to a \emph{tangential measure} on 
$\tau$ (see \cite{PH92,H24} for more information).  With this identification,
the pairing
\begin{equation}\label{intersectionpairing}
(\nu,\mu)\in {\cal V}(\tau)\times 
{\cal V}^*(\tau)\to \sum_b\nu(b)\mu(b)
\end{equation}
is just the restriction of the intersection form $\iota$ 
on measured lamination space
(Section 3.4 of \cite{PH92}).
Moreover, 
${\cal V}^*(\tau)$ is naturally homeomorphic to 
a convex cone in a real vector space. The dimension of this cone
coincides with the dimension of ${\cal V}(\tau)$.


\subsection{Components of strata and affine invariant manifolds}\label{components}

To each component $Q$ of a stratum of abelian or quadratic differentials 
one can associate a collection ${\cal L\cal T}(Q)$ of large
train tracks on $S$ (Section 3 of \cite{H24}). 
A train track $\tau\in {\cal L\cal T}(Q)$ is characterized by the following 
properties. First, if $Q$ consists of differentials with zeros of order $m_i$ and $m$ poles, then 
$\tau$ is of topological type $(m_1,\dots,m_\ell;-m)$. Moreover, 
let us assume that $\mu,\nu$ are measured geodesic laminations whose supports are
of the same topological type 
as $\tau$ and that $\mu$ is carried by $\tau$, $\nu$ is carried by $\tau^*$.
Then the pair $(\mu,\nu)$ \emph{binds} $S$, that is, for any geodesic lamination $\beta$, we have 
$\iota(\mu+\nu,\beta)>0$. Hence there exists 
a quadratic differential with \emph{vertical measured lamination} $\mu$ and 
\emph{horizontal measured lamination}
$\nu$ (which is of area one if we have $\iota(\mu,\nu)=1$). This differential 
 is contained in $\mathbb{R}_+{Q}$, that is, in the space of differentials 
whose area normalization is contained in $Q$. 
 As in \cite{H24} we denote by $Q(\tau)\subset Q$ the set of all quadratic 
(or abelian) differentials of this form.

In view of this, the set  $Q(\tau)$ can be thought of as an enlargement of a subset of $Q$ with a 
\emph{local product structure}. Such a set  
is homeomorphic to a product $A\times B\times (-\epsilon,\epsilon)$ 
where $A,B$ are sets 
of projective measured
geodesic laminations so that for all $[\xi]\in A,[\zeta]\in B$ the pair $([\xi],[\zeta])$ binds $S$ (this 
makes sense for projective measured laminations). 
The identification is  
via a map which associates to a (marked) quadratic or abelian differential $z$ the pair
$([z^v],[z^h])$ of its projective vertical and horizontal measured geodesic laminations as well as 
a scaling parameter with respect to 
a local section $[z^v]\to z^v$ of the fibration ${\cal M\cal L}\to {\cal P\cal M\cal L}$. 
Note however that $Q(\tau)$ is not a product subset of $Q$ 
as there may be pairs of projective measured laminations 
$([\xi],[\zeta])$ so that $[\xi]$ is carried by $\tau$, $[\zeta]$ is carried by $\tau^*$ but that the pair 
$([\xi],[\zeta])$ does not bind $S$, or the pair binds  but the differential it defines is contained in 
a boundary stratum of $Q$.

For each $q\in Q$ there exists a train track 
$\tau\in {\cal L\cal T}(Q)$ 
so that $q\in Q(\tau)$ (Proposition 3.3 of \cite{H24}). 
Motivated by 
this correspondence, one defines a set ${\cal L\cal L}(Q)$ of large geodesic laminations
as the closure with respect to 
the Hausdorff topology of 
the set of minimal large geodesic laminations which are 
carried by some train track $\tau \in {\cal L\cal T}(Q)$
and are of the same topological type as $\tau$.

An \emph{affine invariant manifold} in a component $Q$ of a stratum 
of abelian differentials is a closed ${\rm SL}(2,\mathbb{R})$-invariant subset ${\cal C}$ of
$Q$ which can be cut out by linear equations with real 
coefficients in complex \emph{period coordinates} of $Q$. 
Equivalently, by the groundbreaking results of Eskin and Mirzakhani and Mohammadi,
it is the support of an ergodic ${\rm SL}(2,\mathbb{R})$-invariant probability measure on 
$Q$, and every ergodic ${\rm SL}(2,\mathbb{R})$-invariant probability measure on 
$Q$ arises in this way \cite{EM18, EMM15}. 

Up to scaling, 
the ${\rm SL}(2,\mathbb{R})$-invariant measure 
is the standard Lebesgue measure defined by the restriction of the period coordinates.
This measure is absolutely continuous 
with respect to the \emph{stable} and \emph{unstable foliation}, locally defined 
by differentials with the same horizontal and vertical measured geodesic 
lamination, respectively. 

\section{Symbolic coding for the Teichm\"uller flow on affine invariant manifolds}
\label{sec:shift}

Consider an  affine invariant manifold ${\cal C}$ contained in a component
${\cal Q}$ of a stratum of abelian differentials. Note that an affine invariant
manifold in a component of a stratum of quadratic differentials admits a double
orientation cover which is contained in a stratum of abelian
differentials. This construction commutes with the ${\rm SL}(2,\mathbb{R})$-action and
hence the orientation cover is an affine invariant manifold. 
Thus for the purpose of symbolic coding, it suffices to assume
that ${\cal Q}$ is a component of a stratum of abelian differentials. 

Denote by $\lambda$ the unique ${\rm SL}(2,\mathbb{R})$-invariant
Borel probability measure on $Q$ whose support equals ${\cal C}$.
The goal of this 
section is to summarize and extend some results from
\cite{H11} and construct a Markov shift with countably many symbols
which defines a symbolic coding of the restriction
of the Teichm\"uller flow $\Phi^t$ 
to an invariant Borel set of ${\cal C}$
of full $\lambda$-measure. 
The starting point is the following main result of \cite{H11}.

\begin{theorem}[Theorem 1 of \cite{H11}]\label{coding}
Let $Q$ be a component of a stratum of quadratic or abelian
differentials.
Then there exists
\begin{itemize}
\item  a topologically transitive subshift of finite type 
$(\Omega,\sigma)$, 
\item a $\sigma$- invariant dense Borel set ${\cal U}\subset \Omega$
containing all normal sequences, 
\item 
a suspension $(X,\Theta^t)$ of $\sigma$ over ${\cal U}$, given
by a positive bounded continuous roof function $\rho$ on ${\cal U}$ 
\end{itemize}
and a finite-to-one semi-conjugacy 
$\Xi:(X,\Theta^t)\to ({\cal Q},\Phi^t)$
which maps the space of 
$\Theta^t$-invariant ergodic Borel probability measures on 
$X$ continuously onto the space of $\Phi^t$-invariant ergodic
Borel probability measures on $Q$.
\end{theorem}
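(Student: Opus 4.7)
The plan is to construct the coding via bi-infinite sequences of train track splits. The main ingredient is the set $\mathcal{L\mathcal{T}}(Q)$ of large train tracks associated to $Q$, together with the product-like description of the sets $Q(\tau)$ from Section \ref{components}: a point in $Q(\tau)$ is recorded by a pair $([\xi],[\zeta])$ of projective measured laminations, where $\xi$ is carried by $\tau$, $\zeta$ is carried by the dual bigon track $\tau^{*}$, together with a scale parameter. Under the Teichm\"uller flow the horizontal measures contract and the vertical measures expand, so combinatorially the carrying train track $\tau$ must eventually be split at some large branch. Iterating such splittings in both time directions should furnish the desired symbolic dynamics.

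Concretely, I would first invoke Proposition 3.3 of \cite{H24} to pick a finite family $\tau_1,\dots,\tau_N\in \mathcal{L\mathcal{T}}(Q)$ such that the sets $Q(\tau_i)$ cover $Q$. Out of these, build the alphabet of $\Omega$ from labeled elementary splits (or bounded splitting blocks) relating the $\tau_i$, and let transitions be the obvious compatibility of incoming and outgoing tracks; this produces a subshift of finite type, transitive because the Teichm\"uller flow is transitive on $Q$ and any two large train tracks of the same topological type are connected by a sequence of splits. The distinguished subset $\mathcal{U}$ should consist of those bi-infinite sequences whose forward and backward Hausdorff limits of split tracks are minimal geodesic laminations forming a binding pair; this is $\sigma$-invariant, dense, and certainly contains all normal (generic) sequences. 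The roof function $\rho(\omega)$ is the logarithmic Teichm\"uller time needed to flow from one splitting to the next, and the semi-conjugacy $\Xi$ sends $(\omega,t)$ to the area-one abelian differential whose vertical and horizontal measured laminations are (suitably normalized) the forward and backward limits, flowed by time $t$.

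The main obstacle is the boundedness and continuity of the roof function $\rho$ together with the finite-to-one property of $\Xi$. Boundedness requires that starting from any $q\in Q(\tau)$ a split must occur within uniformly bounded Teichm\"uller time; this is achieved by choosing the family $\tau_i$ and the blocks of splits so that the local geometry of each $Q(\tau_i)$ controls the splitting rate, arguing that once some large branch acquires a vertical-to-horizontal ratio exceeding a fixed threshold, the track splits within a definite amount of time. The finite-to-one property follows because a differential $q$ belongs to $Q(\tau)$ for only finitely many $\tau$ among the chosen family, and on the binding set the pair of measured laminations of $q$ essentially determines the splitting sequence. Finally, the correspondence of invariant ergodic probability measures is a standard consequence of the fact that $\Xi$ is a measurable, finite-to-one, $\Phi^{t}$-equivariant map which restricts to a Borel isomorphism on a set of full measure, so both push-forward of $\Theta^{t}$-invariant measures and pull-back of $\Phi^{t}$-invariant measures work and are continuous in the weak$^{*}$-topology.
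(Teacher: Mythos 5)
This statement is cited verbatim as Theorem~1 of \cite{H11}; the paper does not prove it, but Section~\ref{sec:shift} recalls enough of the \cite{H11} construction that your sketch can be checked against it. Your general strategy---coding by splitting sequences of large train tracks, with the roof function recording elapsed Teichm\"uller time---is the right one and matches the construction recalled here.

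That said, several specifics in your sketch diverge from the construction the paper actually uses, and a few of these are genuine gaps rather than cosmetic choices. First, the alphabet is not ``bounded splitting blocks relating a finite covering family $\tau_1,\dots,\tau_N$''; it is the finite set ${\cal E}(Q)$ of \emph{numbered} train tracks (branches labeled, considered modulo ${\rm Mod}(S)$), and the transition is a \emph{full numbered split}: one simultaneous split at \emph{every} large branch, recording left/right choices and the induced renumbering. This is not a detail---the simultaneous split at all large branches is exactly what makes the roof function bounded above by a uniform $r_0$, while a single elementary split would not; your proposed argument (``once a ratio exceeds a threshold a split must occur within a definite time'') is the right intuition, but it only closes when transitions are full splits. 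Second, the set ${\cal U}$ cannot be defined by asking the forward/backward Hausdorff limits to be \emph{minimal} laminations forming a binding pair: the semi-conjugacy $\Xi$ must assign a \emph{unique} differential to a sequence in ${\cal U}$, and for that you need the forward intersection $\cap_i{\cal V}(x_i)$ and the backward intersection $\cap_i{\cal V}^*(x_{-i})$ each to be a single ray, i.e.\ \emph{unique ergodicity} with support in ${\cal L\cal L}(Q)$ (``doubly uniquely ergodic sequences''). Minimality alone admits a positive-dimensional simplex of transverse measures and the map $\Xi$ would not be well-defined there. Third, the roof function is concretely $\rho(x_i)=\log\bigl(\mu(x_0)/\mu(x_1)\bigr)$ for the (unique) transverse measure $\mu$ determined by the future of the sequence; it depends only on the future, which is what feeds into the later one-sided-shift constructions. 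Finally, the finite-to-one property of $\Xi$ does not quite follow from ``$q$ belongs to $Q(\tau)$ for only finitely many $\tau$''---that is true, but the real point is that a uniquely ergodic binding pair $(\mu,\nu)$ of laminations, together with the requirement that the coding be a splitting sequence, determines the sequence of unnumbered tracks, and the numbering can be chosen in only finitely many ways. You should name this step explicitly.
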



We shall use some additional specific information on this coding. Namely, 
there is a collection ${\cal E}(Q)$ of \emph{numbered}
train tracks (train tracks with a numbering of the branches) 
whose unnumbered tracks are contained in
the set ${\cal L\cal T}(Q)$ introduced in Section \ref{components}.
The finite alphabet
for the shift $(\Omega,\sigma)$ is the set ${\cal E}(Q)$ \cite{H11}. 
Here we view a numbered train track as a graph on the surface $S$ up to 
homeomorphisms of $S$. Equivalently, a train track in ${\cal E}(Q)$ is 
a ${\rm Mod}(S)$-orbit of isotopy classes of \emph{marked} train tracks on $S$.
%

By Section 3 of \cite{H11}, the set ${\cal E}(Q)\subset
{\cal L\cal T}(Q)$ has the following properties (Lemma 3.4 and Lemma 3.8 of 
\cite{H11}, note however that in this article, we do not work with 
marked train tracks). 
\begin{enumerate}
\item  
  If $\tau\in {\cal E}(Q)$ and if $\eta\in {\cal L\cal T}(Q)$
  is obtained from $\tau$ by a \emph{full numbered split}, that is, by splitting at each large 
  branch of $\tau$ and keeping track of the numbers, 
  then $\eta\in {\cal E}(Q)$.
\item For every $q\in Q$ without vertical saddle connections 
there exists some
  $\tau\in {\cal E}(Q)$ such that $q\in Q(\tau)$.
\end{enumerate}

The \emph{shift} $\sigma$ on $\Omega$ corresponds to the transition
which alters $\tau\in {\cal E}(Q)$ by a full numbered split. 
This construction gives rise to a transition matrix 
 in the following 
sense. Number the elements of ${\cal E}(Q)$ in an arbitrary way.
Define a transition matrix $(a_{i,j})$ by the requirement that $a_{i,j}=1$ if and only 
if the numbered train track with number $j$ can be obtained from the numbered train track with 
number $i$ by a full numbered split.

Any one-sided \emph{admissible}
sequence, that is, a sequence $(x_i)_{i\geq 0}\subset {\cal E}(Q)$ 
so that $a_{i,i+1}=1$ for all $i$, defines a one-sided
infinite splitting sequence of large numbered train tracks. 
If we let ${\cal V}(x_i)$ be the set of all measured geodesic laminations carried by 
$x_i$, then this sequence determines the non-empty set $\cap_{i\geq 0} {\cal V}(x_i)$. If this 
set consists of a unique point up to scaling with a positive real whose support is
contained in ${\cal L\cal L}(Q)$ then 
$(x_i)$ is called \emph{uniquely ergodic}. Any strongly uniquely ergodic measured geodesic
lamination carried by $x_0$ with support in ${\cal L\cal L}(Q)$ determines such a sequence
(Section 4 of \cite{H24}).

The set ${\cal U}$ consists precisely 
of the set of \emph{doubly uniquely ergodic sequences} whose positive half-sequence is uniquely ergodic
and such that moreover the intersection
$\cap_i {\cal V}^*(x_{-i})$ consists of a unique point up to scaling, with support contained in
${\cal L\cal L}(Q)$.

The roof function 
$\rho$ is given as follows. By the discussion in the previous
paragraph, a sequence $(x_i)\in {\cal U}\subset \Omega$
defines up to scaling
a measured geodesic lamination $\mu$ with support in ${\cal L\cal L}(Q)$ which 
is carried by each of the train tracks $x_i$. 
We then define $\rho(x_i)= \log (\mu(x_1)^{-1} \mu(x_0))$ where 
$\mu(x_i)=\sum_{b\, \text{branch of }x_i}\mu(b)$ is the total mass deposited 
by $\mu$ on $x_i$.  
Note that the roof function
only depends on the future, and its values are contained in an interval
$(0,r_0]$ for a number $r_0>0$ (Lemma 4.2 of \cite{H11}).

As in Section 5 of \cite{H11}, we have to localize this construction near a 
typical point for $\lambda$ to construct a symbolic  coding of the Teichm\"uller flow on 
${\cal C}$ whose roof function is better behaved. 
The construction is based on Lemma 4.6 of \cite{H11}. 
For its formulation, we denote by ${\cal V}_{0}(\tau)$ the subset of 
${\cal V}(\tau)$ of all transverse measures on $\tau$ of total mass one, that is, 
measures $\mu$ with $\mu(\tau)=1$. 
Call a transverse measure \emph{positive} if it gives a positive mass to every
branch.

\begin{lemma}[Lemma 4.6 of \cite{H11}]\label{fillmore} 
Let $\tau_0\in {\cal L\cal T}(Q)$ and let 
$q\in Q(\tau_0)$ be such that the
vertical measured geodesic lamination
$\zeta$ of $q$ is uniquely ergodic, with support 
in ${\cal L\cal L}(Q)$. 
Let $(\tau_i)\subset {\cal L\cal T}(Q)$
be a full splitting sequence with
$\cap_{i\geq 0}{\cal V}(\tau_i)=(0,\infty)\zeta$. Then 
there is some $k>0$ 
with the following properties.
\begin{enumerate}
\item Via a carrying map $\tau_k\to \tau_0$, 
every vertex cycle
of $\tau_k$ is mapped onto $\tau_0$.
\item There exists a number $\beta >0$ such that
  $\nu(b)/\nu(b^\prime)\geq \beta$ for every
  $\nu\in {\cal V}(\tau_0)$ which is carried by
  $\tau_k$ and all branches $b,b^\prime$ of $\tau_0$.
\item 
There is a number $\delta>0$ with the following 
property.
Let $\mu,\nu\in {\cal V}_0(\tau_k)$ be positive normalized
measures 
and let 
$a_0=\min\{\mu(b)/\nu(b),\nu(b)/\mu(b)\mid b\}$ where
$b$ ranges through the branches of $\tau_k$.
Then the normalized measures
$\mu_0,\nu_0\in {\cal V}_0(\tau_0)$ induced by $\mu,\nu$
via a carrying map $\tau_k\to \tau_0$ and scaling satisfy 
\[\min\{\mu_0(b)/\nu_0(b),
\nu_0(b)/\mu_0(b)\mid b\}\geq a_0+\delta(1-a_0).\]
\end{enumerate}
\end{lemma}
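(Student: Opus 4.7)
The plan is to choose $k$ large enough that the cone ${\cal V}(\tau_k)$, viewed inside ${\cal V}(\tau_0)$ via the carrying map $A_k$, has collapsed tightly around the ray $\mathbb{R}_+\zeta$. Since $\zeta$ is uniquely ergodic with support in ${\cal L\cal L}(Q)$ filling $S$ and is carried by $\tau_0$, the weight $\zeta(b)$ is strictly positive on every branch $b$ of $\tau_0$; let $\hat\zeta = \zeta/\zeta(\tau_0) \in {\cal V}_0(\tau_0)$. The compact sets $K_i = \{A_i\nu/(A_i\nu)(\tau_0) : \nu \in {\cal V}(\tau_i)\setminus\{0\}\} \subset {\cal V}_0(\tau_0)$ are nested, and by unique ergodicity $\bigcap_i K_i = \{\hat\zeta\}$, so their sup-norm diameters tend to zero.

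Fix $\epsilon_0 = \tfrac12 \min_b \hat\zeta(b)$ and pick $k$ with $K_k$ contained in the sup-norm $\epsilon_0$-ball around $\hat\zeta$. Then every element of $K_k$ is strictly positive on every branch of $\tau_0$; in particular, images of vertex cycles of $\tau_k$ are positive on every branch, proving (1). The scale-invariant branch ratios of such measures are bounded below by $\beta := \epsilon_0/(\max_b\hat\zeta(b) + \epsilon_0) > 0$, proving (2).

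For (3), I enlarge $k$ further. Given positive $\mu,\nu \in {\cal V}_0(\tau_k)$, swap them if necessary so that $a_0 = \min_b \mu(b)/\nu(b)$, and decompose $\mu = a_0\nu + (1-a_0)\mu_1$ with $\mu_1 \in {\cal V}_0(\tau_k)$. Applying $A_k$ and renormalizing by $\tau_0$-total mass gives
\[
\mu_0 = t\,\nu_0 + (1-t)\,(\mu_1)_0, \qquad t = \frac{a_0\,(A_k\nu)(\tau_0)}{a_0\,(A_k\nu)(\tau_0) + (1-a_0)\,(A_k\mu_1)(\tau_0)}.
\]
Since $\nu_0, (\mu_1)_0 \in K_k$ are both sup-norm close to $\hat\zeta$, there is $\eta=\eta(k)>0$, with $\eta(k)\to 0$ as $k\to\infty$, satisfying $(1-\eta)\nu_0(b) \leq (\mu_1)_0(b) \leq (1+\eta)\nu_0(b)$ for every $b$. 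Moreover, as the simplex ${\cal V}_0(\tau_k)$ shrinks to a single point, the linear functional $\nu\mapsto (A_k\nu)(\tau_0)$ becomes nearly constant on it, so $(A_k\nu)(\tau_0)/(A_k\mu_1)(\tau_0) \in [1-\eta_2, 1+\eta_2]$ with $\eta_2(k)\to 0$. A direct calculation then yields $1 - t \leq (1-a_0)(1+\eta_2)$, and substitution into the convex combination gives
\[
\min\bigl(\mu_0(b)/\nu_0(b),\ \nu_0(b)/\mu_0(b)\bigr) \geq 1 - \eta(1-t) \geq a_0 + (1-a_0)\bigl(1 - \eta(1+\eta_2)\bigr).
\]
Choosing $k$ so that $\eta(1+\eta_2) < 1$, we obtain (3) with $\delta = 1-\eta(1+\eta_2) > 0$.

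The main obstacle is securing the uniform estimate in $a_0\in[0,1]$ in (3): one needs the full strength of ${\cal V}_0(\tau_k)$ collapsing to a singleton, not merely assertion (2), in order to force $(A_k\nu)(\tau_0)/(A_k\mu_1)(\tau_0)$ close to $1$ and thereby pin $t$ close to $a_0$ rather than merely comparable to it. Without that sharper input, the estimate degrades as $a_0 \to 1$.
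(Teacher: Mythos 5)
The paper cites this lemma from \cite{H11} without reproducing a proof, so I am evaluating your argument on its own merits. Parts (1) and (2) are correct: the nested compacta $K_i \subset {\cal V}_0(\tau_0)$ have singleton intersection $\{\hat\zeta\}$ by unique ergodicity, $\hat\zeta$ is strictly positive on every branch because the support of $\zeta$ fills and is of the same topological type as $\tau_0$, and the sup-norm diameter argument gives (1) and (2) cleanly.

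Your argument for (3) has a genuine gap at the step where you claim $(A_k\nu)(\tau_0)/(A_k\mu_1)(\tau_0) \to 1$ because ``the simplex ${\cal V}_0(\tau_k)$ shrinks to a single point.'' It does not: ${\cal V}_0(\tau_k)$ is a full-dimensional simplex of transverse measures on $\tau_k$ for every $k$. What shrinks is its \emph{normalized image} $K_k \subset {\cal V}_0(\tau_0)$, and this gives no control over the quantity you need. Concretely, $(A_k\nu)(\tau_0) = \sum_b \nu(b)\,\ell_k(b)$ where $\ell_k(b)$ is the length in $\tau_0$ of the carried image of the branch $b$ of $\tau_k$; the ratio you want controlled is governed by $\max_b \ell_k(b)/\min_b \ell_k(b)$. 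If the incidence matrix $M_k$ has the form $M_k \approx \hat\zeta\,\ell_k^{T}$ (which is exactly what $K_k \to \{\hat\zeta\}$ says), the column-sum vector $\ell_k$ is completely unconstrained: the projective collapse of the image kills precisely the scale information that the ratio $(A_k\nu)(\tau_0)/(A_k\mu_1)(\tau_0)$ measures. Tracing through your computation, what you actually need is that the $K_k$-diameter $\eta(k)$ be smaller than $\min_b\ell_k(b)/\max_b\ell_k(b)$, and nothing in your argument (or in parts (1), (2)) establishes this; as you yourself note, the estimate degrades as $a_0\to 1$ without it.

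A way to close the gap, which avoids the decomposition $\mu = a_0\nu + (1-a_0)\mu_1$ entirely, is to invoke Birkhoff's projective contraction theorem: since $K_k$ has finite Hilbert diameter $D_k$ (this is your part (2)) with $D_k\to 0$, the linear map $A_k$ contracts the Hilbert metric on ${\cal V}(\tau_0)$ by $\tanh(D_k/4)$. The quantity $-\log a_0(\mu,\nu)$ is bi-Lipschitz to the Hilbert distance (within a factor of $2$), and the estimate $1-\tilde a_0 \le (1-\delta)(1-a_0)$ then follows by choosing $k$ so that $2\tanh(D_k/4)<1$, treating the regimes $a_0\to 1$ (use the Lipschitz contraction) and $a_0\to 0$ (use that $\tilde a_0 \ge e^{-D_k}$ since $\mu_0,\nu_0 \in K_k$) separately. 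This requires no control over the column sums $\ell_k(b)$, which is exactly the information your argument cannot supply.
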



Denote by $Q_0(\tau)$ the set of all $q\in Q(\tau)$ whose vertical measured
geodesic lamination is contained in ${\cal V}_0(\tau)$. 
 As the roof function $\rho$ is bounded from above by 
$r_0>0$, for any $q\in  Q$ without vertical saddle connection 
there exists a number
$t\in [0,r_0)$ so that $\Phi^tq\in Q_0(\tau)$ for some $\tau\in {\cal E}(Q)$.

Consider a point  
$q\in {\cal C}\subset { Q}$ 
which  is generic for the ${\rm SL}(2,\mathbb{R})$-invariant measure $\lambda$, that is,
a Birkhoff regular point for the Teichm\"uller flow $\Phi^t$ on $({\cal C},\lambda)$. 
Such a point is contained 
in its own $\alpha$-and $\omega$-limit set for $\Phi^t$, and it does not 
have vertical or horizontal saddle connections. More precisely, 
by Lemma 3.7 of \cite{H11}, the horizontal and vertical measured geodesic
laminations of $q$ are uniquely ergodic, with support in ${\cal L\cal L}(Q)$.

There
are finitely many large numbered train tracks 
$\tau^1,\dots,\tau^n\in {\cal E}({\cal Q})$ 
such that $\Phi^t q\in {\cal Q}_0(\eta)$ for some
$t\in [0,r_0)$ if and only if $\eta\in \{\tau^1,\dots,\tau^n\}$.  
In particular, Lemma \ref{fillmore} can be applied to $q$ and any of the 
train tracks $\tau^i$.  We refer once more to \cite{H11} for more information.

As in Section 5 of \cite{H11}, 
we use Lemma \ref{fillmore} and the point $q$ to 
construct a topological Markov
shift on a countable set ${\cal S}$ of symbols, given
by a transition matrix $(a_{i,j})_{{\cal S}\times {\cal S}}$.
The phase space of this shift is the space 
\[\Sigma=\{(y_i)\in {\cal S}^{\mathbb{Z}}\mid
a_{y_i,y_{i+1}}=1\text{ for all }i\}.\]
The construction is as follows.

By Lemma \ref{fillmore}
there is a number $k >0$ such that
the following holds. Let
$i\leq n$ and let 
$(\sigma_j^i)_{0\leq j\leq k}$
be a full numbered splitting sequence
of length $k$ issuing from $\sigma_0^i=\tau^i$ with
the property that $\sigma_k^i$ carries the support $\zeta$
of the vertical measured geodesic lamination of $q$.
Then the sequence $(\sigma_j^i)_{0\leq j\leq k}$ 
has the properties stated in Lemma \ref{fillmore}. In particular, by part (1) of 
Lemma \ref{fillmore}, 
the image under a carrying map $\sigma_k^i\to \sigma_0^i$ of any measured
geodesic lamination $\xi\in {\cal V}(\sigma_k^i)$ defines a positive 
transverse measure on $\tau^i=\sigma_0^i$. 

We shall require a slightly stronger property. 
Denote by $\mathbb{R}_+{Q}\supset Q$ the space of differentials 
whose area one normalizations are contained in  $Q$.
We can split $\tau^i$ backwards and 
construct for each $m$ an admissible  sequence 
$(\eta_j^i)_{-m\leq j\leq m}\subset {\cal E}(Q)$ 
so that the vertical measured geodesic lamination of $q$ 
is carried by $\eta_m^i$ and the horizontal measured geodesic lamination of $q$ 
is carried by $(\eta_{-m}^i)^*$. We refer to \cite{PH92} for this construction. 
Then $\cap_{m\geq 0}{\cal V}(\eta_m^i)$ consists of the line
spanned by the vertical measured geodesic lamination of $q$, and $\cap_{m\geq 0}{\cal V}^*(\eta^i_{-m})$
equals the line spanned by the horizontal measured geodesic lamination of $q$. 

Since for any $\eta\in {\cal L\cal L}(Q)$ 
the set of pairs $(z^v,z^h)$ of measured laminations 
so that $z^v$ is carried by $\eta$, $z^h$ is carried by $\eta^*$ and such that 
$(z^v,z^h)$ determines a differential in $\mathbb{R}_+Q$ is an 
open subset of ${\cal V}(\eta)\times {\cal V}^*(\eta)$ 
(Section 3 of \cite{H24}), 
for large enough $m$ any pair $(z^v,z^h)$ so that $z^v$ is carried by $\eta_m^i$ and $z^h$ is 
carried by $(\eta_{-m}^i)^*$ defines a differential in $Q$. Thus by possibly replacing $q$ by $\Phi^{-u}q$ for 
some $u>0$ we may assume that this holds true for pairs of differentials $(\xi,\zeta)$ so that 
$\xi$ is carried by $\sigma_k^i$ and $\zeta$ is carried by $(\sigma_0^i)^*=(\tau^i)^*$. 
The next lemma formalizes this idea.



\begin{lemma}\label{period}
  Let $(x_j)_{0\leq j\leq s}$ $(s\geq k)$ 
  be an admissible sequence in ${\cal E}({\cal Q})$ so that 
there exists some $i\in \{1,\dots,n\}$ with $x_j=\sigma_j^i$ for $0\leq j\leq k$.   
 Then  for any $\mu\in {\cal V}(x_s)$ and any $\nu\in {\cal V}^*(x_0)$ 
    there exists a differential $q(\mu,\nu)$ in $\mathbb{R}_+{Q}$ with
   vertical measured lamination $\mu$ and horizontal measured
    lamination $\nu$. Moreover, the map 
    $(\mu,\nu)\in {\cal V}(x_s)\times {\cal V}^*(x_0)\to q(\mu,\nu)\in \mathbb{R}_+Q$ determines
    period coordinates on a closed $\mathbb{R}_+$-invariant 
    subset of $\mathbb{R}_+Q$ with dense interior.
\end{lemma}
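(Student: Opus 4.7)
The existence assertion is essentially immediate from the preparatory construction preceding the lemma. Since the admissible sequence $(x_j)_{0\le j\le s}$ extends $(\sigma_j^i)_{0\le j\le k}$ by further full numbered splits and a split track is always carried by the original, we have $\mathcal{V}(x_s)\subset \mathcal{V}(x_k)=\mathcal{V}(\sigma_k^i)$; moreover $\mathcal{V}^*(x_0)=\mathcal{V}^*(\tau_i)$ since $x_0=\sigma_0^i=\tau_i$. The preparatory choice of $q$ (after replacing $q$ by $\Phi^{-u}q$ for sufficiently large $u$) ensures that every pair $(\xi,\zeta)\in \mathcal{V}(\sigma_k^i)\times \mathcal{V}^*(\tau_i)$ binds $S$ and defines a differential in $\mathbb{R}_+Q$. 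Applied to an arbitrary $(\mu,\nu)\in \mathcal{V}(x_s)\times \mathcal{V}^*(x_0)$, this yields the desired $q(\mu,\nu)\in \mathbb{R}_+Q$ whose vertical and horizontal measured geodesic laminations are $\mu$ and $\nu$ by construction.

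For the second assertion, the plan is to use the standard identification of $\mathcal{V}(x_s)$ with a top-dimensional cone in $H^1(S,\Sigma;\mathbb{R})$ (parametrizing the real part of the period map) and of $\mathcal{V}^*(x_0)$ with the analogous cone for the imaginary part. Since $x_s,x_0\in \mathcal{L}\mathcal{T}(Q)$ share the topological type of $Q$, the sum of their real dimensions equals $\dim_{\mathbb{R}}H^1(S,\Sigma;\mathbb{C})=\dim_{\mathbb{R}}\mathbb{R}_+Q$. The map $(\mu,\nu)\mapsto q(\mu,\nu)$ is then the restriction of the period coordinate chart to this product cone; it is continuous and injective, with continuous inverse given by taking horizontal and vertical laminations. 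Let $A\subset \mathbb{R}_+Q$ denote its image. The $\mathbb{R}_+$-invariance of $A$ follows from the fact that scaling $\omega\mapsto r\omega$ scales both $\omega^v$ and $\omega^h$ by $r$ and both $\mathcal{V}(x_s)$ and $\mathcal{V}^*(x_0)$ are cones. Closedness of $A$ follows from continuity of the vertical and horizontal lamination maps together with closedness of both cones in measured lamination space.

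Finally, for density of the interior, I would note that the pairs $(\mu,\nu)$ with $\mu$ and $\nu$ positive on every branch form the topological interior of the closed product cone $\mathcal{V}(x_s)\times \mathcal{V}^*(x_0)$, and this interior is dense. Since the map to $A$ is a homeomorphism and the dimensions match, the image of this interior is open in $\mathbb{R}_+Q$ and coincides with the topological interior of $A$, hence is dense in $A$. The main point to verify carefully is the dimension count and the accompanying identification of the map with period coordinates, which rests on $x_s$ and $x_0$ being large and of the same topological type as $Q$; the rest is an application of the bindingness statement set up just before the lemma.
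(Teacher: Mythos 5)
Your proof follows essentially the same route as the paper's: you derive existence directly from the binding property arranged in the paragraph preceding the lemma (via $\mathcal{V}(x_s)\subset\mathcal{V}(\sigma_k^i)$ and the choice of $q$ up to replacing it by $\Phi^{-u}q$), and you identify the coordinates with the period map via the standard correspondence between train-track weight spaces and relative cohomology. The dimension count and the verification of $\mathbb{R}_+$-invariance, closedness, and dense interior are all sound.

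The one step you assert as ``standard'' is precisely what the paper takes care to construct, and I'd flag this as the place your argument is thin. You never use that $Q$ is a stratum of \emph{abelian} differentials, so the train tracks $x_j$ are oriented. Without an orientation, a weight function on a train track gives a transverse measure but not a well-defined class in $H^1(S,\Sigma;\mathbb{R})$: one needs signed intersection numbers. The paper builds the identification explicitly by choosing a homology basis represented by arcs transverse to the oriented tracks $x_0$, $x_0^*$, summing weights with signs $\sigma(p)$ determined by the orientations, and then checking that the resulting pairing descends to cohomology and that the induced linear structure on pairs $(\mu,\nu)$ coincides with the linear structure on transverse measures. Invoking the orientability (hence the orientation of $x_s$ and $x_0^*$) and this signed summation is the content of the lemma; once it is in place your dimension count and homeomorphism argument go through. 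Conversely, you supply the topological verifications (closed, $\mathbb{R}_+$-invariant, dense interior) more explicitly than the paper, which leaves those as immediate from the construction.
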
  
\begin{proof} Since by assumption, $Q$ is a component of a stratum of abelian
differentials, the train tracks $x_j$ are equipped with an orientation.
This orientation induces an orientation for any geodesic lamination  
carried by $x_j$, and there is similarly an orientation for any geodesic 
lamination carried by $x_j^*$. 

 It follows from the construction of the sequences $(\sigma_j^i)_{0\leq j\leq k}$ 
  that the following
  holds true. Let $\mu$ be a measured geodesic lamination carried by
  $x_s$ and let $\nu$ be a measured geodesic lamination carried by
  $x_0^*$. Then the pair $(\mu,\nu)$ determines a differential 
  $q(\mu,\nu)\in \mathbb{R}_+Q$.

  Period coordinates are defined as follows. Choose 
  a basis of the relative homology $H_1(S,\Sigma;\mathbb{Z})$ where $\Sigma$ is the set of  
  singular points of $q(\mu,\nu)$. We may assume that each point of 
  $\Sigma$ is contained in the interior of a polygonal component of $S\setminus x_0^*$ which is 
  a subset of a polygonal component of $S\setminus x_0$ (see \cite{PH92} for details). 
  We may moreover assume that 
  each such basis element can be represented by
  a smooth embedded arc in $S$ with endpoints in $\Sigma$. Furthermore, these arcs
  can be chosen in such a way that they intersect the oriented train tracks
  $x_0$ and $x_0^*$ transversely in interior points of branches. 
  We then can evaluate the 
  transverse measures defined by
  $\mu,\nu$ on this collection of oriented paths as follows.
  
  Let $\alpha:[0,1]\to S$ be a smooth embedding defining an element of the fixed homology basis. 
  To each intersection $p$ of $\alpha$ with the oriented train track $x_0$ is associated a sign
  $\sigma(p)$ which is equal to one if 
  the orientation of $T_pS$ determined by the ordered pair of vectors 
  consisting of the oriented tangent of $\alpha$ at $p$ and the oriented tangent of $x_0$ at $p$
  coincides with a fixed orientation of $S$, and it is $-1$ otherwise. 
  Weight each such intersection point $p\in b$, $b$ a branch of $x_0$, by $\sigma(p)\mu(b)$ 
  and put $q^v(\alpha)=\sum_p \sigma(p)\mu(p)$. Proceed in the same way with the 
  intersections between $\alpha$ and $x_0^*$ to define $q^h(\alpha)$. 
  
  Since the train track $x_0$ is consistently oriented and the transverse measure $\mu$ satisfies
  the switch conditions, this construction does not depend on the choice of the arc $\alpha$ 
  in its homology class and hence 
  and it defines a pair of classes 
  in $H^1(S,\Sigma;\mathbb{R})$ which correspond to the imaginary and real parts of the 
  holomorphic one-form 
  $q(\mu,\nu)$. Namely, via the correspondence between oriented measured
  geodesic laminations on $S$ and oriented measured foliations, the value of $q^v(\alpha)$ defined
  above is just the value of the integral over $\alpha$ of the closed real one-form which vanishes on 
  the leaves of the measured foliation corresponding to $\mu$ and which is normalized as follows. 
  Its value on an arc connecting
  two points in $\Sigma$  which up to homotopy intersects a given branch $b$ of $\tau$ 
  transversely in a single point and does not intersect any other branch 
  equals $\pm \mu(b)$, with the sign depending on the choices of orientations.

  As a consequence, the sequence 
  $(x_j)_{0\leq j\leq s}$ determines a domain for period coordinates on $\mathbb{R}_+Q$.
  Furthermore, it is immediate from the construction that 
  the linear structure on the pairs $(\mu,\nu)$ defined by these period coordinates coincides
  with the linear structure as weight functions on the oriented train track $x_s$  and
  $x_0^*$, respectively. 
  This simply means that the imaginary part of the period coordinates for 
  $q(\mu_1+\mu_2,\nu)$ where $\mu_i$ are viewed as transverse measures on $x_s$, 
  coincides with the sum of the imaginary parts of the period coordinates for $q(\mu_1,\nu)$ and $q(\mu_2,\nu)$, 
  and naturality with
  respect to scaling also holds true. The same is also valid for the real parts of the coordinates.
  
That the domain for the period coordinates obtained in this way is a closed $\mathbb{R}_+$-invariant
subset of $\mathbb{R}_+Q$ is immediate from the construction and the fact  that
the domain of these coordinates is a closed subset of $Q$ with dense interior as
established in Section 3 of \cite{H24}.
\end{proof}

The affine invariant manifold ${\cal C}$ is a locally closed subset of $Q$ 
cut out from $Q$ by linear equations in period coordinates with real coefficients. 
By Lemma \ref{period},  
any of the sequences $(\sigma_j^i)_{0\leq j\leq k}\subset {\cal E}(Q)$ 
and the choice of a basis of
$H_1(S,\Sigma;\mathbb{Z})$ determines 
period coordinates on a neighborhood in ${\cal Q}$ of the Birkhoff regular point $q\in {\cal C}$ which
was used for the construction.
Although ${\cal C}$ is a closed subset of $Q$, 
the domain $X$ of these period coordinates may intersect ${\cal C}$ in more than one connected component. 

To overcome this difficulty we proceed as follows. By the definition of an affine invariant manifold
and the discussion on period coordinates in the proof of Lemma \ref{period}, 
there is a linear mapping $\psi^v:{\cal V}(\sigma_k^i)\to \mathbb{R}^s$ for some $s\geq 0$ 
and a linear mapping $\psi^h:{\cal V}^*(\sigma_0^i)\to \mathbb{R}^u$ for some $u\geq 0$ 
such that 
the component of ${\cal C}\cap X$ containing $q$ is contained in the set
\[Z=\{(q^v+ \xi,q^h+\zeta)\in X\mid \psi^v(\xi)=\psi^h(\zeta)=0\}.\]
Furthermore, there is a contractible neighborhood $W^v$ of 
$q^v$ in ${\cal V}(\sigma_k^i)$ and a contractible neighborhood $W^h$ of $q^h$ in 
${\cal V}^*(\sigma_0^i)$ such that 
$(W^v\times W^h)\cap Z$ is a connected neighborhood of 
$q$ in ${\cal C}$. 
This implies that by replacing the defining admissible 
sequences $(\sigma_\ell^i)_{0\leq \ell \leq k}\subset {\cal E}(Q)$  $(i=1,\dots,n)$
by longer sequences, we may assume that the subset of $Q$ which is 
determined by these sequences is contained in the neighborhood 
$W^v\times W^h$ of $q$ in $Q$ (via identifying a differential with its period coordinates). 

As a consequence, we may assume that for each $i$ 
there exists a connected affine cone
$A(\sigma_k^i)\subset {\cal V}(\sigma_k^i)$ and a connected affine cone
$A^*(\sigma_0^i)\subset {\cal V}^*(\sigma_0^i)$ so that the differentials
  in $\mathbb{R}_+{\cal C}$ intersecting
  the domain of the period coordinates defined by the sequence $(\sigma_j^i)_{0\leq j\leq k}$  
  are precisely those with vertical measured geodesic lamination contained in
  the set $A(\sigma_k^i)$ and horizontal measured geodesic lamination contained in
  the set $A^*(\sigma_0^i)$. More precisely, $A(\sigma_k^i)$ is the intersection with ${\cal V}(\sigma_k^i)$ of 
  an affine subspace of the vector space of all weight functions on the branches of 
  $\sigma_k^i$ which satisfy the switch condition, and similarly for $A^*(\sigma_0^i)$.

\begin{definition}\label{alphabet}
Define ${\cal S}$ to be the set of all
finite admissible sequences $(x_j)_{0\leq j\leq s}\subset {\cal E}(Q)$ 
with
the following additional properties.
\begin{enumerate}
\item $s\geq 2k$ and there are $i,\ell\in \{1,\dots,n\}$ so that 
the sequences $(x_j)_{0\leq j\leq k}$ and 
$(x_j)_{s-k\leq j\leq s}$ are realized by 
the full numbered splitting sequences 
$(\sigma_j^i)_{0\leq j\leq k}$ and $(\sigma_j^\ell)_{0\leq j\leq k}$, respectively. 
\item There is no number
$t\in [k,s-k)$ such that the sequence $(x_j)_{t\leq j\leq t+k}$ is 
realized by any of the full numbered splitting sequences $(\sigma_j^i)_{0\leq j\leq k}$.
\item There exists some $z\in {\cal C}$ whose vertical measured geodesic
lamination is carried by $x_s$ 
and whose 
horizontal measured geodesic lamination is carried by 
$x_0^*$. 
\end{enumerate}
\end{definition}

By the choice of the sequences $(\sigma_j^i)_{0\leq j\leq k}$ 
the third property means the following. Consider 
a \emph{marked} train track $\tau_0$ representing $x_0$. By assumption, there exists a lift
$\tilde q$ of $q$ to the Teichm\"uller space of marked abelian differentials
so that $\tilde q\in Q(\tau_0)$ (with a straightforward extension of notations). 
Let $\tilde {\cal C}$ be the component of the preimage
of ${\cal C}$ in the Teichm\"uller space of marked abelian differentials containing 
$\tilde q$. Let $\tau_s$
be the endpoint of the splitting sequence of marked numbered train tracks which is 
represented by the sequence $(x_i)_{0\leq i\leq s}$. Then there exists some 
$z\in \tilde {\cal C}\cap Q(\tau_0)\cap Q(\tau_s)$. 

Note that ${\cal S}$ is at most  countable. Furthermore. ${\cal S}$ is not empty as by
recurrence, the $\Phi^t$-orbit of the Birkhoff regular differential $q$ recurs for arbitrarily large 
times to an arbitrarily prescribed neighborhood of $q$, see \cite{H11} for details.



Define a transition matrix $(a_{i,j})_{{\cal S}\times {\cal S}}$ by requiring that $a_{i,j}=1$ 
if and only if the sequence $(x_p)_{0\leq p\leq s}$ representing the symbol $i$ and the 
sequence $(y_t)_{0\leq t\leq u}$ representing the symbol $j$ satisfy $y_t=x_{s-k+t}$ for every
$t\in \{0,\dots,k\}$ and that furthermore the following holds true. 
Consider the affine subcone $A(x_s)$ of ${\cal V}(x_s)$ which is the set of 
horizontal measured geodesic laminations of points in ${\cal C}$ cut out 
by linear equations from the period coordinates
defined by the sequence $(x_p)_{0\leq p\leq s}$. Using the period coordinates
determined by $(y_t)_{0\leq t\leq u}$, we require that via the carrying map 
$y_u\to y_0=x_s$, we have $A(y_u)\subset A(x_s)$ and similarly we require that 
$A(x_0^*)\subset A(y_u^*)$.

Let $\Sigma$ be the set of all biinfinite sequences
$(y_i)\subset {\cal S}^{\mathbb{Z}}$ 
with $a_{y_i,y_{i+1}}=1$ for all $i$,
equipped with the (biinfinite) shift $T:\Sigma\to \Sigma$.
There is a natural continuous
injective map 
\[G:\Sigma\to \Omega\] 
where $\Omega$ is as in Theorem \ref{coding}.
This map is constructed as follows. Note that two letters $x,y\in {\cal S}$ are
given by finite admissible sequences in ${\cal E}(Q)$, say the sequences
$(x_j)_{0\leq j\leq s}$ and $(y_\ell)_{0\leq \ell \leq u}$. By construction, 
if $a_{x,y}=1$ then we have
$y_\ell=x_{s-k+\ell}$ for $0\leq \ell \leq k$. Thus the concatenation $x \cdot y$ of the
sequence $(x_p)_{0\leq p\leq s}$ and the sequence $(y_\ell)_{k+1\leq \ell \leq u}$ 
defines an admissible sequence of length $s+u-k$ in ${\cal E}(Q)$. 
With this recipe, we can construct from any 
biinfinite sequence $(y_i)\subset \Sigma$ an admissible sequence 
$G(y_i)\in \Omega$.

\begin{lemma}\label{cdouble}
The image of $G$ is contained in the set 
of doubly uniquely ergodic sequences which define points in ${\cal C}$. 
\end{lemma}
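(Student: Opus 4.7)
The plan is to break the statement into three assertions for any $(y_m)_{m\in\mathbb{Z}}\in\Sigma$ with image $(z_j)_{j\in\mathbb{Z}}=G((y_m))\in\Omega$: forward unique ergodicity of the cones ${\cal V}(z_j)$, the symmetric unique ergodicity for the dual cones ${\cal V}^*(z_{-j})$, and the assertion that the resulting pair of measured laminations parametrizes a point of ${\cal C}$. The key structural observation, immediate from Definition \ref{alphabet}, is that each letter $y_m\in{\cal S}$ begins with one of the blocks $(\sigma_j^{i(m)})_{0\leq j\leq k}$; hence $(z_j)$ contains positions $N_m$ with $N_m\to\pm\infty$ as $m\to\pm\infty$ such that $(z_{N_m+j})_{0\leq j\leq k}=(\sigma_j^{i(m)})_{0\leq j\leq k}$. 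This furnishes infinitely many blocks to which Lemma \ref{fillmore} may be applied, both in the positive and negative direction.

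For the forward claim, I would use part (3) of Lemma \ref{fillmore} as a projective contraction: for any two positive normalized measures $\mu,\nu\in{\cal V}_0(z_{N_m+k})$ the quantity $a=\min\{\mu(b)/\nu(b),\nu(b)/\mu(b)\}$ satisfies $a\mapsto a+\delta(1-a)$ under the carrying map to ${\cal V}_0(\tau_{i(m)})={\cal V}_0(z_{N_m})$. Iterating this over $m=1,2,\dots$ drives the projective diameter of ${\cal V}(z_j)$, viewed inside ${\cal V}_0(z_0)$, to zero, so that $\bigcap_{j\geq 0}{\cal V}(z_j)=\mathbb{R}_+\mu_+$ for a unique measure $\mu_+$. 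Part (2) of Lemma \ref{fillmore} then gives a uniform positive lower bound on the relative branch weights of $\mu_+$ on each $\tau_{i(m)}$, so $\mu_+$ is positive on every branch of infinitely many recurring tracks in ${\cal E}(Q)$. Because $(z_j)$ is an infinite full splitting sequence in ${\cal L\cal T}(Q)$, the splitting-sequence characterization of ${\cal L\cal L}(Q)$ from Section 4 of \cite{H24} identifies the support of $\mu_+$ as a minimal large geodesic lamination of the same topological type as each $z_j$, so $\mu_+$ has support in ${\cal L\cal L}(Q)$. The entirely symmetric dual-bigon-track argument applied to ${\cal V}^*(z_{-j})$ yields $\bigcap_{j\geq 0}{\cal V}^*(z_{-j})=\mathbb{R}_+\mu_-$ with support in ${\cal L\cal L}(Q)$, which shows $G((y_m))\in{\cal U}$.

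To verify that the corresponding differential lies in ${\cal C}$, I would invoke the extra clause built into the transition matrix: whenever $a_{x,y}=1$ one has $A(y_u)\subset A(x_s)$ via the carrying map $y_u\to y_0=x_s$, and dually $A(x_0^*)\subset A(y_u^*)$. Unwinding this along the sequence $(y_m)$, the closed affine cones $A(\sigma_k^{i(m)})$ form a nested family whose intersection, identified via carrying maps, must contain the ray $[\mu_+]$; analogously $[\mu_-]$ lies in the nested intersection of the dual cones $A((\sigma_0^{i(-m)})^*)$. By Lemma \ref{period} the pair $(\mu_+,\mu_-)$ determines a differential $q(\mu_+,\mu_-)\in\mathbb{R}_+Q$, and since by construction the cones $A(\cdot)$ encode precisely the linear equations in period coordinates that cut out ${\cal C}$ in the relevant chart, this differential lies in $\mathbb{R}_+{\cal C}$. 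Rescaling to unit area gives the required point of ${\cal C}$.

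The main obstacle I anticipate is in the second paragraph, namely identifying the support of the limit $\mu_+$ as an element of ${\cal L\cal L}(Q)$ rather than merely as some measured lamination carried by each $z_j$. The contraction argument alone produces a unique ray together with positivity on each recurring track, but ruling out a degenerate limit whose support is a proper sublamination requires the fully-recurrent/transversely-recurrent structure and the splitting-sequence description of ${\cal L\cal L}(Q)$ developed in \cite{H24}. Once this topological identification is secured, the rest of the argument is a routine combination of projective contraction, nesting of the affine cones $A(\cdot)$, and an appeal to Lemma \ref{period}.
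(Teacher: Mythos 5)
Your proposal is correct and follows essentially the same skeleton as the paper's proof, but fills in details differently at both steps. For unique ergodicity, you unwind the projective contraction of Lemma \ref{fillmore}(3) explicitly, iterating over the recurring blocks $(\sigma_j^{i(m)})_{0\leq j\leq k}$ to shrink the projective diameter of $\bigcap_j{\cal V}(z_j)$ to zero; the paper states the conclusion and defers the iteration to \cite{H11}. For membership in ${\cal C}$, you invoke the cone-nesting requirement $A(y_u)\subset A(x_s)$, $A(x_0^*)\subset A(y_u^*)$ built into the transition matrix together with Lemma \ref{period}, arguing that $[\mu_+]$ and $[\mu_-]$ survive in the nested intersection of the affine cones cutting out ${\cal C}$. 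The paper instead uses Definition \ref{alphabet}(3) directly: for each $j$ there is some $z\in{\cal C}$ whose vertical lamination is carried by $y_j$ and whose horizontal lamination is carried by $y_{-j}^*$, and then closedness of ${\cal C}$ plus compactness of the relevant sets forces the limit $q(\mu,\nu)$ into ${\cal C}$. Both arguments are valid; yours makes the linear/affine structure explicit, while the paper's is a touch more economical since it needs no Lemma \ref{period}. On the subtle point you flag yourself --- identifying the support of $\mu_+$ as an element of ${\cal L\cal L}(Q)$ --- the paper's resolution is to cite the fact (Lemma 3.7 of \cite{H11}) that differentials recurring to a fixed compact set for arbitrarily large positive and negative times have uniquely ergodic vertical and horizontal laminations with support in ${\cal L\cal L}(Q)$; this is close in spirit to, but not literally, the splitting-sequence characterization from Section 4 of \cite{H24} that you invoke, and either route works once the recurrence/positivity structure from parts (1)--(2) of Lemma \ref{fillmore} is in place.
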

\begin{proof}
By Lemma \ref{fillmore} and the properties of the alphabet ${\cal S}$, if $(\tau_i)$ is a full splitting 
sequence which realizes $(y_i)\in \Sigma$ then $\cap_{i\geq 0}{\cal V}(\tau_i)
\cap {\cal V}_0(\tau_0)$ consists of a unique positive transverse measure $\mu$, 
and there is a unique positive transverse measure $\nu$ contained in 
$\cap_{i\geq 0}{\cal V}^*(\tau_{-i})$ with intersection 
$\iota(\mu,\nu)=1$.  We refer once more to \cite{H11} for details of this construction which relies
on the fact that differentials in $Q$ whose orbits under $\Phi^t$ recur to a fixed compact set
of arbitrarily large positive and negative times have uniquely ergodic vertical and horizontal 
measured laminations, with support in ${\cal L\cal L}(Q)$.

That the differential $q(\mu,\nu)$ determined by the 
pair $(\mu,\nu)$ of measured geodesic laminations indeed is contained 
in ${\cal C}$ can be seen as follows. 
By construction, for any sequence 
$(y_i)\in \Sigma$, identified with a full numbered splitting sequence of train tracks in 
${\cal L\cal T}(Q)$, 
and for any $j\geq 0$, there exists some point $z\in {\cal C}$ whose vertical
measured geodesic lamination is carried by 
${\cal V}(y_j)$ and whose horizontal measured geodesic lamination is carried 
by ${\cal V}^*(y_{-j})$. Since ${\cal C}$ is a closed subset of the locally compact space $Q$, 
the infinite intersection $\cap_i {\cal V}(\tau_i) \times \cap_i {\cal V}^*(\tau_{-i})$  
(as a set in period coordinates) also intersects ${\cal C}$.
But this is equivalent to 
stating that $q(\mu,\nu)\in {\cal C}$. 
\end{proof}

It follows from the results in 
Section 5 of \cite{H11} that we may
assume that the topological Markov chain
$(\Sigma,T)$ is topologically mixing.

Define a roof function $\phi$ on $\Sigma$ by associating
to an infinite sequence $(y_i)\in \Sigma$ 
with 
$y_0=(x_i)_{0\leq i\leq s}$
the value 
\begin{equation}\label{phi}
\phi(y_i)=\sum_{i=0}^{s-k}\rho(\sigma^i(G(y_i))).\end{equation}
By Lemma \ref{fillmore},
the function $\phi$ is bounded from below by a 
positive constant, is
unbounded and only depends on the future. We refer once more 
to Section 5 of \cite{H11} for 
more detailed information on this construction.

\begin{lemma}\label{flowonc}
Let $(Y,\Theta^t)$ be the suspension of the Markov shift
$(\Sigma,T)$ by the roof function $\phi$. Then there exists a finite-to-one
semi-conjugacy of $(Y,\Theta^t)$ onto a $\Phi^t$-invariant Borel subset of 
$({\cal C},\Phi^t)$ of full $\lambda$-measure.
\end{lemma}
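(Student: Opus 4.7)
The plan is to build $\Xi_{\cal C}$ as a composition: an embedding of the suspension $(Y,\Theta^t)$ into the suspension $(X,\Theta^t)$ of Theorem~\ref{coding}, followed by the semi-conjugacy $\Xi:(X,\Theta^t)\to ({\cal Q},\Phi^t)$ provided by that theorem. On the base shifts I use the injective continuous map $G:\Sigma\to\Omega$ constructed before Lemma~\ref{cdouble}. The roof function $\phi$ on $\Sigma$ was defined precisely as the block sum of $\rho$ along $G((y_i))$ corresponding to a single $T$-step in $\Sigma$; accordingly, the formula $[(y_i),t]\mapsto [G((y_i)),t]$ defines a continuous injective map $\tilde G:Y\to X$ intertwining the two suspension flows $\Theta^t$. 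Setting $\Xi_{\cal C}=\Xi\circ\tilde G$ yields a continuous map $Y\to {\cal Q}$ with $\Xi_{\cal C}\circ\Theta^t=\Phi^t\circ \Xi_{\cal C}$. By Lemma~\ref{cdouble} its image lies in ${\cal C}$, and it is a $\Phi^t$-invariant Borel subset of ${\cal Q}$. Finite-to-oneness is inherited from $\Xi$ via the injectivity of $\tilde G$.

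The substantive step is to show $\lambda(\Xi_{\cal C}(Y))=1$. I will argue that every Birkhoff regular point $q'\in{\cal C}$ for $(\Phi^t,\lambda)$ lies in $\Xi_{\cal C}(Y)$ after a small time shift, which suffices as the Birkhoff regular set is $\Phi^t$-invariant and of full $\lambda$-measure. By Lemma 3.7 of \cite{H11}, the vertical and horizontal measured geodesic laminations of $q'$ are uniquely ergodic with support in ${\cal L\cal L}(Q)$, so $q'$ admits a biinfinite coding sequence $(\xi_p)_{p\in\mathbb Z}\in{\cal U}$ in the sense of Theorem~\ref{coding}. Poincar\'e recurrence for $({\cal C},\Phi^t,\lambda)$ yields a biinfinite sequence of return times $(t_j)$ with $t_j\to\pm\infty$ at which $\Phi^{t_j}q'\in \bigcup_{i=1}^n{\cal Q}_0(\tau_i)$. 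Cutting $(\xi_p)$ at the positions corresponding to consecutive $t_j$'s yields blocks $y_j$, forming a candidate sequence in $\Sigma$, and one recovers $q'=\Xi_{\cal C}([(y_j),t'])$ for an appropriate fiber coordinate $t'$. Conditions (1) and (2) of Definition~\ref{alphabet} hold by the choice of $\tau_1,\dots,\tau_n$ and the consecutiveness of the return times, and condition (3) holds because $\Phi^{t_j}q'\in{\cal C}$ realizes the block.

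The main obstacle is verifying the transition rule of $(\Sigma,T)$, namely the affine cone inclusions defining the transition matrix $(a_{y_j,y_{j+1}})$. These cones are cut out from the relevant spaces of transverse measures by the linear equations defining ${\cal C}$ in period coordinates, and their nestedness under the carrying maps identifying the last track of one block with the first track of the next follows from the linearity and naturality of the period coordinate identification established in Lemma~\ref{period}: both blocks in any transition describe period coordinates centered at points on the same orbit $\Phi^tq'\subset{\cal C}$, so the defining linear equations agree and the required inclusions are automatic. Routine measurability considerations, together with the finite-to-one property of $\Xi$, then show that $\Xi_{\cal C}(Y)$ is a Borel set of full $\lambda$-measure on which $\Xi_{\cal C}$ is a finite-to-one semi-conjugacy.
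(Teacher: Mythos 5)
The first part of your proposal---building $\Xi_{\cal C}=\Xi\circ\tilde G$ from the embedding $\tilde G:Y\to X$ induced by $G$, landing in ${\cal C}$ via Lemma~\ref{cdouble}, and inheriting finite-to-oneness from $\Xi$---is correct and matches the paper's construction in substance (the paper phrases it as viewing $(\Sigma,T)$ as an invariant sub-shift of the coding in Section 5 of \cite{H11}). The genuine divergence is in the full-measure step. The paper argues indirectly: the image contains a subset of ${\cal C}$ of positive $\lambda$-measure, and invariance plus ergodicity of $\lambda$ then forces full measure. You instead try the stronger, direct claim that every Birkhoff regular point of $({\cal C},\Phi^t,\lambda)$ lies in the image. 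That route could work, but as written it has a gap.

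The gap is the recurrence set. Returns of $\Phi^{t_j}q'$ to $\bigcup_{i}{\cal Q}_0(\tau_i)$ only guarantee that the coding of $q'$ reads $\tau_i=\sigma_0^i$ at those positions. Conditions (1) and (2) of Definition~\ref{alphabet}, however, require the coding to pass through the \emph{entire} length-$k$ blocks $(\sigma_j^i)_{0\le j\le k}$, which were fixed by demanding that $\sigma_k^i$ carry the vertical lamination of the reference point $q$. Whether the coding of $q'$ continues along $(\sigma_j^i)$ after position $t_j$ depends on whether the vertical lamination of $\Phi^{t_j}q'$ is carried by $\sigma_k^i$, i.e.\ whether $\Phi^{t_j}q'$ lies in the strictly smaller set $Z_i\subset{\cal Q}_0(\tau_i)$ of differentials whose vertical lamination is carried by $\sigma_k^i$ (with an analogous condition on the horizontal lamination for the backward direction). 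To conclude that a $\lambda$-Birkhoff regular point visits $\bigcup_i Z_i$ infinitely often forward and backward you must appeal to the fact that this set has positive $\lambda$-measure---which is precisely the ergodicity/positive-measure input the paper uses directly, and which your Poincar\'e recurrence statement does not supply. A secondary imprecision: the affine-cone inclusions in the transition matrix track a particular local connected component of ${\cal C}$ in the period-coordinate domain (the one through $q$), and ``being on the same orbit'' only forces the required nesting if consecutive coding positions stay within that component; this should be argued, not asserted.
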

\begin{proof}
It follows from the results in Section 5 of \cite{H11} that there exists a finite-to-one
semi-conjugacy $H$ of $(Y,\Theta^t)$ into the Teichm\"uller flow on $Q$ whose image
is contained in the set of all orbits which recur to a fixed compact neighborhood 
of $q$ for arbitrarily large times. Observe to this end that the alphabet ${\cal S}$ is a subset
of the alphabet constructed in Section 5 of \cite{H11}. Thus the shift $(\Sigma,T)$ 
is an invariant subset of the shift space constructed in Section 5 of \cite{H11}. 

By Lemma \ref{cdouble}, the image of the map $H$ is a Borel subset 
of the affine 
invariant manifold ${\cal C}$. Moreover, as the domain of the map $H$ is invariant
under the suspension flow $\Theta^t$, the image of $H$ is invariant under the Teichm\"uller flow.
By construction, it contains a subset of 
${\cal C}$ of positive $\lambda$-measure. By invariance and ergodicity of $\lambda$, it follows 
that the image contains a subset of full $\lambda$-measure. This completes the proof of the lemma. 
\end{proof}

Define the $n$-th variation of $\phi$ by
\[{\rm var}_n(\phi)=\sup\{\phi(y)-\phi(z)\mid
y_i=z_i\text{ for }i=0,\dots,n-1\}.\]
The following is Lemma 5.2 of \cite{H11}. Although Lemma 5.2 uses a variation of the construction of 
the suspension flow $(\Sigma,T)$ where the definition of the alphabet ${\cal S}$ is less restrictive, 
its proof only uses Lemma \ref{fillmore} and hence it carries over without change.

\begin{lemma}\label{variation}
There is a number $\theta\in (0,1)$ and a number
$L>0$ such that
${\rm var}_n(\phi)\leq L\theta^n$ for all $n\geq 1$. In particular,
\[\sum_{n\geq 1}{\rm var}_n(\phi)<\infty.\] 
\end{lemma}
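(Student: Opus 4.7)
The plan is to combine the telescoping structure of $\phi$ with an iterative application of Lemma \ref{fillmore}(3).

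First I would collapse the definition of $\phi$. Since $\rho((x_j)_{j\geq 0}) = \log\mu(x_0) - \log\mu(x_1)$ with $\mu$ the unique (up to scale) element of $\cap_{j\geq 0}{\cal V}(x_j)$, the defining sum for $\phi$ telescopes: for $y \in \Sigma$ with $y_0 = (x_j)_{0 \leq j \leq s}$ and $\xi_j = (G(y))_j$,
\[
\phi(y) \;=\; \log \mu_y(\xi_0) - \log \mu_y(\xi_{s-k+1}).
\]
If $y,z \in \Sigma$ agree in coordinates $0,\dots,n-1$, then in particular $y_0 = z_0$, so the indices $0,\dots,s$ appearing in the telescoped formula are identified for $y$ and $z$. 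Normalizing $\mu_y, \mu_z$ to lie in ${\cal V}_0(\xi_0)$, the leading $\log$-terms cancel and
\[
\phi(y) - \phi(z) \;=\; \log\bigl(\mu_z(\xi_{s-k+1})/\mu_y(\xi_{s-k+1})\bigr).
\]

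The main step, and the central technical obstacle, is to bound this log-ratio by $L\theta^n$. By Definition \ref{alphabet}, the shared prefix of $y$ and $z$ of $n$ common symbols contains at least $n$ disjoint blocks of $k$ consecutive splits, each exactly realizing one of the distinguished sequences $(\sigma_j^i)_{0\leq j\leq k}$ to which Lemma \ref{fillmore}(3) applies. I would iterate this lemma backward through these $n$ blocks, starting from $\mu_y, \mu_z$ normalized in ${\cal V}_0$ at the last shared train track $\xi_{P_n}$. Writing $a_m$ for the min-ratio $\min_b\{\mu_y(b)/\mu_z(b),\mu_z(b)/\mu_y(b)\}$ after the $m$-th block, on the suitably renormalized measures at the current train track, part (3) of the lemma gives the affine recursion $a_{m+1} \geq a_m + \delta(1-a_m)$, hence by induction
\[
a_n \;\geq\; 1 - (1-\delta)^n.
\]
On intermediate train tracks between successive sigma-blocks, where the lemma does not apply directly, the min-ratio cannot worsen, because the weight transition from a split track back to its refinement is a nonnegative linear map followed by a positive rescaling, and such maps are non-expansive on min-ratios.

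Finally, $\mu(\xi_{s-k+1})$ is itself a nonnegative linear functional of the branch weights of $\mu$ on $\xi_0$, so the ratio $\mu_y(\xi_{s-k+1})/\mu_z(\xi_{s-k+1})$ also lies in $[a_n, 1/a_n]$. Thus
\[
|\phi(y) - \phi(z)| \;\leq\; -\log(1-(1-\delta)^n) \;\leq\; L(1-\delta)^n
\]
for an appropriate $L>0$, giving the lemma with $\theta = 1-\delta$; the final summability is then immediate. The principal bookkeeping difficulty I expect is to track the renormalizations across the $n$ iterations and to verify that $n$ honest sigma-blocks are indeed present in the shared prefix, which follows from the fact that every symbol has length at least $2k+1$ and both starts and ends with one of the distinguished sequences $(\sigma_j^i)$.
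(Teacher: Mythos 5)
The telescoping of $\phi$ and the overall strategy (iterate a contraction lemma through the sigma-blocks embedded in the shared prefix) match what the paper invokes via Lemma~5.2 of \cite{H11}; the paper itself only cites that result and notes that its proof uses Lemma~\ref{fillmore}, so you are doing more work than the text does. However, there is a concrete gap in your key step.

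You claim that on the intermediate train tracks between successive sigma-blocks ``the min-ratio cannot worsen, because the weight transition \dots\ is a nonnegative linear map followed by a positive rescaling, and such maps are non-expansive on min-ratios.'' This is false for the \emph{renormalized} min-ratio $a=\min_b\{\mu(b)/\nu(b),\nu(b)/\mu(b)\}$ on ${\cal V}_0$. A nonnegative linear carrying map indeed shrinks the interval $[\min_b\mu(b)/\nu(b),\max_b\mu(b)/\nu(b)]$, but the rescaling that brings each measure separately back to total mass one multiplies all the ratios $\mu(b)/\nu(b)$ by a factor $c=\nu'(\tau)/\mu'(\tau)$, and $c$ can sit anywhere in that interval. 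In the worst case this sends $a$ to $A/B$, which can be as small as $a^2$. (Concretely, with two branches, $\mu=(\tfrac1{100},\tfrac{99}{100})$, $\nu=(\tfrac{99}{100},\tfrac1{100})$ and a carrying map with a large multiplicity on the second branch already degrade $a$ by a large factor after renormalization.) So the recursion $a_{m+1}\ge a_m+\delta(1-a_m)$ does not follow, because between two applications of Lemma~\ref{fillmore}(3) you may lose a factor $(1+a_m)$, and $2(1-\delta)<1$ is not guaranteed.

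The repair is to track a scale-invariant quantity instead of the renormalized min-ratio. With $A_j=\min_b\mu_y(b)/\mu_z(b)$ and $B_j=\max_b\mu_y(b)/\mu_z(b)$ at $\xi_j$ (no renormalization of $\mu_y,\mu_z$ at all), the Hilbert-type quantity $d_j=\log(B_j/A_j)$ is invariant under independent rescalings of $\mu_y,\mu_z$ and is non-increasing under every carrying map, so it cannot worsen between sigma-blocks. Through a sigma-block it contracts by a uniform factor: either deduce this from Lemma~\ref{fillmore}(2) and Birkhoff's theorem on cones (the image cone has finite Hilbert diameter since branch ratios are $\ge\beta$), or convert Lemma~\ref{fillmore}(3) into a contraction statement as in the proof of Lemma~\ref{smoothcontrol} (where $u=\Vert\alpha\Vert_{\rm sup}$, $\tilde u/u\le 1-\delta$). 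Having normalized $\mu_y,\mu_z$ at $\xi_0$, one has $A_0\le 1\le B_0$ and the carried total-mass ratio $\mu_y(\xi_{s-k+1})/\mu_z(\xi_{s-k+1})$ lies in $[A_{s-k+1},B_{s-k+1}]\subset[A_0,B_0]$, so $|\phi(y)-\phi(z)|\le d_0\le D\theta^{\,n-1}$, which gives the lemma. With this replacement your argument goes through; the telescoping, the count of sigma-blocks, and the final estimate are all correct.
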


\section{One-sided Markov shifts}\label{sec:ex}

In this section we construct from the coding of the Teichm\"uller flow on the affine invariant manifold
${\cal C}$ established in Section \ref{sec:shift} 
a one-sided Markov shift $(\Xi^+,\beth)$, and we show that it defines an 
expanding Markov map in the sense of Definition 2.2 of \cite{AGY06}. 
The Teichm\"uller flow on ${\cal C}$ determines 
a roof function $\omega$ 
for the shift. We shall see in Section \ref{sec:control} that this roof function 
satisfies the conditions formulated in Section 2 of \cite{AGY06}.
As in \cite{AGY06}, this is then used in Section \ref{sec:exp} to complete the proof of the main
result from the introduction.

Recall from Section \ref{sec:shift}  the 
definition of the two-sided Markov shift
$(\Sigma,T)$ 
with countable alphabet ${\cal S}$ and transition
matrix $(a_{i,j})$. 
%
%
%
The alphabet ${\cal S}$ was constructed from a collection of finite
admissible sequences $(\sigma_j^i)_{0\leq j\leq k}\subset {\cal E}(Q)$ $(i=1,\dots,n)$. 
To facilitate the application of a technical result \cite{Aa97}
which guarantees exponential mixing as
in \cite{AGY06}, we merge letters from the alphabet ${\cal S}$ to construct a new shift space 
which represents a finite cover of the shift $(\Sigma,T)$. To this end
partition the alphabet ${\cal S}$ as ${\cal S}=\cup_{i,m} {\cal S}^{i,m}$ 
$(i,m\in \{1,\dots,n\}$)
where 
we have $(x_j)_{0\leq j\leq s}\in {\cal S}^{i,m}$ 
if $x_0=\sigma_0^i$ and
$x_{s-k+\ell}=\sigma_\ell^m$
for $0\leq \ell \leq k$ (notations are as in Section \ref{sec:shift}). 
That this makes sense is immediate from the 
construction of ${\cal S}$.

Consider again the Birkhoff generic point $q\in {\cal C}$ which was used in the construction of ${\cal S}$. 
For $i\in \{1,\dots,n\}$, 
up to replacing $q$ by $\Phi^{t_i} q$ for some $t_i\in [0,r_0)$ where 
as before, $r_0>0$ is an upper bound for the roof function $\rho$ of the 
subshift of finite type $(\Omega,\sigma)$,  we may assume that 
there exists a biinfinite admissible sequence $(y^i_j)\in \Sigma$ with $y_0^i=\sigma_0^i$ 
which is mapped to $q$ by
the semi-conjugacy $H:(Y,\Theta^t)\to ({\cal C},\Phi^t)$ from Lemma \ref{flowonc}, 
that is, we view this sequence as a point in $Y$ on which the roof function takes on the value $0$  (we can use this property as 
a definition of the sequences $(\sigma_j^i)_{0\leq j\leq k}, i=1,\dots,n$).  

As the map $H$ is a semi-conjugacy onto an invariant Borel subset of ${\cal C}$,  
the set $\{1,\dots,n\}$ is finite, $q$ is a Birkhoff regular point for $\lambda$, 
and the measure $\lambda$ is ergodic under the flow $\Phi^t$, there exists 
some $i\in \{1,\dots,n\}$ and a Borel subset $Z$ of ${\cal C}$ of positive $\lambda$-measure with the following
property. For every $z\in Z$ there is some $t(z)\in [0,r_0)$ so that 
$\Phi^{t(z)}z$ is the image under $H$ of a biinfinite admissible sequence 
$(y_i(z))\in \Sigma$ with $y_0(z)\in \cup_m{\cal S}^{i,m}$, moreover there are 
infinitely many $j>0$ so that $y_j(z)\in \cup_m{\cal S}^{i,m}$.

We now define a new countable alphabet ${\cal A}$ as follows. Letters in ${\cal A}$ are finite 
admissible sequences $(y_1 \cdots y_s)$ in the letters of the alphabet ${\cal S}$ 
such that the following properties are satisfied.
We have $y_1\in \cup_j {\cal S}^{i,j}$, 
$y_s\in \cup_j{\cal S}^{j,i}$ and $y_u\in {\cal S}\setminus \cup_j {\cal S}^{j,i}$
for $2\leq u< s$. 
Here we call a sequence $(y_1 \cdots y_s)$ 
admissible if $a_{y_i,y_{i+1}}=1$ for all $i$ where $(a_{i,j})$ is the matrix
defining the Markov shift $(\Sigma,T)$. 

Let $(\Xi, \beth)$ be the shift in the countable alphabet ${\cal A}$. Note that unlike for the Markov shift 
$(\Sigma,T)$,  there is no restriction on transition maps. 
As before, there exists a natural embedding 
$E:\Xi\to \Sigma$ mapping orbits of $\beth$ to orbits of $T$.
Define a roof function $\omega$ on $\Xi$ 
by 
\begin{equation}\label{phi}
\omega(y_i)=\sum_ {0\leq u\leq \ell-1}\phi(T^u(y_i))\end{equation} where 
$\ell \geq 1$ is such that $E(\beth(y_i))=T^\ell (E(y_i))$ and $\phi$ is 
the roof function for $T$ defined in (\ref{phi}). The following 
summarizes the properties of this construction for our purpose.

\begin{lemma}\label{coding2}
Let $(Z,\Psi^t)$ be the suspension flow over $(\Xi,\beth)$ with roof function $\omega$. There is a semi-conjugacy
$F$ of $(Z,\Psi^t)$ onto a $\Phi^t$-invariant subset of ${\cal C}$ of full $\lambda$-measure. 
\end{lemma}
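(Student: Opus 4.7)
The plan is to realize the suspension flow $(Z,\Psi^t)$ as a flow-invariant piece of $(Y,\Theta^t)$ via concatenation, compose with the semi-conjugacy $H$ of Lemma \ref{flowonc}, and deduce the full-measure claim from $\Phi^t$-ergodicity of $\lambda$ together with Poincar\'e recurrence.

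First I would verify that the natural concatenation map $E:\Xi\to \Sigma$ encodes the first-return dynamics of $T$ to the cylinder
\[C:=\{(x_j)\in\Sigma: x_0\in \cup_m {\cal S}^{i,m}\}.\]
Each letter $Y\in {\cal A}$ is by definition a finite admissible ${\cal S}$-block beginning in $\cup_m {\cal S}^{i,m}$, avoiding $\cup_m {\cal S}^{m,i}$ strictly between endpoints, and ending in $\cup_m {\cal S}^{m,i}$; by the admissibility rule of $(\Sigma,T)$, this end condition forces the next letter to start again in $\cup_m {\cal S}^{i,m}$. Concatenating the blocks of $(Y_j)_{j\in\mathbb Z}\in\Xi$ with $Y_0$ occupying positions starting at $0$ therefore yields a biinfinite admissible sequence in $\Sigma$. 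The resulting map $E$ is a bijection of $\Xi$ onto the set $\Sigma^{\rm ret}$ of sequences in $C$ whose $T$-orbit visits $C$ infinitely often in both directions; under this bijection $\beth$ becomes the first-return map of $T$ to $C$, and $\omega$ is the associated return-time cocycle for $\phi$.

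Next I would extend $E$ to a flow-equivariant injection $\tilde F:(Z,\Psi^t)\to (Y,\Theta^t)$ by sending $(Y,s)\in Z$ with $0\leq s<\omega(Y)$ to the point of $Y$ represented in the $(\Sigma,\phi)$-suspension coordinates by $(E(Y),s)$ after reducing modulo the suspension identification. Well-definedness and injectivity are immediate from $\omega$ being the $\phi$-return cocycle to $C$. The image $\tilde F(Z)$ equals the $\phi$-suspension of the $T$-invariant set
\[\tilde\Sigma^{\rm ret}:=\{(x_j)\in\Sigma: x_j\in C\text{ for infinitely many positive and negative }j\};\]
in particular it is a $\Theta^t$-invariant Borel subset of $Y$. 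Setting $F:=H\circ \tilde F$ gives the desired semi-conjugacy from $(Z,\Psi^t)$ into $({\cal C},\Phi^t)$.

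For the full-measure claim, let $\hat\lambda$ denote the $T$-invariant Borel probability measure on $\Sigma$ whose $\phi$-suspension pushes forward under $H$ to $\lambda$, obtained by disintegrating the suspension measure implicit in Lemma \ref{flowonc}. The cylinder $C$ has positive $\hat\lambda$-measure: the paragraph preceding Definition \ref{alphabet} produces a Borel set $W\subset{\cal C}$ with $\lambda(W)>0$ such that for every $z\in W$ some translate $\Phi^{t(z)}z$ with $t(z)\in[0,r_0)$ is the $H$-image of a sequence whose base coordinate lies in $C$. Applying Poincar\'e recurrence for $\hat\lambda$ to both $T$ and $T^{-1}$ shows that $\hat\lambda$-almost every point of $C$ lies in $\Sigma^{\rm ret}$, so $\hat\lambda(\tilde\Sigma^{\rm ret})>0$ and thereby $\lambda(F(Z))>0$. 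Since $F(Z)$ is $\Phi^t$-invariant and $\lambda$ is ergodic under $\Phi^t$, it follows that $\lambda(F(Z))=1$.

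The principal obstacle is combinatorial rather than dynamical: verifying that the alphabet ${\cal A}$ and the admissibility rules of $(\Sigma,T)$ together capture precisely the first-return structure of $T$ to $C$, so that $\omega$ is indeed the return-time cocycle for $\phi$. Once this bookkeeping is in place, the dynamical content reduces to Poincar\'e recurrence combined with ergodicity of $\lambda$ under $\Phi^t$.
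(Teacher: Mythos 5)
Your proof is correct and takes essentially the same route as the paper: embed $(\Xi,\beth)$ into $(\Sigma,T)$ via concatenation, lift to the suspensions and compose with the semi-conjugacy $H$ from Lemma \ref{flowonc}, then observe that the image is a $\Phi^t$-invariant set of positive $\lambda$-measure (via the positive-measure Borel set singled out in the paragraph preceding Definition \ref{alphabet}) and invoke ergodicity. The paper states this more tersely, while you fill in the first-return interpretation of $\beth$ and the Poincar\'e-recurrence step that ensures $\hat\lambda$-a.e.\ point of the base cylinder lies in $\Sigma^{\rm ret}$; both of these are consistent with the paper's intent and useful elaborations rather than a different argument.
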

\begin{proof}
Consider the embedding 
$E:\Xi\to \Sigma$. By the definition of the roof function $\omega$ and composing with the 
semi-conjugacy $H:(Y,\Theta^t)\to ({\cal C},\Phi^t)$, the embedding $E$ 
induces a semi-conjugacy of the suspension flow 
$F:(Z,\Psi^t)\to ({\cal C},\Phi^t)$. As the image of the embedding $(\Xi,\beth)\to (\Sigma,T)$ is 
invariant under the shift $T$, the image of the semi-conjugacy $F$ 
is a $\Phi^t$-invariant Borel subset of 
${\cal C}$. We have to show that this set has full $\lambda$-measure.

However, by the choice of the set $\cup_m{\cal S}^{i,m}$, we have $\lambda(F(Z))>0$ and hence
$\lambda(F(Z))=1$ by invariance and 
ergodicity.  
%
\end{proof}

For simplicity of notation, put $\tau=\sigma_0=\sigma_0^i$ and $(\sigma_j)_{0\leq j\leq k}=(\sigma_j^i)_{0\leq j\leq k}$. 
Recall from Lemma \ref{period} 
that the sequence $(\sigma_j)_{0\leq j\leq k}$ determines 
a domain of period coordinates for $Q$ and ${\cal C}$. There exists a closed affine 
subcone $A(\sigma_k)\subset {\cal V}(\sigma_k)\subset {\cal V}(\sigma_0)$ in the 
linear space of solutions of the switch condition which equals the cone of 
vertical measured geodesic laminations of the points of  ${\cal C}$ contained in 
the domain of these period coordinates. 
Via a carrying map $\sigma_k\to \sigma_0=\tau$ and mass normalization, the interior of this cone
determines a connected $C^1$-submanifold 
\[C\subset {\cal V}_0(\tau),\] diffeomorphic to 
an open cell of dimension ${\rm dim}(A(\sigma_k))-1$. 

There is a natural euclidean inner product $\langle, \rangle$ 
on the real vector space of weight functions on 
the branches 
of $\tau$ (not necessarily satisfying the switch conditions) 
for which the weight 
functions $f_b$, defined by $f_b(b)=1$ and $f_b(e)=0$ for $e\not=b$, 
are an orthonormal basis. 
This inner product restricts to a Riemannian 
metric on $C$ which induces a 
Lebesgue measure ${\rm vol}$. 

The following is Definition 2.1 of \cite{AGY06}.

\begin{definition}\label{john}
A \emph{John domain} $\Delta$ is a finite dimensional connected Finsler manifold, 
together with a measure ${\rm Leb}$ on $\Delta$, with the following properties.
\begin{enumerate}
\item For $x,x^\prime\in \Delta$, let $d(x,x^\prime)$ be the infimum of the length
of a $C^1$-path contained in $\Delta$ and joining $x$ and $x^\prime$. For this distance,
$\Delta$ is bounded and there exist constants $C_0$ and $\epsilon_0$ such that, for all
$\epsilon <\epsilon_0$, for all $x\in \Delta$, there exists $x^\prime\in \Delta$ such that 
$d(x,x^\prime)\leq C_0\epsilon$ and such that the ball $B(x^\prime,\epsilon)$ is compactly 
contained in $\Delta$.
\item The measure ${\rm Leb}$ is a fully supported finite measure on $\Delta$, satisfying the following
inequality: for all $C>1$, there exists $A>1$ such that, whenever a ball $B(x,r)$ is compactly contained 
in $\Delta$, ${\rm Leb}(B(x,Cr)\leq A\,{\rm Leb}(B(x,r))$.
\end{enumerate}
\end{definition}

\begin{example}\label{polytope}
A \emph{compact convex polytope} in an Euclidean space $\mathbb{R}^n$ 
is the convex hull $P$ of finitely many points. 
By perhaps replacing $\mathbb{R}^n$ by the affine subspace of smallest dimension containing $P$, 
we may assume that the interior $\mathring P$ of $P$ is non-empty.  
This interior $\mathring P$, equipped with the restriction of the Euclidean metric, 
then is a John domain. 
Namely, straight line segments are the paths in $\mathring P$ of shortest lengths
between any two points in $\mathring P$, and $\mathring P$ is of bounded diameter for the
Euclidean distance function.

Let $\rho:P\to [0,\infty)$ be the distance function from the boundary of $P$. Since $P$ is a finite sided polytope,
the function $\rho$ is piecewise linear. Its singular set is the set of all points which are equidistant to at least two 
faces of $P$, and these faces realize the distance to the boundary. 
Thus if we denote by $A\subset \mathring P$ the 
set of all points for which $\rho$ is maximal, then $A\subset \mathring P$ is compact and convex, and there exists a
number $a>0$ such that for any $z\in \mathring P\setminus A$ there exists a piecewise linear path 
$\alpha:[0,\rho(A)-\rho(z)]\to \mathring{P}$ starting at $\alpha(0)=z$ and 
parameterized by arc length so that $\rho(\alpha(t))\geq \rho(z) +ta$ for all $t$.

Put $\epsilon_0=a\rho(A)/2$. Let $z\in \mathring P$ and let $\epsilon <\epsilon_0$. 
If $\epsilon <\rho(z)$ then the ball $\bar B(z,\epsilon)$ is compactly contained in $\mathring P$. 
Otherwise the above discussion shows that we can find a path starting at $z$ along which 
$\rho$ is strictly increasing with rate at least $a$. The first point $z^\prime$ along this 
path with $\rho(z^\prime)=2\epsilon$ is of distance at most $2\epsilon/a$ to $z$, and the 
ball $\bar B(z^\prime,\epsilon)$ is compactly contained in $\mathring P$. This shows 
the first requirement in 
the definition of a John domain. 
The second requirement is a consequence of the doubling property of 
the standard Lebesgue measure on $\mathbb{R}^n$.
\end{example}

\begin{example}\label{john2}
If $(\Delta,h)$ is a John domain for the Finsler metric $h$ and if $h^\prime$ is another Finsler metric on 
$\Delta$ which is bi-Lipschitz equivalent to $h$, then $(\Delta,h^\prime)$ is a John domain. 
\end{example}

\begin{lemma}\label{johnlem}
The $C^1$-manifold $C$ equipped with the restriction of the inner product
$\langle , \rangle$ and the induced volume is a John domain.
\end{lemma}
\begin{proof} For any large train track $\eta$, 
the subspace ${\cal V}_0(\eta)$ of the cone 
${\cal V}(\eta)$ in the vector space $V$ of functions on the set of branches of $\eta$ which 
are solutions of the switch condition is a compact convex polyhedron, with vertices 
(extreme points) the mass normalized vertex cycles of $\eta$. It can be viewed
as a polyhedron in the affine space $\alpha^{-1}(1)$ where $\alpha:V\to \mathbb{R}$ is the 
map which associates to $\xi\in V$ its total 
mass  $\alpha(\xi)=\xi(\eta)$.

By the properties of the sequence $(\sigma_j)$, 
the set $W$ of all measured laminations in ${\cal V}_0(\tau)$ which are carried by
$\sigma_k$ is the image of the polyhedron ${\cal V}(\sigma_k)$ 
by the composition of the linear carrying map ${\cal V}(\sigma_k)\to {\cal V}(\tau)$
with the mass normalization map $\xi\to \xi/\xi(\tau)$. As a consequence, $W$ is a compact
convex subpolyhedron of ${\cal V}_0(\tau)$.

The manifold $C$ is the intersection of the interior of $W$ 
with an affine subspace of $V$ and hence it is a compact convex polyhedron. 
Thus the lemma follows from Example \ref{polytope}. 
\end{proof}

\begin{lemma}\label{compatibility}
The letters of the alphabet
${\cal A}$ define a partition of an open subset of 
the manifold $C$ of full Lebesgue measure into
subsets $C(x)$ $(x\in {\cal A})$. For each $x\in {\cal A}$ there exists a 
$C^1$-diffeomorphism $H_x:C(x)\to C$.
\end{lemma}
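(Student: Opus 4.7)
The plan is to associate to each letter $x = (y_1 \cdots y_s) \in \mathcal{A}$ the concatenated full numbered splitting sequence $\tau = \alpha_0 \to \alpha_1 \to \cdots \to \alpha_N$ obtained by composing the elementary splits encoded by the $y_u \in \mathcal{S}$; the overlap of length $k$ between consecutive letters of $\mathcal{S}$ ensures the concatenation is well-defined. By construction the terminal train track $\alpha_N$ is combinatorially identical to $\sigma_k$, and the iterated carrying map $\alpha_N \to \tau$ embeds $\mathcal{V}(\alpha_N)$ as a convex subcone of $\mathcal{V}(\tau)$. The nesting conditions $A(y_u) \subset A(y_{u-1})$ built into the transition matrix of $(\Sigma, T)$ propagate to a closed affine subcone $A(\alpha_N) \subset A(\sigma_k)$ which parameterizes the vertical laminations of those points of $\mathcal{C}$ whose symbolic coding begins with $x$. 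I will set $C(x) \subset C$ to be the image of the interior of $A(\alpha_N)$ in $\mathcal{V}_0(\tau)$ under mass normalization.

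For the partition property, I will argue that every $\mu \in C$ which assigns strictly positive mass to every small branch at every step of its (uniquely determined) forward splitting sequence lies in exactly one $C(x)$. Indeed, such a $\mu$ has a deterministic infinite splitting history $\tau = \beta_0 \to \beta_1 \to \cdots$; by ergodicity of $\lambda$, Birkhoff genericity, and the fact that the semi-conjugacies of Lemmas \ref{flowonc}--\ref{coding2} have images of full $\lambda$-measure, the induced orbit in $(\Sigma, T)$ returns to $\cup_j \mathcal{S}^{j,i}$ infinitely often, and hence factors uniquely into letters of $\mathcal{A}$; the first factor is the desired $x$. The exceptional set of $\mu$ which vanishes on a branch at some finite step is a countable union of proper affine subspaces of $A(\sigma_k)$, hence Lebesgue negligible in $C$.

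To construct $H_x$, I take $\mu \in C(x)$ and read it off as a positive measure $\tilde\mu$ on $\alpha_N$ via the iterated carrying $\alpha_N \to \tau$. Then, using the combinatorial identification $\alpha_N \cong \sigma_k$, I view $\tilde\mu$ as an element of $\mathcal{V}(\sigma_k)$, push forward via the short carrying $\sigma_k \to \tau$ from the original $k$-step splitting sequence defining $C$, and renormalize in $\mathcal{V}_0(\tau)$. Each step is linear in weight-function coordinates except for normalization, which is smooth on the positive cone, so $H_x$ will be $C^1$. Surjectivity onto $C$ follows from the definition of $C$ combined with Lemma \ref{fillmore}(1), which guarantees that vertex cycles of $\sigma_k$ map onto $\tau$ so that the normalized image of $A(\sigma_k)$ is precisely $C$; the construction is manifestly reversible, giving a $C^1$ inverse.

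The hardest step will be the full-measure assertion, which requires showing that Lebesgue-almost every $\mu \in C$ induces an admissible path that returns to the return set $\cup_j \mathcal{S}^{j,i}$ infinitely often. This couples the deterministic combinatorics of measured lamination splittings with the ergodicity of $\lambda$, and I expect the cleanest route is to exploit the absolute continuity of $\lambda$ along the vertical foliation (inherited from the period coordinate description of $\mathcal{C}$) in order to transfer $\lambda$-a.e.\ recurrence of the differential $q$ into Lebesgue-a.e.\ recurrence of its normalized vertical lamination in $C$.
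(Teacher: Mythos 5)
Your construction of $H_x$---composing the canonical identification of numbered tracks $x_0\to x_{s-k}$ with the carrying map $x_{s-k}\to x_0$ and renormalizing---is exactly the paper's, and the overall architecture (construct $C(x)$, prove disjointness mod null sets, prove full measure, build $H_x$) is the same. However, the disjointness step has a genuine gap, and it is precisely the point the paper singles out as subtle.

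You claim the exceptional set where a measure vanishes on some branch at some splitting step is ``a countable union of proper affine subspaces of $A(\sigma_k)$, hence Lebesgue negligible in $C$.'' Properness is not free here. When two letters of $\mathcal{A}$ diverge at step $u$, the ambiguous locus is the hyperplane $\mathcal{V}(\hat x_{u+1})\cap \mathcal{V}(\hat y_{u+1})$ inside $\mathcal{V}(x_u)$. But $A(x_u)$ has positive codimension in $\mathcal{V}(x_u)$ whenever $\mathcal{C}$ is a proper affine invariant submanifold, so a priori $A(x_u)$ could sit entirely inside that hyperplane, making the ``exceptional set'' all of $C$ rather than null. No combinatorial train-track argument excludes this. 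The paper instead observes that a lamination carried by both split tracks has support outside $\mathcal{L}\mathcal{L}(Q)$, invokes Lemma 3.7 of \cite{H11} plus Poincar\'e recurrence to see that $\lambda$-a.e.\ differential has vertical support in $\mathcal{L}\mathcal{L}(Q)$, and then uses that $\lambda$ is Lebesgue in period coordinates with Lebesgue conditionals along the stable foliation to conclude the hyperplane slice is Lebesgue-null in $C$. Your final paragraph correctly identifies this transfer (ergodicity of $\lambda$ and absolute continuity along the vertical foliation) as the key tool, but you only deploy it for the infinite-recurrence/full-measure statement; it is already needed at each finite splitting step to justify the ``proper affine subspace'' assertion, which as stated is unsupported.
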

\begin{proof}
We show first that the letters from the alphabet ${\cal A}$ define 
a partition of an open subset of $C$ of full Lebesgue
measure.

Thus  let $x\not=y\in {\cal A}$. Then $x,y$ are realized by full numbered splitting sequences 
$(x_\ell)_{0\leq \ell\leq m}$ and $(y_j)_{0\leq j\leq s}$ starting 
with the sequence $(\sigma_j)_{0\leq j\leq k}$.

As the sequences $(x_\ell),(y_j)$ are distinct, there
exists a largest $u\in [k, \min\{m,s\}-k-1]$ such that that $x_u=y_u$. 
Then there exists a large branch $e\in x_u$ so that up to exchanging $(x_\ell),(y_j)$, the transition
modifying $x_u$ to $x_{u+1}$ contains a right split at $e$ 
and the transition modifying $y_u=x_u$ to 
$y_{u+1}$ contains a left split at $e$. Let $\hat x_{u+1},\hat y_{u+1}$ be the 
train tracks obtained from $x_u$ by these splits at $e$; then $\hat x_{u+1}$ is splittable to $x_{u+1}$ and 
$\hat y_{u+1}$ is splittable to $y_{u+1}$. In particular, $x_{u+1}$ is carried by 
$\hat x_{u+1}$, and $y_{u+1}$ is carried by $\hat y_{u+1}$. 

By the construction of a split, we have ${\cal V}(x_u)={\cal V}(\hat x_{u+1})\cup {\cal V}(\hat y_{u+1})$ 
where the sets ${\cal V}(\hat x_{u+1}), {\cal V}(\hat y_{u+1})$ are closed subcones 
of ${\cal V}(x_u)$. Their interiors are  
non-empty since $x_{u+1},y_{u+1}$ are large by assumption and hence 
$\hat x_{u+1},\hat y_{u+1}$ are large as well. 
The intersection ${\cal V}(\hat x_{u+1})\cap {\cal V}(\hat y_{u+1})$ 
is contained in a hyperplane of ${\cal V}(x_u)$. 

The support of a measured geodesic lamination 
which is carried by both $\hat x_{u+1}$ and $\hat y_{u+1}$ is not contained in ${\cal L\cal L}(Q)$. 
But by Lemma 3.7 of \cite{H11} and the Poincar\'e recurrence theorem, 
the set of differentials in ${\cal C}$ whose vertical measured geodesic laminations have support 
in ${\cal L\cal L}(Q)$ has full $\lambda$-measure. 
Thus the $\lambda$-measure of the set of differentials with vertical measured lamination in 
${\cal V}(\hat x_{u+1})\cap {\cal V}(\hat y_{u+1})$ vanishes.

Since the measure $\lambda$ is the standard Lebesgue measure in period coordinates for 
${\cal C}$ (up to scaling), it is absolutely continuous with respect to the 
\emph{local stable foliation} of 
${\cal C}$ into leaves which consist of differentials
with the same vertical measured lamination, with conditionals in  the Lebesgue measure class.  
As a consequence, if we consider in period coordinates determined by the sequence 
$(x_j)_{0\leq j\leq u}$ the cone $A(x_u)$  consisting of 
the vertical measured geodesic laminations of differentials in ${\cal C}$ which are carried by $x_u$, 
then the Lebesgue
measure of the intersection $A(x_u)\cap {\cal V}(\hat x_{u+1})\cap {\cal V}(\hat y_{u+1})\subset A(x_u)$  
vanishes, where the Lebesgue measure is the restriction of the Lebesgue measure
on $A(x_u)$.  Equivalently, we have ${\rm vol}(C\cap {\cal V}(\hat x_{u+1})\cap {\cal V}(\hat y_{u+1}))=0$. 
Note that this statement required an argument as for general affine invariant manifolds,
the submanifold $C\subset {\cal V}_0(\tau)$ has positive codimension and hence may a priori
be entirely contained in a hyperplane of positive codimension.

Since by construction, every $\xi\in C$ is the vertical measured lamination for some $z\in {\cal C}$, 
it follows from Lemma \ref{coding2} and absolute continuity of $\lambda$ with respect to the 
local stable foliation that 
the letters from the alphabet ${\cal A}$ define a partition
of an open subset of $C$ of full Lebesgue measure. For $x=(x_\ell)_{0\leq \ell \leq m}\in {\cal A}$, the 
partition set  $C(x)\subset C$ defined by $x$ 
 is the interior of the closed cell of measured laminations 
which are carried by $x_{m-k}$ and which are vertical measured laminations of differentials
$q\in {\cal C}$ in the period coordinates determined by the defining sequence 
$(\sigma_j)_{0\leq j \leq k}=(x_j)_{0\leq j\leq k}$. 

We are left with showing that there exists a natural $C^1$-diffeomorphism $H_x:C(x)\to C$. 
Let again $x=(x_\ell)_{0\leq \ell\leq m}\in {\cal A}$. 
By construction, 
the train track $x_{s-k}$ coincides with $x_0=\tau$ as a numbered train track,
that is,  there is a canonical isomorphism $x_0\to x_{s-k}$. The composition 
of this isomorphism with 
a carrying map $x_{s-k}\to x_0$, followed by total mass renormalization, 
then defines a $C^1$-diffeomorphism $C \to C(x)$ whose inverse $H_x$ has the 
required properties.   
\end{proof}

We next collect more information on the maps $H_x$ $(x\in {\cal A})$. 
By Lemma \ref{fillmore} and the construction of the sequence
$(\sigma_j)_{0\leq j\leq k}$, 
there is a number $\chi>0$ such that the manifold 
$C\subset {\cal V}_0(\tau)$ is contained in the set
${\cal P}(\tau,\chi)={\cal P}(\tau)\subset {\cal V}_0(\tau)$
of all measured geodesic laminations
which give weight bigger than $\chi$ to every branch
of $\tau$.


Let $V$ be the vector space of weight functions on the branches of $\tau$ which 
satisfy the switch condition.
For $x=(x_j)_{0\leq j\leq \ell}\in {\cal A}$, the composition of the 
natural identification $\tau =x_0\to x_{\ell-k}$ with a
carrying map $x_{\ell-k}\to x_0=\tau$ defines
a linear isomorphism $B_x$ of $V$. That $B_x$ is of maximal rank and hence 
a linear isomorphism
is immediate from an easy dimension count.

The linear map $B_x$ maps the closed cone
${\cal V}(\tau)$ into itself and induces
an embedding
\begin{equation}\label{inverse}
L:\zeta\in {\cal V}_0(\tau)\to 
B_x(\zeta)/(B_x(\zeta)(\tau))\in {\cal V}_0(\tau)
\end{equation}
where $B_x(\zeta)(\tau)$ is the total weight of the transverse measure $B_x(\zeta)$
on $\tau$.  
The 
inverse $L^{-1}$ of $L$, defined on the image of $L$, restricts to the $C^1$-diffeomorphism $H_{x}:C(x)\to C$.
Write $DH_{x}$ to denote its differential.

Define a Finsler metric $\Vert \,\Vert_{\rm sup}$ on 
${\cal P}(\tau)$ as follows.
Let $\nu\in {\cal P}(\tau)$. Then every nearby
point $\mu\in {\cal V}_0(\tau)$ can be 
represented in the form 
$\mu=\nu+\alpha$ where $\alpha$ is a signed weight
function on $\tau$ satisfying the switch conditions
with $\alpha(\tau)=0$. Thus the tangent space 
$T_\nu{\cal V}_0(\tau)$ of ${\cal V}_0(\tau)$ at $\nu$ 
can naturally be identified with the vector space 
$W_0={\rm ker}(\theta)$ where $\theta:V\to \mathbb{R}$ is the linear functional 
defined by $\theta(\alpha)=\alpha(\tau)$. 
For $\nu\in {\cal P}(\tau)$ and 
$\alpha\in W_0=T_\nu{\cal V}_0(\tau)$ put
\[\Vert \alpha\Vert_{\rm sup}=
\max\{\vert \alpha(b)\vert/\nu(b)\mid b\}.\]
Since $C\subset {\cal P}(\tau)$ 
this construction defines a Finsler metric on $C$ which 
is equivalent to the restriction of the euclidean inner product
$\langle, \rangle$. In particular, by Example \ref{john2}, 
$C$ equipped with this Finsler metric and the 
volume form ${\rm vol}$ is a  John domain in the sense of Definition \ref{john}.

\begin{lemma}\label{smoothcontrol}
There is a number $\kappa >1$ and for all 
$x\in {\cal A}$
 there is 
some $c_{x}>\kappa$ such that 
\[\kappa \Vert v\Vert_{\rm sup}\leq 
\Vert DH_{x}v\Vert_{\rm sup} 
\leq c_{x}\Vert v\Vert_{\rm sup}\text{ for all }v \in TC(x).\]
\end{lemma}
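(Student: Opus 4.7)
The plan is to express $H_x$ explicitly as the restriction of a projective linear map coming from $B_x^{-1}$, then read off the upper bound from smoothness and the lower bound from the uniform contraction estimate of Lemma \ref{fillmore}(3). First, I would unwind the construction of $L$ and $H_x = L^{-1}$. At $\eta = L(\zeta) \in C(x)$, a tangent vector $v$ to ${\cal V}_0(\tau)$ satisfies $v(\tau) = 0$, and writing $v = DL(\zeta)u$ for some $u \in T_\zeta {\cal V}_0(\tau)$, a direct computation from $L(\zeta) = B_x(\zeta)/(B_x(\zeta)(\tau))$ yields the pointwise formula
$$\frac{v(b)}{\eta(b)} = \frac{(B_x u)(b)}{B_x(\zeta)(b)} - \frac{(B_x u)(\tau)}{B_x(\zeta)(\tau)}$$
for every branch $b$ of $\tau$. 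Since $\zeta \in C \subset {\cal P}(\tau,\chi)$ and $B_x$ preserves ${\cal V}(\tau)$, the denominators $B_x(\zeta)(b)$ admit a uniform positive lower bound for every branch.

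For the upper bound, the formula above realizes $DH_x(\eta)$ as the restriction to $T_\eta C(x)$ of a linear map of $W_0$ depending on $B_x^{-1}$, whose operator norm is finite on $C(x)$ because $B_x$ is an invertible linear isomorphism. Taking $c_x$ to be the supremum of $\|DH_x(\eta)\|$ over $\eta \in C(x)$ (which is finite), and then enlarging $c_x$ if necessary to ensure $c_x > \kappa$, gives the desired upper inequality.

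For the lower bound, I apply Lemma \ref{fillmore}(3) to the splitting sequence underlying $x$. Since every letter $x \in {\cal A}$ begins with the block $(\sigma_j^i)_{0 \leq j \leq k}$, Lemma \ref{fillmore} applies with a constant $\delta > 0$ that is independent of $x$. Writing $d(\mu,\nu) = \max_b |\log(\mu(b)/\nu(b))|$ for the $L^\infty$-log distance between positive normalized measures, the estimate of part (3) reads
$$1 - e^{-d(L\mu, L\nu)} \leq (1-\delta)\bigl(1 - e^{-d(\mu,\nu)}\bigr).$$
Passing to the infinitesimal level at $\mu = \nu$ and noting that $\|w\|_{\sup}$ is precisely the first-order expansion of $d$ along a curve with velocity $w$, I obtain that $L$ contracts the Finsler norm $\|\cdot\|_{\sup}$ by the uniform factor $1-\delta$, hence $H_x = L^{-1}$ expands it by $\kappa = 1/(1-\delta) > 1$, uniformly over $x \in {\cal A}$.

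The main technical obstacle is the rigorous passage from the finite Hilbert-type contraction in Lemma \ref{fillmore}(3) to an infinitesimal Finsler expansion: one must check that $\|v\|_{\sup}$ at $\nu$ is the correct infinitesimal limit of $d(\nu + tv, \nu)/t$ as $t \to 0$, and that the differential inequality one extracts controls all tangent directions simultaneously. An alternative would be to bypass this limiting argument altogether by inserting the pointwise formula for $v(b)/\eta(b)$ into the definition of $\|v\|_{\sup}$ and estimating $\max_b |v(b)/\eta(b)|$ from below directly using a uniform comparison between $(B_x u)(b)/B_x(\zeta)(b)$ and $u(b)/\zeta(b)$ supplied by part (2) of Lemma \ref{fillmore}.
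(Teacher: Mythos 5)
Your proof takes essentially the same route as the paper: the upper bound follows from smoothness of the projective linear map $L$ on a compact domain (giving $c_x$ depending on $x$), and the lower bound comes from Lemma~\ref{fillmore}(3), with your algebraic manipulation of $1-e^{-d}$ being exactly the paper's computation of $\tilde u/u \leq 1-\delta$ in different notation. The "technical obstacle" you flag — passing from the finite contraction to the infinitesimal Finsler estimate — is also present in the paper, which handles it with the same rescaling-to-zero observation ("as we are interested in derivatives").
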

\begin{proof} The lemma is a fairly  immediate consequence of 
Part 2) of  Lemma \ref{fillmore}.

By construction of the alphabet ${\cal A}$, each $x\in {\cal A}$ corresponds to 
a finite full numbered splitting sequence
$(\tau_i)_{0\leq i\leq \ell}$.   
The subsequences $(\tau_i)_{0\leq i\leq k}$ and
$(\tau_i)_{\ell-k\leq i\leq \ell}$ 
satisfy the assumptions
in Lemma \ref{fillmore}, and the map $H_{x}$ can be viewed as 
the inverse of the restriction of the projective linear carrying map 
$\hat L:{\cal V}_0(\tau_{\ell-k})\to
{\cal V}_0(\tau_0)$ to a subset of ${\cal P}(\tau_{\ell-k})$.
Thus to show
the left hand side of the inequality in the lemma 
it suffices to show that the restriction of 
$\hat L$ to ${\cal P}(\tau_{\ell-k})$ contracts the Finsler metric
$\Vert \,\Vert_{\rm sup}$ by a fixed constant $c<1$ not depending 
on $x\in {\cal A}$. 

Thus let $\nu\in {\cal P}(\tau_{\ell-k})$ and let 
$0\not=\alpha\in T_\nu {\cal V}_0(\tau_{\ell-k})$ be a signed weight function on $\tau_{\ell-k}$
satisfying the switch conditions with 
$\alpha(\tau_{\ell-k})=0$. As we are interested in derivatives,
by rescaling $\alpha$ 
we may assume that 
$\mu=\nu+\alpha>0$, that is, 
$\mu$ is a normalized transverse measure on $\tau_{\ell-k}$. 
Moreover, by perhaps replacing $\alpha$
by $-\alpha$ we may assume that 
\[u=\Vert \alpha\Vert_{\rm sup}=\max\{ \alpha(b)/\nu(b) \mid b\}.\]

Write 
\[u+1=\max\{\frac{\alpha(b)+\nu(b)}{\nu(b)}\mid b\}=
\max\{\frac{\mu(b)}{\nu(b)}\mid b\}=a_0^{-1};\]
then $a_0=\min\{\frac{\nu(b)}{\mu(b)}\mid b\}$.
Let $\mu_0,\nu_0$ be the normalized transverse measures
on $\tau_0$ which are the images of $\mu,\nu$ under
a carrying map. Put 
\[\tilde a_0=\min\{\frac{\nu_0(b)}{\mu_0(b)}\mid b\} <1.\]
The third part of Lemma \ref{fillmore} shows that
$\tilde a_0\geq a_0+\delta(1-a_0)$ where
$\delta>0$ is a universal constant.

Put $\tilde u=\tilde a_0^{-1}-1$; then we have
\[\frac{\tilde u}{u}=\frac{1-\tilde a_0}{1-a_0}\frac{a_0}{\tilde a_0}
\leq (1-\delta)\frac{a_0}{a_0+\delta(1-a_0)}\leq 1-\delta\]
which shows the left hand side of the inequality stated in the
lemma.

The map (\ref{inverse})
can be identified with a projective linear
map defined on a compact domain in a
real projective space and hence 
the norm of its derivative is bounded away from zero
by a constant depending on $x$.
This shows the right hand
side of the inequality in the lemma.
%
\end{proof}



For each $x\in {\cal A}$ let $J(x)$ be the inverse of the Jacobian of the map
$H_x:C(x)\to C$ with respect to the volume element ${\rm vol}$ on $C$. 
The function $\log J(x)\circ H_x^{-1}$ on $C$ is of class $C^1$. 
The final goal of this section is to control 
the differential of 
$ \log J(x) \circ H_x^{-1} $. To this end 
we need some preparation which
will be used to control the roof function $\omega$ in Section \ref{sec:control} as well. 

Recall the linear map
$B_x:V\to V$ obtained as a composition of the natural identification
$x_0\to x_{\ell-k}$ with the carrying map $x_{\ell-k}\to x_0$. 
The linear space $V$ 
admits a real basis $b_1,\dots,b_u$  consisting of vertex cycles for 
$x_0=\tau$. With respect to this basis, the entries of the
matrix defining the 
linear map $B_x$ 
are non-negative (in fact positive by the construction of the alphabet ${\cal A}$) and hence $B_x$ 
is a Perron Frobenius map. In particular, there exists a simple eigenvalue
$\lambda_1(B_x)>0$ of maximal absolute value, and the one-dimensional
eigenspace for this eigenvalue is spanned by a vector with positive entries with
respect to the basis $b_1,\dots,b_u$. 

We next observe that the restriction of the roof function $\omega$ to $C(x)$ coincides with
$\log \lambda_1(B_x)$ up to a universal additive constant.

\begin{lemma}\label{eigenvalue}
  There exists a number $\chi>1$ so that for every $x\in {\cal A}$ and every
$\mu\in C(x)$ we have $e^{\omega(\mu)}/\lambda_1(B_x)\in [\chi^{-1},\chi]$.   
\end{lemma}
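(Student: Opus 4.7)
The plan is to unravel $\omega(\mu)$ via a telescoping sum, identifying it up to a uniformly bounded additive error with the log of the ratio of total masses of the carried vertical lamination on the endpoints $x_0$ and $x_{s-k}$ of the splitting sequence representing $x$, and then to invoke Perron--Frobenius for $B_x$ together with the weight-ratio control from Lemma \ref{fillmore}(2) in order to compare this log-ratio with $\log\lambda_1(B_x)$.

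First I would expand $\omega = \sum_u \phi \circ T^u$ using the formulas $\phi = \sum_i \rho \circ \sigma^i$ and $\rho((z_j)_j) = \log(\mu(z_1)^{-1}\mu(z_0))$ of Section \ref{sec:shift}. The logarithms telescope along the full numbered splitting sequence $(x_j)_{0\leq j\leq s}$ representing the letter $x\in {\cal A}$, and the only uncancelled terms come from the bounded overlap of length $k+1$ between consecutive letters of ${\cal S}$; since every $\rho$-value is bounded by $r_0$, these contribute a uniformly bounded additive error, giving
\[
\omega(\mu) = \log\bigl(\mu(x_0)/\mu(x_{s-k})\bigr) + O(1),
\]
where $\mu(x_j) = \sum_{b\in x_j}\mu(b)$ is the total weight on the branches of $x_j$. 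Since $\mu\in C(x)$ is normalized on $x_0=\tau$, and since under the numbering identification $x_0\cong x_{s-k}$ the weight function of $\mu$ on the branches of $x_{s-k}$ is by construction $B_x^{-1}\mu$ viewed on $\tau$, this becomes $\omega(\mu) = -\log(B_x^{-1}\mu)(\tau) + O(1)$.

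Next I would invoke Perron--Frobenius for $B_x$. By Part (1) of Lemma \ref{fillmore} applied to the prefix $(\sigma_j)_{0\leq j\leq k}$ of the splitting sequence of $x$, every vertex cycle of $\sigma_k$ maps onto $\tau$, so the matrix of $B_x$ in the vertex cycle basis has strictly positive entries. Hence $B_x$ admits a simple dominant eigenvalue $\lambda_1(B_x)>0$ with positive eigenline spanned by some $v^*\in {\cal V}(\tau)$; normalizing so that $v^*(\tau)=1$ yields $(B_x^{-1}v^*)(\tau) = \lambda_1(B_x)^{-1}$ exactly. Both $\mu$ and $v^*$ lie in the image subcone $B_x({\cal V}(\tau))$, so by Part (2) of Lemma \ref{fillmore} their branch weights on $\tau$ are pairwise comparable within $[\beta,\beta^{-1}]$, bounding the Hilbert projective distance $d_H(\mu,v^*)$ by a universal $D=D(\beta)$. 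Using the Perron--Frobenius decomposition $V = \mathbb{R}v^* \oplus V_\perp$ into the dominant eigenline and a $B_x$-invariant complement, together with the fact that $B_x^{-1}\mu$ must remain in the positive cone ${\cal V}(\tau)$, this pinches $(B_x^{-1}\mu)(\tau)$ into $[\chi_0^{-1}\lambda_1(B_x)^{-1},\chi_0\lambda_1(B_x)^{-1}]$ for a universal constant $\chi_0>1$. Exponentiating and combining with the first step gives $e^{\omega(\mu)}/\lambda_1(B_x) \in [\chi^{-1},\chi]$ for a universal $\chi$.

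The main obstacle is the final comparison: $B_x^{-1}$ is not globally positivity-preserving, and the subdominant eigenvalues of $B_x^{-1}$ could in principle far exceed $\lambda_1(B_x)^{-1}$. Universality of $\chi$ requires that the uniform Hilbert-diameter control on the image cone $B_x({\cal V}(\tau))$ provided by Lemma \ref{fillmore}(2), combined with the positive-cone constraint on $B_x^{-1}\mu$, forces $\mu$ to decompose as a dominant multiple of $v^*$ plus a perpendicular correction whose contribution to $(B_x^{-1}\mu)(\tau)$ is absorbed into the universal multiplicative error, rather than blowing up with the spectral gap of $B_x$.
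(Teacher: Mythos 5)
Your first two steps are sound and track the paper's proof: the telescoping identity $\omega(\mu)=-\log\bigl(B_x^{-1}\mu\bigr)(\tau)$ for normalized $\mu\in C(x)$ is correct (in fact exact, no $O(1)$ error is needed once one keeps track of the overlaps), and the Perron--Frobenius eigenvector $v^*$, normalized in ${\cal V}_0(\tau)$, does satisfy $\omega(v^*)=\log\lambda_1(B_x)$. The paper locates $v^*$ inside $C(x)$ via the Brouwer fixed point theorem applied to the projectivization of $B_x$ acting on $\mathbb{R}_+\overline C$, which you should make explicit since the transition-matrix condition $A(y_u)\subset A(x_s)$ is what guarantees $B_x$ preserves the affine cone $\mathbb{R}_+\overline C$ and hence that the eigenvector is a genuine vertical lamination of a point in ${\cal C}$.

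The gap is exactly where you flag it. The argument via the spectral splitting $V=\mathbb{R}v^*\oplus V_\perp$ and the constraint $B_x^{-1}\mu\in{\cal V}(\tau)$ does not close: $B_x^{-1}$ restricted to $V_\perp$ has norm of order $\lambda_2(B_x)^{-1}\gg\lambda_1(B_x)^{-1}$, the positive-cone condition on $B_x^{-1}\mu$ is one-sided and bounds only the negative part of $B_x^{-1}w$, and you have no control of $\chi_1 v^*-\mu$ under $B_x^{-1}$ since that vector need not lie in $B_x({\cal V}(\tau))$. The correct and simpler ingredient is positivity of $B_x$ itself, not of $B_x^{-1}$, combined with the uniform weight-ratio bound on $C$ (not on $C(x)$): since $C\subset{\cal P}(\tau,\chi)$, any two normalized measures in $C$ have branch weights uniformly comparable within a universal factor. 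Normalize $\tilde\mu=B_x^{-1}\mu/(B_x^{-1}\mu)(\tau)\in C$; then $\tilde\mu$ and $v^*$ have uniformly comparable branch weights, and applying the \emph{nonnegative} matrix $B_x$ gives $(B_x\tilde\mu)(\tau)$ uniformly comparable to $(B_xv^*)(\tau)=\lambda_1(B_x)$, while $(B_x\tilde\mu)(\tau)=1/(B_x^{-1}\mu)(\tau)$. This is precisely the Lipschitz bound on $\nu\mapsto\log\nu(x_0)$ from Lemma \ref{lipschitz}, and it is what the paper is invoking when it points to $C\subset{\cal P}(\tau)$ and part (3) of Lemma \ref{fillmore}: the variation of $\omega$ on $C(x)$ is uniformly bounded, so $|\omega(\mu)-\omega(v^*)|\leq\log\chi$ directly, with no need for the spectral decomposition of $B_x$ at all.
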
  
\begin{proof}
Let $x=(x_p)_{0\leq p\leq m}$.    
The Perron Frobenius linear map $B_x$ maps the closed cone $\mathbb{R}_+ \overline{C}\subset V$
homeomorphically  onto the closed cone $\mathbb{R}_+\overline{C(x)}$
which is contained in the interior of $\mathbb{R}_+ \overline{C}$.
Thus by the Brower fixed point theorem,  applied to the action of $B_x$ on the projectivizations 
of these cones which are compact cells, 
a Perron
Frobenius eigenvector $v$ of $B_x$ with positive entries in the basis $b_1,\dots,b_u$ 
is contained in the
cone $\mathbb{R}_+C(x)$.

Choose the Perron Frobenius eigenvector $v$ of $B_x$ so that $v\in C(x)\subset {\cal V}_0(\tau)$. 
As $B_x(v)=\lambda_1(v) v$ we have $\omega(v)=\log \lambda_1(v)$.
To complete the proof of the lemma it now suffices to show that
for all $\mu,\nu\in C(x)$ it holds $\omega(\nu)/\omega(\mu)\in [\chi^{-1},\chi]$ where
$\chi>0$ is a universal constant. But this follows from the fact that
$C\subset {\cal P}(\tau)$ and the third part of 
Lemma \ref{fillmore}. We refer to the proof of Lemma \ref{smoothcontrol} for a more 
detailed discussion. 
\end{proof}

Let $n\geq 1$ be the dimension of the manifold $C$.

\begin{proposition}\label{volumeestimate}
\begin{enumerate}
 \item 
  There exists a number $\theta >1$ such that
  \[{\rm vol}(C(x))\in [\theta^{-1}\lambda_1(B_x)^{-n},\theta \lambda_1(B_x)^{-n}] \text{ for all }x\in {\cal A}.\]
\item The differential of ${\rm log} J(x)\circ H_x^{-1}$ is uniformly bounded in $C^0$, independent of $x\in {\cal A}$.
\end{enumerate}
\end{proposition}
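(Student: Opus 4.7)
For Part (1), I would compute the Jacobian of the diffeomorphism $L = H_x^{-1}\colon C \to C(x)$ and integrate. Recall that $L$ is the restriction to $C$ of the projectivization $\zeta \mapsto B_x\zeta/(B_x\zeta)(\tau)$ of the linear isomorphism $B_x\colon W \to W$, where $W = A(\sigma_k)$ is the $(n+1)$-dimensional affine cone with $C = W \cap {\cal V}_0(\tau)$. Passing to polar coordinates on the open cone $\mathbb{R}_+ C \subset W$ with radial function $r(\zeta) = \zeta(\tau)$, so that Lebesgue measure on $W$ factors as $r^n\, dr\, d{\rm vol}_C$, a direct computation yields the formula
\[
|\det DL(\zeta)| \;=\; \frac{|\det(B_x|_W)|}{(B_x\zeta)(\tau)^{n+1}}, \qquad \zeta\in C.
\]

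Next I control the denominator. Since $C \subset {\cal P}(\tau,\chi)$, every $\zeta\in C$ is comparable up to universal multiplicative constants to the normalized Perron eigenvector $v\in C$ of $B_x|_W$. As $(B_xv)(\tau) = \lambda_1(B_x)v(\tau) = \lambda_1(B_x)$, part (2) of Lemma~\ref{fillmore} gives $(B_x\zeta)(\tau) \asymp \lambda_1(B_x)$ uniformly in $\zeta\in C$ and $x\in{\cal A}$. Integrating the Jacobian formula over $C$ therefore produces
\[
{\rm vol}(C(x)) \;\asymp\; \frac{|\det(B_x|_W)|}{\lambda_1(B_x)^{n+1}}\cdot {\rm vol}(C).
\]
To conclude Part (1) it remains to establish $|\det(B_x|_W)|\asymp \lambda_1(B_x)$, and this is the main obstacle. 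The plan is to decompose $B_x|_W$ relative to its Perron eigenline $\mathbb{R}v$ and a $B_x$-invariant transverse complement: the Perron direction contributes the factor $\lambda_1(B_x)$ to the determinant, while the determinant of the action on the complement is bounded above by the Hilbert-metric contraction given in part (3) of Lemma~\ref{fillmore}, and bounded below using invariance of the affine Lebesgue measure $\lambda$ on ${\cal C}$ under the Teichm\"uller flow, applied to the Jacobian of the first-return map in the transverse horizontal coordinates furnished by Lemma~\ref{period}.

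For Part (2), I differentiate the Jacobian formula directly. Since $\log J(x)\circ H_x^{-1}(\zeta) = \log|\det(B_x|_W)| - (n+1)\log(B_x\zeta)(\tau)$, only the second term contributes to the differential, and for $\alpha\in T_\zeta C$,
\[
d\bigl[\log J(x)\circ H_x^{-1}\bigr](\zeta)\cdot\alpha \;=\; -(n+1)\,\frac{(B_x\alpha)(\tau)}{(B_x\zeta)(\tau)}.
\]
If $\Vert\alpha\Vert_{\rm sup}\le 1$ with respect to the Finsler norm centered at $\zeta$, then $|\alpha(b)|\le\zeta(b)$ for every branch $b$ of $\tau$, so the nonnegativity of the matrix of $B_x$ in the vertex-cycle basis yields $|(B_x\alpha)(\tau)|\le (B_x|\alpha|)(\tau)\le (B_x\zeta)(\tau)$. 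Hence the above differential has operator norm at most $n+1$, uniformly in $x\in{\cal A}$ and $\zeta\in C(x)$, completing Part (2).
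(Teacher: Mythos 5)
Your Part (2) argument is clean and correct — arguably cleaner than the paper's. You derive the closed form $\log J(x)\circ H_x^{-1}(\zeta) = \log|\det(B_x|_W)| - (n+1)\log(B_x\zeta)(\tau)$, note that the first term is constant and drops out under differentiation, and bound the remaining term by $n+1$ using the fact that $B_x$ has nonnegative matrix in the branch basis (so $|\alpha(b)|\le\zeta(b)$ implies $|(B_x\alpha)(\tau)|\le(B_x\zeta)(\tau)$). This also makes Part (2) independent of the precise constant in Part (1).

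Part (1), however, has a genuine gap. Your Jacobian formula and the estimate $(B_x\zeta)(\tau)\asymp\lambda_1(B_x)$ are both correct, and they do reduce the statement to $|\det(B_x|_W)|\asymp\lambda_1(B_x)$; you are also right that this is where all the work lies. But you do not prove it, and the plan you sketch would not close the gap. The Hilbert-metric contraction from part (3) of Lemma~\ref{fillmore} controls the ratio of the subdominant spectral radius to $\lambda_1(B_x)$; it bounds each non-Perron eigenvalue by $(1-\delta)\lambda_1(B_x)$, so it only yields $|\det(B_x|_{W_\zeta})|\lesssim\lambda_1(B_x)^n$, which is far too weak — one needs $|\det(B_x|_{W_\zeta})|\asymp 1$, i.e., that the product of the non-Perron eigenvalues is bounded above and below by absolute constants, and contraction in projective distance gives no control on eigenvalues near zero. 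The Lebesgue-invariance idea you mention for the lower bound is the right kind of input, but as stated it is essentially a restatement of what has to be proven (the Jacobian of the return map in period coordinates is exactly what is being estimated), and you would need to carry out the disintegration along the stable/unstable directions and track the roof function to extract $|\det(B_x|_W)|\asymp\lambda_1(B_x)$ cleanly.

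The paper avoids computing $\det(B_x|_W)$ altogether. It replaces the total-mass normalization $C\subset{\cal V}_0(\tau)$ by the intersection-form normalization $C_\zeta=\{\beta:\iota(\beta,\zeta)=1\}$, where $\zeta$ is the Perron eigenvector of the dual map $B_x^*$. Because $\zeta$ is an eigenvector, the renormalized projectivization becomes an exact rescaling by $a^{-1}=\lambda_1(B_x)^{-1}$ on the $n$-dimensional slice, hence has Jacobian $a^{-n}$ on the nose; the two graph maps $A_\zeta,A_{a^{-1}\zeta}$ that interpolate between the two normalizations are shown to have uniformly bounded first and second derivatives. This is a genuinely different decomposition from yours, designed precisely to sidestep the determinant. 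If you wish to keep your more direct linear-algebraic route, you would need to supply an independent proof that $|\det(B_x|_W)|\asymp\lambda_1(B_x)$ — the eigenvalue decomposition $W=\mathbb{R}\xi\oplus W_\zeta$ with $W_\zeta=\ker\iota(\cdot,\zeta)$ being $B_x$-invariant (since $\zeta$ is a $B_x^*$-eigenvector) is the right setup, but you then still need an argument bounding $|\det(B_x|_{W_\zeta})|$ above and below by constants independent of $x$.
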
  
\begin{proof}
%
%
Let $x=(x_p)_{0\leq p\leq s}\in {\cal A}$. By duality, the linear map
$B_x^*:{\cal V}^*(x_{s-k})\to {\cal V}^*(x_{s-k})$ defined by composition of the identification
of $x_{s-k}$ with $x_0$ with a carrying map $x_0^*\to x_{s-k}^*$ (see \cite{PH92})
is Perron Frobenius. Let $\xi\in {\cal V}_0(x_0)$ be the normalized
Perron Frobenius eigenvector of $B_x$ and let $\zeta\in {\cal V}^*(x_0)\subset
{\cal V}^*(x_{s-k})$ be the Perron Frobenius eigenvector of $B_x^*$, chosen so that
$\iota(\xi,\zeta)=1$. Note that the
argument used to understand the Perron Frobenius eigenvector of $B_x$ also shows that  
the Perron Frobenius eigenvector of $B_x^*$ 
can be viewed as a positive transverse measure on $x_0^*$, equivalently a positive 
tangential measure on $x_0$.

As $B_x$ defines a pseudo-Anosov mapping class $\Psi$
(take a lift of the splitting sequence defining $x$ to the space of marked large numbered
train tracks 
and use the fact that as the train tracks are numbered, the endpoint identification defines a unique
mapping class), the projective 
measured laminations $[\xi],[\zeta]$ are the 
attracting and repelling projective measured lamination of the action of $\Psi$ on ${\cal P\cal M\cal L}$. 
In particular, if we put $a =e^{\omega(\xi)}$ then we have $\Psi(\xi)=a \xi$ and 
$\Psi(\zeta)=a^{-1}\zeta$, whence $a= \lambda_1(B_x)$ (compare the proof of Lemma \ref{eigenvalue}).

The manifold $C$ is an open cell in an affine space $\xi +W$ where $W$ is a linear
subspace of the kernel of the linear functional $\phi:v\to \phi(v)=v(\tau)$ on $V$.
Let $C_\zeta\subset \mathbb{R}_+C\subset {\cal V}(x_0)$ be defined by
\[C_\zeta=\{\beta/\iota (\beta,\zeta)\mid \beta\in C\}.\]
By convex linearity of the function $\beta \to \iota(\beta,\zeta)$, the set $C_\zeta$ is an open
cell in an affine space $\xi +W^\prime$ where $W^\prime$ is a linear subspace of 
the kernel of the linear functional $\alpha:v\in V \to \iota(v,\zeta) \in \mathbb{R}$. 
Note that as $\zeta$ is fixed
and is viewed as a tangential measure on $\tau=x_0$, the 
lack of continuity of the intersection form on signed measured laminations is not an issue here. 
Also, $C_\zeta$ is 
a graph over $C$ of the inverse of an affine function. 
The restriction of the standard euclidean
inner product on the vector space spanned by the weight functions on the branches of $\tau=x_0$ 
naturally induces a volume form $\psi$ on $C_\zeta$. 

Let $A_\zeta:C\to C_\zeta$ be the natural graph map $\beta \to \beta/\iota(\beta, \zeta)$. 
We claim that its first and second derivatives 
are uniformly bounded in norm, independent of $x$. Here as before, we use the norm
$\Vert \,\Vert$ induced by the inner product on $V$. 
Using convex linearity of the map $\beta\to \iota(\beta,\zeta)=\alpha(\beta)$  and the fact that
$C\subset \xi +W$ for $\xi\in C$, 
we compute
\begin{align*}\frac{d}{dt} \alpha^{-1}(\xi+tv) \cdot (\xi+tv)& 
= -\alpha^{-2}(\xi +tv)\alpha(v)\cdot (\xi +tv) +\alpha^{-1}(\xi+tv) \cdot v
 \text{ and }\\
\frac{d^2}{dt^2} \alpha^{-1}(\xi+tv) \cdot (\xi+tv)\vert_{t=0} &=
2\alpha^{-3}(\xi)\alpha(v)^2 \cdot \xi -2\alpha^{-2}(\xi) \alpha(v) \cdot v . \notag
\end{align*}
Since the restriction of $\alpha$ to $C$ is bounded from above and below by a universal positive constant, 
these derivatives are pointwise uniformly bounded in norm. The same argument 
also shows that the first and second 
derivatives of the inverse $A_\zeta^{-1}$ of $A_\zeta$ are also uniformly
bounded in norm.

The natural identification $x_0\to x_{s-k}$ which corresponds to the pseudo-Anosov mapping class $\Psi$ 
maps $\zeta$ to $a^{-1}\zeta$ and hence maps 
$C_\zeta\subset {\cal V}(x_0)$ to $C_{a^{-1}\zeta}\subset {\cal V}(x_{k-s})$. 
The carrying map ${\cal V}(x_{s-k})\to {\cal V}(x_0)$ can be thought of as an inclusion of positive
cones in a linear space. 
Renormalizing a point $\beta \in C_{a^{-1}\zeta}$ to be contained in $C_\zeta$ amounts to 
scaling the measured lamination $\beta$ with the fixed constant $a^{-1}$ by convex bilinearity of the 
intersection form. But this means that this renormalization scales the volume form on 
$C_{a^{-1}\zeta}$ with the constant $a^{- {\rm dim}(C)}=e^{-n \omega(\xi)}=
\lambda_1(B_x)^{-n}$.

As the volume of $C$ with respect to the measure ${\rm vol}$ 
is positive and fixed and the volume distortion of the map $C\to C_\zeta$ is bounded from
above and below by a constant not depending on $x$, 
we obtain from the above discussion the existence of a number
$\theta>1$ so that 
\[{\rm vol}(C(x))\in [\theta^{-1} \lambda_1(B_x)^{-n},\theta \lambda_1(B_x)^{-n}].\]
This shows the first part of the proposition.

The second part also follows from this estimate. Namely, decompose
\[J(x)= {\rm Jac}(A_{\zeta}^{-1})\cdot {\rm Jac}(U)\circ {\rm Jac}(A_{a^{-1}\zeta})\]
(read from right to left) 
where $U:C_{a^{-1}\zeta} \to C_{\zeta}$ is induced by the carrying map 
as described above and hence has constant Jacobian $a^{-n}$. Thus taking 
logarithms 
shows that $\log J(x)= \log {\rm Jac}(A_{a^{-1}\zeta}) - n\log a + \log{\rm Jac}(A_\zeta)^{-1} \circ (U\circ A_{a^{-1}\zeta})$. 
By the beginning of this proof, we know that the differentials of 
$A_{a^{-1}\zeta}$ and $A_\zeta^{-1}$ are 
uniformly bounded in norm, independent of $x$, and the same holds true for
$\log {\rm Jac}(A_{\zeta}^{-1})$ as a function on $C_{\zeta}\subset {\cal V}_0(\tau)$. 
As furthermore by Lemma \ref{smoothcontrol} (and its proof), 
the differential of the map $ U \circ A_{a^{-1}\zeta} $ contracts 
norms, this completes the proof of the proposition.
\end{proof}

The following corollary summarizes what we have established so far.

\begin{corollary}\label{expandingmark}
The map $H=\cup_xH_x:\cup_x C(x)\to C$ 
has the following properties.
\begin{enumerate}
\item The interiors of the sets $C(x)$ $(x\in {\cal A})$ define a partition of 
a full measure sets of the Finsler manifold $(C,\Vert \,\Vert_{\rm sup})$ into open sets.
\item For each $x\in {\cal A}$, the map $H_x:C(x)\to C$ is a $C^1$-diffeomorphism between
$C(x)$ and $C$, and there exists constants $\kappa>1$ and $c_x>\kappa$ such that
\[\kappa \Vert v\Vert_{\rm sup} \leq \Vert DH_x \cdot v\Vert_{\rm sup} \leq c_x \Vert v\Vert_{\rm sup}.\]  
\item For each $x\in {\cal A}$, the  function
 $\log J(x)\circ H_x^{-1}$ on $C$ is of class $C^1$, and 
\[\Vert D (\log J(x)\circ H_x^{-1})\Vert_{C^0(C)}\leq \kappa^\prime\]
for a universal constant $\kappa^\prime>0$.
\end{enumerate}
\end{corollary}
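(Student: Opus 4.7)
The approach is essentially bookkeeping: each of the three conditions in Definition 2.2 of \cite{AGY06} has already been established in the preceding lemmas and proposition, so the proof proposal amounts to citing each in turn and checking that the structures (partition, Finsler metric, volume form) are consistently set up across the different statements.

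First I would verify condition (1) by invoking Lemma \ref{compatibility}. That lemma was proved precisely to establish that the sets $\{C(x)\}_{x\in{\cal A}}$ form a partition of an open subset of $C$ of full Lebesgue measure, and that each restriction $H_x:C(x)\to C$ is a $C^1$-diffeomorphism. The volume form ${\rm vol}$ is induced by the restriction to $C$ of the euclidean inner product on $V$ for which the indicator weight functions $f_b$ are orthonormal; this is also the inner product that defines the Finsler metric $\Vert\,\Vert_{\rm sup}$, so the measure classes agree and ``full measure'' has the same meaning in both settings.

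Second, condition (2) is a direct restatement of Lemma \ref{smoothcontrol}: there is a uniform lower bound $\kappa>1$ on the expansion factor of $DH_x$ in the norm $\Vert\,\Vert_{\rm sup}$, and for each $x\in{\cal A}$ a letter-dependent upper bound $c_x>\kappa$. Third, condition (3) is precisely the second part of Proposition \ref{volumeestimate}, which asserts that the differential of $\log J(x)\circ H_x^{-1}$ is uniformly bounded in $C^0$ by a constant $\kappa'$ independent of $x$. Replacing $\kappa$ by the maximum of the $\kappa$ of Lemma \ref{smoothcontrol} and the inverse of the $\kappa$ of Proposition \ref{volumeestimate} (and renaming the latter bound $\kappa'$) gives the statement in the form displayed in the corollary.

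There is no real obstacle, since all the analytic content is already packaged in the preceding statements; the genuine technical work, namely the control of the Jacobian via the Perron--Frobenius eigenvalue $\lambda_1(B_x)$ and the graph map $A_\zeta$, was dispatched in the proof of Proposition \ref{volumeestimate}, and the contraction estimate in the Finsler norm was dispatched in Lemma \ref{smoothcontrol}. The only point worth a second glance is that the three invoked statements all refer to the same partition of the same manifold $C$ and to the same norm and volume, which is guaranteed by the fact that they share the common setup fixed at the beginning of the section in terms of the sequence $(\sigma_j)_{0\leq j\leq k}$ and the cone $A(\sigma_k)$.
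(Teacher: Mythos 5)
Your proposal matches the paper exactly: the corollary carries no proof and is explicitly presented as a summary of Lemma \ref{compatibility} (partition and diffeomorphism property), Lemma \ref{smoothcontrol} (expansion bounds in $\Vert\,\Vert_{\rm sup}$), and part (2) of Proposition \ref{volumeestimate} (uniform $C^0$ bound on $D(\log J(x)\circ H_x^{-1})$), so your checklist is the proof. One small inaccuracy worth fixing: the Finsler metric $\Vert\,\Vert_{\rm sup}$ is \emph{not} induced by the euclidean inner product on $V$ with the $f_b$ orthonormal; it is the weighted sup norm $\Vert\alpha\Vert_{\rm sup}=\max_b\vert\alpha(b)\vert/\nu(b)$, which is pointwise dependent on $\nu$. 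The measure classes nevertheless agree because $C\subset{\cal P}(\tau,\chi)$, so the denominators $\nu(b)$ are bounded between $\chi$ and $1$ and the Finsler norm is uniformly bi-Lipschitz to the euclidean one on $C$; that is the correct justification for the ``full measure'' statement being independent of the choice.
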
 

As a consequence of Corollary \ref{expandingmark}, the map $H:\cup_xC(x)\to C$ 
is an expanding Markov map in the sense of Definition 2.2 of \cite{AGY06}.

We will use the following well known fact on expanding Markov maps (see Chapter 4 of \cite{Aa97} for
a detailed account on this line of ideas). 

\begin{proposition}\label{volumepreserve}
An expanding Markov map preserves a unique absolutely continuous measure $\psi$ with 
$C^1$-density that is bounded from above and below. The measure $\psi$ is ergodic. 
\end{proposition}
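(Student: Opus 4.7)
The plan is to analyze the Perron--Frobenius transfer operator
\[
(\mathcal{L}f)(y) = \sum_{x \in \mathcal{A}} J(x)(H_x^{-1}y) \, f(H_x^{-1}y), \qquad f\in C(C),
\]
which by the change of variables formula is the $L^1({\rm vol})$-adjoint of $f\mapsto f\circ H$. A positive fixed point $h$ of $\mathcal{L}$ with $\int h\,d{\rm vol}=1$ is then precisely the density of an $H$-invariant probability measure in the ${\rm vol}$-class.

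The main technical input is to upgrade the one-step distortion estimate in part (3) of Corollary \ref{expandingmark} to a uniform $n$-step estimate. For an admissible word $\mathbf{x}=(x_1,\dots,x_n)\in \mathcal{A}^n$ write $H_{\mathbf{x}}=H_{x_1}\circ\cdots\circ H_{x_n}$ and let $J(\mathbf{x})$ be its inverse Jacobian. Then $\log J(\mathbf{x})\circ H_{\mathbf{x}}^{-1}$ decomposes as a sum of $n$ terms of the form $\log J(x_j)\circ H_{x_j}^{-1}$ pulled back by iterates of $H$. Combining the uniform $C^0$-bound on the differential of each summand with the contraction $\Vert DH_x^{-1}\cdot v\Vert_{\rm sup}\leq \kappa^{-1}\Vert v\Vert_{\rm sup}$ from part (2), a telescoping argument bounds the Lipschitz constant of $\log J(\mathbf{x})\circ H_{\mathbf{x}}^{-1}$ by $\kappa'\sum_{j\geq 0}\kappa^{-j}$, a constant $M$ independent of $n$ and $\mathbf{x}$.

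With this distortion bound in hand, I would show that the sequence $\mathcal{L}^n\mathbf{1}$ is uniformly bounded above and away from zero in $C^0$ and uniformly Lipschitz on $C$. Arzel\`a--Ascoli applied to a Ces\`aro average then extracts a subsequential limit $h=\lim_k N_k^{-1}\sum_{n<N_k}\mathcal{L}^n\mathbf{1}$ which is a positive Lipschitz function satisfying $\mathcal{L}h=h$; its $C^1$-regularity is inherited a posteriori from iterating $h=\mathcal{L}^n h$ and from the fact that each $J(\mathbf{x})$ is $C^1$ with uniformly controlled logarithmic derivative. After normalisation this is the density of an absolutely continuous invariant probability measure $\psi$ bounded above and below.

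For uniqueness and ergodicity I would appeal to Birkhoff's theorem on contraction in the Hilbert projective metric. The distortion bound implies that $\mathcal{L}$ maps the cone
\[
\mathcal{K}=\bigl\{f\in C^0(C): f>0,\ f(y_1)\leq e^{M\Vert y_1-y_2\Vert_{\rm sup}}f(y_2)\text{ for all }y_1,y_2\bigr\}
\]
strictly into itself with image of finite Hilbert diameter, whence $\mathcal{L}$ contracts the Hilbert metric on $\mathcal{K}$ by a factor $<1$. This forces uniqueness of the normalised fixed density, and the resulting exponential decay of correlations on Lipschitz observables gives ergodicity of $\psi$. The only step that requires real work is the $n$-step distortion estimate; once that is in place, everything else is the standard Perron--Frobenius machinery as recalled in Chapter 4 of \cite{Aa97}.
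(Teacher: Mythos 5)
The paper does not actually prove this proposition: it asserts it as a well-known fact, referring to Chapter 4 of Aaronson's book, so there is no proof in the paper against which to compare. Your argument is exactly the standard transfer-operator / bounded-distortion proof that this citation delegates to, and it is correct in its essentials: the uniform $n$-step distortion constant $M=\kappa'\kappa/(\kappa-1)$ obtained by telescoping Corollary \ref{expandingmark}(3) against the $\kappa^{-1}$-contraction of $H_x^{-1}$, the Ces\`aro/Arzel\`a--Ascoli construction of the fixed density, and the Birkhoff cone contraction for uniqueness and mixing, hence ergodicity (using that $C$ has finite $\Vert\,\Vert_{\rm sup}$-diameter because $C\subset\mathcal{P}(\tau,\chi)$, so that $\mathcal{L}\mathcal{K}$ has finite Hilbert diameter). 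The only step you treat somewhat briefly is the upgrade from Lipschitz to $C^1$ regularity of the density: the Arzel\`a--Ascoli limit of $\mathcal{L}^n\mathbf{1}$ is a priori only Lipschitz, and to see the fixed point is $C^1$ one should iterate the functional equation $h=\mathcal{L}^n h$, observe that in $D(\mathcal{L}^n h)$ the $h$-dependent term carries a factor $DH_{\mathbf{x}}^{-1}$ of norm at most $\kappa^{-n}$ and so vanishes in the limit, and note that the remaining series involving $D(\log J(\mathbf{x})\circ H_{\mathbf{x}}^{-1})$ converges uniformly by the same telescoping estimate. This is a routine technicality and does not affect the soundness of the approach; in any case the applications in Sections \ref{sec:control} and \ref{sec:exp} only use that the density is bounded above and below and that $\psi$ is ergodic.
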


In the context at hand, absolutely continuous means in the Lebesgue measure class 
with respect to the manifold structure on $C$, that is, $\psi= f {\rm vol}$ for a 
$C^1$-function $f$ with values in some compact interval $[a,b]\subset (0,\infty)$.

\section{Control of the roof function}\label{sec:control}

Our goal is to use Corollary \ref{expandingmark} to 
establish exponential mixing for the Teichm\"uller flow $\Phi^t$ on 
the affine invariant manifold ${\cal C}$. To this end we have
to control the roof function $\omega$ which enters the definition of the suspension 
$(Z,\Psi^t)$.  As this roof function 
only depends on the future,
it induces a function on the one-sided shift $(\Xi^+,\beth)=P^+(\Xi,\beth)$ where 
$P^+:\Xi\to \Xi^+$ is the forgetful map $(y_i)_{i\in \mathbb{Z}}\to (y_i)_{i\geq 0}$.
We next establish the control of this roof function we need.

As before, denote by $V$ the vector space of weight functions on
$\tau$ satisfying the switch conditions.
For $x=(x_i)_{0\leq i \leq s}\in {\cal A}$ (recall that $x_0=\tau)$ 
let $h_x:{\cal P}(x_{s-k})\to \mathbb{R}$ be the
function which associates to $\mu\in {\cal P}(x_{s-k})\subset {\cal V}_0(x_{s-k})$ 
the logarithm $h_x(\mu)=\log \mu(x_0)$ 
of the mass that $\mu$ deposits on $x_0$. Let $p>0$ be the number of branches of 
a train track $\eta\in {\cal L\cal L}(Q)$. The following 
observation is similar to Proposition \ref{volumeestimate}. 

\begin{lemma}\label{lipschitz}
The function $h_x$ is $p$-Lipschitz continuous with respect to the 
Finsler metric $\Vert \,\Vert_{\rm sup}$. 
\end{lemma}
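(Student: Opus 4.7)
The plan is to bound the differential of $h_x$ pointwise by the Finsler norm, then integrate along paths. First I would write $\mu(x_0)$ explicitly in terms of the carrying data. A carrying map $x_{s-k}\to x_0$ sends each branch $b$ of $x_{s-k}$ to a smooth path in $x_0$; let $m(b)\ge 0$ denote the total number of branches of $x_0$ crossed, with multiplicity, by the image of $b$. Linearity of the push-forward of transverse measures then yields the identity
\[\mu(x_0)=\sum_b m(b)\,\mu(b),\]
where the sum is over branches of $x_{s-k}$. Hence $\mu \mapsto \mu(x_0)$ is a non-negative linear functional on ${\cal V}(x_{s-k})$, strictly positive on ${\cal P}(x_{s-k})$ since the elements of ${\cal P}(x_{s-k})$ assign positive mass to every branch.

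Next I would compute the derivative of $h_x$. For $\mu\in {\cal P}(x_{s-k})$ and a tangent vector $\alpha$ at $\mu$ (a signed weight function on $x_{s-k}$ satisfying the switch conditions with $\alpha(x_{s-k})=0$), the formula above gives
\[dh_x|_\mu(\alpha)=\frac{\alpha(x_0)}{\mu(x_0)}=\frac{\sum_b m(b)\,\alpha(b)}{\sum_b m(b)\,\mu(b)}.\]
By the definition of the Finsler norm, $|\alpha(b)|\le \Vert\alpha\Vert_{\rm sup}\,\mu(b)$ for every branch $b$ of $x_{s-k}$. Weighting this pointwise inequality by $m(b)\ge 0$ and summing, the numerator is dominated in absolute value by $\Vert\alpha\Vert_{\rm sup}$ times the denominator, so
\[\bigl|dh_x|_\mu(\alpha)\bigr|\le \Vert\alpha\Vert_{\rm sup}.\]

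Integrating this pointwise bound along smooth paths in the path-connected open set ${\cal P}(x_{s-k})$ shows that $h_x$ is $1$-Lipschitz with respect to the Finsler distance induced by $\Vert\cdot\Vert_{\rm sup}$, and therefore in particular $p$-Lipschitz for the stated constant $p$, where $p$ is the number of branches of a train track in ${\cal L\cal L}(Q)$ (equal to the number of branches of $x_{s-k}$, since splitting preserves the branch count).

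The only real verification point is the combinatorial identity $\mu(x_0)=\sum_b m(b)\,\mu(b)$, together with the non-negativity of the weights $m(b)$: this non-negativity is what allows the branch-by-branch bound $|\alpha(b)|\le \Vert\alpha\Vert_{\rm sup}\,\mu(b)$ to be pushed through the sum without any loss, independently of $x\in {\cal A}$. No compactness or Perron--Frobenius input is needed, in contrast with the more delicate Lebesgue volume estimates of Proposition \ref{volumeestimate} that this lemma parallels.
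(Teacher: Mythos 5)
Your proof is correct, and it takes a genuinely different and somewhat cleaner route than the paper's. The paper argues via finite differences: it writes $\mu(x_0)=\nu(x_0)+\alpha(x_0)$, bounds the ratio $\mu(x_0)/\nu(x_0)$ by $1+p\max_{e\subset x_0}|\alpha(e)|/\nu(e)$, and then crucially invokes Lemma~4.5 of \cite{H11} — the statement that the projective sup-metric is contracted under the carrying map $x_{s-k}\to x_0$ — to pass from the sup over branches of $x_0$ to the sup over branches of $x_{s-k}$. You avoid that carrying-contraction input entirely by observing that $\mu\mapsto\mu(x_0)$ is the explicit non-negative linear form $\sum_b m(b)\mu(b)$ on weight functions for $x_{s-k}$; then the branch-by-branch inequality $|\alpha(b)|\le\Vert\alpha\Vert_{\rm sup}\,\mu(b)$, weighted by $m(b)\ge 0$ and summed, bounds the logarithmic derivative directly. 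This is more elementary (no appeal to an external contraction lemma) and yields the sharper Lipschitz constant $1$, from which $p$-Lipschitz follows a fortiori since $p\geq 1$. One small point worth noting, though it does not affect correctness: integrating the pointwise derivative bound requires the path to remain in ${\cal P}(x_{s-k})$, which holds because ${\cal P}(x_{s-k})$ is convex (an intersection of an affine slice with open half-spaces), so the straight segment between any two points suffices.
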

\begin{proof} Let $\mu,\nu\in {\cal P}(x_{s-k})\subset {\cal V}_0(x_{s-k})$ and 
put $\alpha= \mu-\nu$. Then
\[\mu(x_0)=\sum_{e\subset  x_0}\mu(e)=\nu(x_0) +\sum_{e\subset x_0}\alpha(e)\]
and hence 
\[\frac{\mu(x_0)}{\nu(x_0)}\leq 1+\frac{\sum_e \vert \alpha(e)\vert}{\nu(x_0)}
\leq 1+p \max_{e\subset x_0} \frac{\vert \alpha(e)\vert}{\nu(e)}.\]

However, Lemma 4.5 of \cite{H11} shows that 
\[\max_{e\subset x_0}\frac{\vert \alpha(e)\vert}{\nu(e)} \leq \max_{b\subset x_{s-k}} \frac{\vert \alpha(b)\vert}{\nu(b)}\]
and hence
we have 
\[\vert \log \mu(\tau_0)-\log\nu(\tau_0)\vert \leq
 \log (1+ p\max\{\frac{\vert \alpha(b)\vert}{\nu(b)}\mid b \subset x_{s-k}\}).\]
Since 
$\lim_{t\to 0}\frac{\log(1+t)}{t}=1$ we conclude that indeed, the function $h_x$ is 
$p$-Lipschitz for the Finsler metric $\Vert \, \Vert_{\rm sup}$. 
\end{proof}

Using the notations from Section \ref{sec:ex} we have

\begin{lemma}\label{roofbound}
There are numbers $r_1>0,r_2>0$ with the following property.
\begin{enumerate}
\item $\omega\geq r_1$.
\item For every $x\in {\cal A}$, 
the function $\omega\circ H_{x}^{-1}$ is 
differentiable, and the norm with respect to the  
Finsler metric $\Vert\,\Vert_{\rm sup}$
of its derivative 
is pointwise bounded from above by $r_2$.
\item It is not possible to write $\omega=\alpha+\beta\circ T_+-\beta$
where $\alpha$ is locally constant and 
$\beta$ is piecewise of class $C^1$.
\end{enumerate}
\end{lemma}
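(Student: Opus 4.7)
The three claims are of quite different character, and I would treat them separately. Part (1) follows immediately from the definition $\omega(z)=\sum_{u=0}^{\ell-1}\phi(T^uE(z))$ with $\ell\geq 1$, together with the positive lower bound on $\phi$ noted after Lemma \ref{variation}; this bound descends in turn from the strict positivity of the roof $\rho$ of Theorem \ref{coding} on the finite alphabet ${\cal E}(Q)$.

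For Part (2), I would first unwind the telescoping of $\phi$ from $\rho=\log(\mu(x_0)/\mu(x_1))$ to obtain, on the cylinder $[x]\subset\Xi^+$ corresponding to a splitting sequence $(x_0,\ldots,x_m)\subset{\cal E}(Q)$ with $x_0=\tau$ and $x_{m-k}$ canonically identified with $\tau$ via the concluding segment, the closed-form expression $\omega=\log\mu(x_0)-\log\mu(x_{m-k})$ for the (normalized) vertical measured lamination $\mu$. Transferring this to the coordinate $\nu=H_x(\mu)\in C$ by unpacking the definition of the linear carrying map $B_x$ yields $\omega\circ H_x^{-1}(\nu)=\log B_x(\nu)(\tau)$, which is consistent both with Lemma \ref{eigenvalue} and with the fixed-point analysis in the proof of Proposition \ref{volumeestimate}. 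The differential of this function at $\nu$ in a tangent direction $v\in W_0$ is $B_x(v)(\tau)/B_x(\nu)(\tau)$, and the estimate $|v(b)|\leq\|v\|_{\rm sup}\nu(b)$ for every branch $b$ combined with the nonnegativity of the entries of $B_x$ in the vertex-cycle basis yields $|B_x(v)(\tau)|\leq\|v\|_{\rm sup}B_x(\nu)(\tau)$. Hence the derivative of $\omega\circ H_x^{-1}$ is bounded by $1$ in the Finsler metric $\|\cdot\|_{\rm sup}$ uniformly in $x$, which gives Part (2). Equivalently, the same bound follows by applying Lemma \ref{lipschitz} to $h_x(\mu)=\log\mu(x_0)$ on ${\cal P}(x_{m-k})$ and transporting it through the carrying map.

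For Part (3), I would argue by contradiction. Suppose $\omega=\alpha+\beta\circ T_+-\beta$ with $\alpha$ locally constant (depending only on the first $N$ coordinates for some $N$) and $\beta$ piecewise $C^1$. For any periodic point $y\in\Xi^+$ with $T_+^n y=y$ and $n>N$, the $\beta$-coboundary telescopes and
\[
\sum_{i=0}^{n-1}\omega(T_+^iy)=\sum_{i=0}^{n-1}\alpha(T_+^iy).
\]
The left-hand side equals the logarithmic dilatation $\log\lambda_1(B_{y_0}B_{y_1}\cdots B_{y_{n-1}})$ of the pseudo-Anosov mapping class associated to the cyclic word $y_0\cdots y_{n-1}$ (recall that since $\beth$ has no transition restrictions on ${\cal A}$, every such word is admissible), while the right-hand side depends only on the $N$-block multi-set of this cyclic word. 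A contradiction will follow from exhibiting two admissible cyclic words of common length with the same $N$-block multi-set but different logarithmic dilatations. The main obstacle is producing such pairs: for $N=1$, letters $a,b\in {\cal A}$ with non-commuting Perron--Frobenius matrices $B_aB_b\neq B_bB_a$ suffice, since the dominant eigenvalues of $(B_aB_b)^m$ and $B_a^mB_b^m$ diverge at a linear rate in $m$; for general $N$, I would combine a de Bruijn-type combinatorial argument producing two non-cyclically-equivalent words with identical $N$-block multi-sets with a genericity argument on the family $\{B_x\}_{x\in {\cal A}}$ (exploiting the freedom in extending the splitting sequences defining ${\cal A}$ by Lemma \ref{period}) to ensure that the spectral radii of the two resulting products separate. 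This is the translation to the train-track coding of the non-arithmeticity arguments of \cite{AGY06}, carried out there in the setting of Rauzy--Veech induction.
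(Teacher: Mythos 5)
Parts (1) and (2) of your argument match the paper's proof. Your closed form $\omega\circ H_x^{-1}(\nu)=\log B_x(\nu)(\tau)$ is correct, and the resulting pointwise derivative bound in $\Vert\cdot\Vert_{\rm sup}$ is the same estimate the paper obtains by routing through Lemma \ref{lipschitz}. (One small slip: the nonnegativity of the entries of $B_x$ that you use is in the branch basis, where carrying maps have nonnegative matrices, not in the vertex-cycle basis as you wrote.)

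Part (3) is where you depart from the paper, and where there is a genuine gap. The paper follows Lemma 4.5 of \cite{AGY06}: differentiate the coboundary identity along compositions $h^n$ of inverse branches, observe that $D(\omega\circ h^n)\to 0$ forces the projective sequence $[B_x^n\cdot b]$ to converge to a limit $[w]$ that is the \emph{same for every} $x\in{\cal A}$, hence $[w]$ is a common positive Perron--Frobenius eigenvector of all the $B_x$; this contradicts the fact that the PF eigenvector of $B_x$ is the attracting vertical lamination of the pseudo-Anosov class coded by the periodic word $x^\infty$, and distinct periodic orbits of the Teichm\"uller flow have distinct vertical measured laminations. Your route instead telescopes $\beta$ along periodic orbits and reduces the contradiction to exhibiting two cyclic words of the same length, with identical $N$-block multisets, whose associated Perron--Frobenius products have different spectral radii. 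You correctly identify the production of such a pair as ``the main obstacle,'' but your plan to close it does not work as stated. The matrices $B_x$ are rigidly determined by the full numbered splitting sequences defining ${\cal A}$; they are not free parameters, so a ``genericity'' perturbation ``exploiting the freedom in extending the splitting sequences'' is not available. Even in the model case $N=1$, your assertion that noncommutativity of $B_a$ and $B_b$ makes $\log\lambda_1((B_aB_b)^m)$ and $\log\lambda_1(B_a^mB_b^m)$ diverge linearly amounts to claiming $\lambda_1(B_aB_b)\neq\lambda_1(B_a)\lambda_1(B_b)$, which does not follow from $B_aB_b\neq B_bB_a$ alone. The paper's argument sidesteps all of this because it only needs that the PF \emph{eigenvectors} of distinct $B_x$ differ, which is automatic from the bijection between periodic words and periodic Teichm\"uller orbits, whereas you need the strictly stronger (and, as posed, unproved) separation of \emph{eigenvalues} of long products. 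I would replace the dilatation-separation scheme with the paper's eigenvector-distinctness argument.
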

\begin{proof} The first part of the lemma is immediate
from the construction (see Section \ref{sec:shift} for details).

Part (2) follows from Lemma \ref{lipschitz}. Namely, for 
$x=(x_i)_{0\leq i\leq s}\in {\cal A}$ 
the function $\omega \circ H_x^{-1}$ is just the 
function which associates to $\mu\in C\subset {\cal P}(x_{s-k})\subset
{\cal V}_0(x_{s-k})$ the logarithm of the total mass that 
$\mu$ deposits on $x_0$ by the carrying map $x_{s-k}\to x_0$. 
That this function is Lipschitz continuous was established in 
Lemma \ref{lipschitz}.

For the proof of (3) we follow the proof of Lemma 4.5 of \cite{AGY06}. 
Namely, suppose that (3) is not true. Then there exists
a locally constant function $\psi$ and a piecewise $C^1$-function $\zeta$ so that
$\omega=\psi +\zeta \circ T-\zeta$. Write $\omega^{(n)}=\sum_{j=0}^{n-1} \omega(\beth^j)$.

Denote as before by $V$ the vector space of weight functions on
$\tau$ satisfying the switch conditions, spanned by $b_1,\dots,b_u$. 
Then $b=\sum_jb_j$ can be thought of as a positive (by recurrence) transverse measure on $\tau$.
For $x=(x_j)_{0\leq j\leq \ell}\in {\cal A}$ consider as before the Perron Frobenius map
$B_x:V\to V$ obtained from a composition of the natural identification
$x_0\to x_{\ell-k}$ with the carrying map $x_{\ell-k}\to x_0$ and
let $h$ be the inverse branch of the shift $\Xi^+\to \Xi^ +$, corresponding to the letter
$x\in {\cal A}$.
By assumption, we have 
$D(\omega^{(n)}\circ h^n)=D\omega- D(\omega\circ h^n)$, which can be rewritten as
\[\frac{ \Vert (B^*_{x})^n \cdot v\Vert }{\Vert (B^*_x)^n \cdot \mu\Vert} =
D\omega(\mu) \cdot v-D(\omega \circ h^n)(\mu) \cdot v \quad (\mu \in [x])\]
or 
\[ \frac{\langle v, B^n_x \cdot b\rangle}{\langle \mu, B^n_x \cdot b\rangle }=
D\phi (\mu) \cdot v-D(\phi \circ h^n) (\mu) \cdot v.\] 
Since $Dh^n\to 0$, it follows that $[B_x^n \cdot b]\in P\mathbb{R}^p$ converges 
to a limit $[w]\in P\mathbb{R}^p$ independent of $h$. This implies that 
$[w]\in P\mathbb{R}^p$ is invariant by all $B_x$, $x\in {\cal A}$. 
Since $w$ is a limit of positive vectors (vectors with positive coordinates), by the 
Perron-Frobenius theorem, $w$ is collinear with the (unique) positive eigenvector 
of $B_x$, which is an eigenvector with respect to the Perron Frobenius eigenvalue.

Now note that as $[w]$ is independent of $x$, the Perron Frobenius eigenvector
of $B_x$ does not depend on $x$. But this violates the fact that the Perron 
Frobenius eigenvalue of $B_x$ equals the vertical measured geodesic lamination
of the periodic point corresponding to the periodic word only containing the letter $x$. 
Since for any two distinct periodic points in ${\cal Q}\supset {\cal C}$ 
these vertical measured geodesic laminations are distinct, this is a contradiction.
\end{proof}

In \cite{AGY06}, the notion of a good roof function was defined as a roof function with 
the properties stated in Lemma \ref{roofbound}.

\begin{corollary}\label{good}
The one-sided Markov shift $(\Xi^+,\beth)$ defines a uniformly expanding Markov map, 
and $\omega$ is a good roof function.
\end{corollary}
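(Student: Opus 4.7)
The plan is essentially to package together the results already established in Section~\ref{sec:ex} and the present section and check that they match the precise formulations in Definition~2.2 and the definition of a good roof function in \cite{AGY06}. No further analytic work is needed; the corollary is a summary statement.

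First I would verify the expanding Markov property. The map $H=\cup_x H_x:\cup_x C(x)\to C$ is the inverse branch structure of the one-sided shift $(\Xi^+,\beth)$: each letter $x\in{\cal A}$ corresponds to an inverse branch $H_x^{-1}:C\to C(x)$. Corollary \ref{expandingmark} already verifies the three axioms of Definition 2.2 of \cite{AGY06}: (i) the sets $C(x)$ partition an open full-measure subset of the Finsler manifold $(C,\Vert\,\Vert_{\rm sup})$; (ii) each $H_x$ is a $C^1$-diffeomorphism with uniform lower expansion bound $\kappa>1$ and pointwise upper bound $c_x$ in the Finsler metric; (iii) the derivatives of $\log J(x)\circ H_x^{-1}$ are uniformly bounded in $C^0$ by a constant $\kappa^\prime$ independent of $x$. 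The uniformity of $\kappa$ and $\kappa^\prime$ is exactly what distinguishes the uniformly expanding case.

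Next I would verify that $\omega$ is a good roof function. First, $\omega$ only depends on the future coordinates, since $\phi$ does, so it descends to a well-defined function on $\Xi^+$. The three axioms of good roof function from \cite{AGY06} then correspond one-to-one with the three conclusions of Lemma \ref{roofbound}: the uniform positive lower bound $\omega\ge r_1$ is part (1); the pointwise Finsler bound $\Vert D(\omega\circ H_x^{-1})\Vert_{\rm sup}\le r_2$, which is the appropriate piecewise $C^1$ regularity statement for the one-sided shift with partition $\{C(x)\}_{x\in{\cal A}}$, is part (2); and the non-degeneracy condition that $\omega$ is not cohomologous (modulo a locally constant function) to a piecewise $C^1$ coboundary is part (3).

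There is no main obstacle: the substantive work was done in Lemmas \ref{smoothcontrol}, \ref{eigenvalue}, \ref{lipschitz}, Proposition \ref{volumeestimate}, and Lemma \ref{roofbound}. The only thing worth being careful about in writing up the proof is to state explicitly the match between the bounds $(\kappa,\kappa',r_1,r_2)$ and the constants appearing in \cite{AGY06}, so that the hypotheses needed in Section \ref{sec:exp} for the application of the Avila--Gouezel--Yoccoz exponential mixing criterion are manifestly satisfied.
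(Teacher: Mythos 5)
Your proposal matches the paper's own (implicit) argument exactly: the corollary is indeed just a summary, with the uniformly expanding Markov map property coming from Corollary~\ref{expandingmark} and the good roof function property being, by definition, the content of Lemma~\ref{roofbound}. The paper states immediately before the corollary that the good roof function conditions in \cite{AGY06} are precisely those of Lemma~\ref{roofbound}, so no further argument is needed.
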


Let $\psi$ be the measure on $C$ in the Lebesgue measure class which is invariant under
the expanding Markov map $\cup_xC(x)\to C$ and whose existence is guaranteed by
Proposition \ref{volumepreserve}.
Following \cite{AGY06} we
say that the roof function
$\omega$ \emph{has exponential tails} if 
there is a number $\beta >0$ such that
\[\int_{C} e^{\beta\omega} d\psi <\infty.\]

The final goal of this section is to show.

\begin{proposition}\label{prop:extail}
The roof function $\omega$ has exponential tails.
\end{proposition}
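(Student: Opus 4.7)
The plan is to decompose the integral $\int_C e^{\beta \omega}\,d\psi$ according to the Markov partition $\{C(x)\}_{x \in \mathcal{A}}$, reduce the problem to a series in the Perron-Frobenius eigenvalues $\lambda_1(B_x)$, and then use an entropy-type counting argument in the spirit of \cite{S03} to establish convergence of this series for sufficiently small $\beta > 0$.

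First I would combine three facts established earlier: that $\psi = f \cdot {\rm vol}$ with $f$ bounded above and below (Proposition \ref{volumepreserve}), that $\omega|_{C(x)} = \log \lambda_1(B_x) + O(1)$ uniformly in $x$ (Lemma \ref{eigenvalue} together with the bounded variation of $\omega$ on cylinders from Lemma \ref{roofbound}(2)), and that ${\rm vol}(C(x)) \asymp \lambda_1(B_x)^{-n}$ (Proposition \ref{volumeestimate}(1)). These immediately yield
\[\int_C e^{\beta \omega} \, d\psi \;\asymp\; \sum_{x \in \mathcal{A}} \lambda_1(B_x)^{\beta - n}.\]
Writing $\omega_x = \log \lambda_1(B_x)$ and grouping symbols by size via $\mathcal{A}_T = \{x \in \mathcal{A} : \omega_x \in [T, T+1)\}$, the question reduces to summability of $\sum_{T \geq T_0} |\mathcal{A}_T| \, e^{(\beta - n)T}$, so it suffices to produce $\varepsilon > 0$ with $|\mathcal{A}_T| \leq C e^{(n - \varepsilon)T}$ for all $T$. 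Note that the a priori estimate $\sum_x {\rm vol}(C(x)) \leq {\rm vol}(C) < \infty$ only gives $|\mathcal{A}_T| = O(e^{nT})$ in an averaged sense, which sits exactly on the borderline of divergence and is insufficient by itself.

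The core step is the entropy gap. Each symbol $x \in \mathcal{A}$ corresponds to a first-return loop for the suspension over $(\Xi,\beth)$, and the periodic word $x^{\mathbb{N}}$ produces a pseudo-Anosov mapping class with dilatation precisely $\lambda_1(B_x) = e^{\omega_x}$. Consequently $|\mathcal{A}_T|$ is comparable up to bounded multiplicative error to the number of closed $\Phi^t$-orbits in $\mathcal{C}$ of length in $[T, T+1)$ passing through the local section used to define $\mathcal{A}$, a count which is controlled by the topological entropy of the Teichm\"uller flow on $\mathcal{C}$. Lemma \ref{roofbound}(3), asserting that $\omega$ is not cohomologous to a locally constant function, rules out the degenerate scenario in which the eigenvalues $\lambda_1(B_x)$ conspire to saturate the maximal counting rate; a large-deviation argument applied to the expanding Markov map $H:\bigcup_x C(x) \to C$ with its absolutely continuous invariant measure $\psi$, in the spirit of \cite{S03}, then delivers the required strict exponential gap $\varepsilon > 0$. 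With $\varepsilon$ in hand, choosing any $0 < \beta < \varepsilon$ yields $\sum_T |\mathcal{A}_T| e^{(\beta - n)T} \leq C \sum_T e^{(\beta - \varepsilon)T} < \infty$, completing the proof.

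The main obstacle is the production of the entropy gap: while strict positivity of $\varepsilon$ is classical for finite-state expanding Markov maps via standard Gibbs theory, in the countable-alphabet setting at hand it must be established from scratch, using the non-cohomology property of $\omega$ coupled with the bounded-density control on $\psi$. The role of Lemma \ref{roofbound}(3) is precisely to serve as the qualitative input feeding the large-deviation argument, and the entropy argument inspired by Schmeling is the appropriate device to upgrade this qualitative statement into a quantitative exponential refinement of the trivial bound $|\mathcal{A}_T| = O(e^{nT})$.
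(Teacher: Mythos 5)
Your reduction is correct and matches the paper: using Lemma \ref{eigenvalue}, Proposition \ref{volumeestimate}(1), and the bounded-above-and-below density of $\psi$ from Proposition \ref{volumepreserve}, one indeed has $\int_C e^{\beta\omega}\,d\psi \asymp \sum_{x\in\mathcal A}\lambda_1(B_x)^{\beta-n}$, and the whole issue is to show this series converges for some $\beta>0$. You have also correctly located the difficulty: the trivial bound $\sum_x \lambda_1(B_x)^{-n}\asymp \psi(C)<\infty$ sits exactly at the boundary and gives no room.

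Where the proposal breaks down is in the mechanism you offer for the gap. You invoke Lemma \ref{roofbound}(3) (non-cohomology of $\omega$ to a locally constant function) together with a ``large-deviation argument'' and an identification of $|\mathcal A_T|$ with a closed-orbit count. Neither of these is substantiated, and neither is what the paper does. The non-cohomology condition controls the \emph{phase} of the return time, not its \emph{tail}; it enters only through the UNI-type hypothesis needed for the spectral estimates of \cite{AGY06} in Section \ref{sec:exp}, and it plays no role in Proposition \ref{prop:extail}. Likewise the proposed identification of $|\mathcal A_T|$ with a count of closed $\Phi^t$-orbits of length $\approx T$ is shaky: the semi-conjugacy $F$ is finite-to-one but not injective, the symbols of $\mathcal A$ are first-return words rather than periodic orbits, and the relation between the two counts is precisely the kind of thing one would have to prove. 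As written, the proposal identifies the gap that must be closed but supplies no valid argument for closing it.

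The paper's actual argument is a Gurevich-pressure argument in the sense of \cite{S03}, and it is quite different from a counting/large-deviation argument. One first observes that the absolutely continuous invariant measure $\psi$ is the unique Gibbs measure for the potential $-n\omega$ on the one-sided shift, with Gurevich pressure $P_G(-n\omega)=0$; this is forced by Proposition \ref{volumeestimate}, since Lebesgue measure already satisfies the Gibbs cylinder estimate with $P=0$. Next one removes the finite set $\mathcal A_R$ of small letters, forming the proper subshift $\hat\Xi^+$ over $\hat{\mathcal A}=\mathcal A\setminus\mathcal A_R$. Any invariant measure on $\hat\Xi^+$ is one on $\Xi^+$, so $P_G(-n\hat\omega)\le 0$; but $P_G(-n\hat\omega)=0$ would produce a Gibbs measure for $-n\omega$ supported on $\hat\Xi^+$, contradicting the uniqueness and \emph{full support} of $\psi$. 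Hence $P_G(-n\hat\omega)<0$, while $P_G(0)>0$ on $\hat\Xi^+$ (there are invariant measures of positive entropy on compact subshifts), so by continuity of $s\mapsto P_G(-s\hat\omega)$ there is $\delta\in(0,n)$ with $P_G(-\delta\hat\omega)=0$. The associated equilibrium state then gives $\sum_{\hat x\in\hat{\mathcal A}}e^{-\delta\omega}<\infty$, which is exactly the summability you wanted with $\beta=n-\delta>0$. The key structural input that your proposal is missing is the full-support/uniqueness property of the Gibbs measure, not the non-cohomology of $\omega$.
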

\begin{proof}
  Consider as before the shift $(\Xi^+,\beth)$ and the function $-n\omega$ where
  $n\geq 1$ is the dimension of the manifold $C$.
  It satisfies $\Vert \sum_{\beth u=z}e^{-n\omega(u)}\Vert_\infty <\infty$. 
  Namely, the set of all inverse branches of a point $z\in \Xi^+$ admits a natural identification with
  the alphabet ${\cal A}$. For each $u=xz$ where $x\in {\cal A}$, 
  Lemma \ref{eigenvalue} and Lemma \ref{volumeestimate} show that 
  up to a universal multiplicative
  constant, the value of $e^{-n\omega(u)}$ coincides with $\psi(C(x))$. As 
  $\sum_{x\in {\cal A}} \psi(C(x))=\psi(C)<\infty$, the
  estimate follows.

  By Lemma \ref{variation}, the roof function $\phi$ on the shift space $(\Sigma,T)$ is of bounded
  variation. Since there exists a natural embedding $E:\Xi\to \Sigma$ mapping cylinder sets
  to cylinder sets so that
$\omega(y)=\sum_{i=0}^{\ell-1}\phi(T^i(E(y))$ where $\ell \geq 1$ only depends on the first letter 
of the one-sided infinite sequence $y$, 
the function $\omega$ is of bounded variation as well.  
Thus   
the \emph{Gurevich pressure} $P_G(-n\omega)$ 
of $-n\omega$ is defined by 
$P_G(-n\omega) =\sup \{h_\nu(\beth)- n \int \omega \,d\nu\mid  \nu $
an invariant Borel probability measure such that  $\int \omega d\nu<\infty\}$
and is finite \cite{S03}. Put $P=P_G(-n\omega)$.   

As a consequence \cite{S03}, 
there exists an invariant Gibbs measure for the Markov shift $(\Xi^+,\beth)$ 
which associates to a finite cylinder
set $[x_0,\dots,x_{\ell-1}] \subset \Xi^+$ $(x_i\in {\cal A})$ a volume contained in the interval
$[\theta^{-1} e^{-n\omega^{(\ell)}(z)- \ell P},\theta e^{-n\omega^{(\ell)}(z)-\ell P}]$ where
$\theta>1$ is a universal constant and 
$z$ is an arbitrary point in 
$[x_0,\dots,x_{\ell-1}]$. Recall to this end from 
Lemma \ref{eigenvalue} and its immediate extensions to finite cylinder sets 
that there exists a constant $p>0$ so that
$\vert \omega^{(\ell)}(z)-\omega^{(\ell)}(u)\vert \leq p$ for all $z,u\in [x_0,\dots,x_{\ell-1}]$
and that furthermore the volume of a cylinder $[x_0,\dots,x_{\ell-1}]$ equals $\omega^{(\ell)}(z)$ up to a universal 
multiplicative constant where $z\in [x_0,\dots,x_{\ell-1}]$ is arbitrary.

On the other hand, by Proposition \ref{volumeestimate}, the Lebesgue measure fulfills such an 
estimate for $P=0$. Namely, viewing $C(x)$ as the cylinder subset of $\Xi$  
defined by the letter $x\in {\cal A}$ up to a set of measure zero, 
the proof of Proposition \ref{volumeestimate} which relies on Perron Frobenius eigenvectors for the 
Perron Frobenius maps $B_x$ equally applies to any finite cylinder set in $\Xi$.
 Thus the invariant measure in the Lebesgue measure class equals the 
Gibbs equilibrium state of the function $-n\omega$, of vanishing pressure.

We use this and the fact that $\omega$ is essentially constant on any $C(x)$ to show the 
exponential tail property. Choose a number $R>0$ which is sufficiently large that the
finite set ${\cal A}_R=\{x\in {\cal A}\mid \omega(C(x))\leq R\}$ is non-trivial and
put $\hat {\cal A}={\cal A}\setminus {\cal A}_R$.  
The one-sided shift $(\hat \Xi^+,\beth)$ 
over $\hat {\cal A}$ is defined, and it is a $\beth$-invariant subspace of
$\Xi^+$. The roof function $\omega$ on $\Xi^+$ restricts to a roof function 
$\hat \omega$ on $\hat \Xi^+$. As $\hat \omega$ has the same properties as $\omega$, for 
every $a>0$ the Gurevich pressure of $-a\hat \omega$ is finite. Moreover, 
since the pressure of $-n\omega$ on $\Xi^+$ vanishes, and any 
$\beth$-invariant measure on $\hat \Xi^+$ also is a $\beth$-invariant measure on 
$\Xi^+$, the Gurevich pressure of $-n\hat \omega$ is non-positive.

We claim that this pressure is in fact negative. To see this assume otherwise. 
Then by \cite{S03}, $-n\hat \omega$ admits a unique Gibbs measure $\hat \psi$ 
on $\hat \Xi^+$. 
Via the embedding $\hat \Xi^+\to \Xi^+$, 
this measure can be viewed as a $\beth$-invariant measure on $\Xi^+$. 
As the Gibbs measure $\hat \psi$ of $-n\hat \omega$ 
is characterized by 
the equality $P_G(-n\hat \omega)=h_{\hat \psi}(\beth)- n\int \hat \omega d\hat \psi$ and 
we have $\int \hat \omega d\hat \psi=\int \omega d\hat\psi$, we conclude that
$\hat \psi$ is in fact the unique Gibbs measure for $-n\omega$. 
But the Gibbs measure of $-n\omega$ is of full support, a contradiction. 

We established so far that the pressure of $-n\hat \omega$ is negative. On the other hand,
the pressure of the constant function $0$ on $\hat \Xi^+$ is positive, 
which is equivalent to stating that there are invariant 
measures with positive entropy on $\hat \Xi$, for example
supported on an invariant compact subset. By continuity of the function $s\to P_G(-s \hat \omega)$
there exists a number $\delta \in (0,n)$ so that the pressure of
$-\delta \hat \omega$ on $\hat \Xi$ vanishes. 
The equilibrium state for this measure gives mass uniformly equivalent to
$e^{-\delta \hat \omega(z)}=e^{-\delta \omega(z)}$ to a cylinder set 
$[\hat x]$ for $\hat x\in \hat {\cal A}$ and $z\in [\hat x]$.
Here uniformly equivalent means that the 
ratio of these numbers is contained in a fixed
compact subinterval of $(0,\infty)$, and we  
 use once more the
fact that the variation of $\omega$ on each such cylinder set is uniformly bounded, independent
of the defining letter $\hat x\in \hat {\cal A}$.
But this means that
\begin{equation}\label{sum}
\sum_{\hat x\in \hat {\cal A}}e^{-\delta \omega(z)}<\infty\end{equation}
where as before, $z\in [\hat x]$ is an arbitrarily chosen point.

On the other hand, up to a positive multiplicative constant, 
the value of the sum (\ref{sum})
can be identified with 
the integral of the function $e^{(n-\delta)\omega}$ over $\cup_{x\in \hat {\cal A}}C(x)$
with respect to the Lebesgue measure $\psi$. As
${\cal A}-\hat {\cal A}$ is finite, and the function $\omega$ is bounded on each cylinder set, 
this implies that
  $\int_C e^{(n-\delta)\omega} d\psi <\infty$, which is what we wanted to show.
\end{proof}

\section{Exponential mixing}\label{sec:exp}

In this section we complete the proof of the main result from the introduction.
Consider the suspension of the Markov shift $(\Xi^+,\beth)$ by the 
roof function $\omega$. This suspension admits an invariant measure
$\tilde \psi$ defined by $d\tilde \psi=d\psi\times dt$.

\begin{lemma}\label{finite}
The measure $\tilde \psi$ on the suspension space is finite.
\end{lemma}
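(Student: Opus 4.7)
The plan is to reduce finiteness of $\tilde\psi$ to integrability of the roof function with respect to $\psi$. By the very definition of the suspension space and the product structure $d\tilde\psi=d\psi\times dt$, Fubini's theorem gives
\[
\tilde\psi(Z)=\int_{\Xi^+}\omega(y)\,d\psi(y),
\]
so it is enough to show $\omega\in L^1(\psi)$.

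First I would observe that $\psi$ is itself a finite measure on $C$: by Proposition \ref{volumepreserve}, $\psi=f\,{\rm vol}$ with $f$ a $C^1$-density bounded from above, and $C$ is a relatively compact open cell of finite Lebesgue volume. Next I would invoke Proposition \ref{prop:extail}, which provides some $\beta>0$ with $\int_C e^{\beta\omega}\,d\psi<\infty$. The elementary inequality $t\leq \beta^{-1}e^{\beta t}$ valid for $t\geq 0$ (a consequence of $e^{\beta t}\geq 1+\beta t$) yields
\[
\int_C\omega\,d\psi \leq \beta^{-1}\int_C e^{\beta\omega}\,d\psi <\infty,
\]
and combining this with the Fubini identity above gives $\tilde\psi(Z)<\infty$.

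There is no real obstacle here: once the exponential tail estimate of Proposition \ref{prop:extail} is in hand, finiteness of $\tilde\psi$ is an immediate corollary, and the entire substance of the lemma has already been packaged into the exponential tail property established in the previous section.
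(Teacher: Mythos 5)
Your proof is correct and follows essentially the same route as the paper's: reduce to $\int_C\omega\,d\psi<\infty$ via the product structure of $\tilde\psi$, then bound $\omega$ by a constant multiple of $e^{\beta\omega}$ using the exponential tail estimate of Proposition~\ref{prop:extail}. The only cosmetic difference is that the paper invokes the lower bound $\omega\geq r_1>0$ to get $e^{\delta\omega}\geq b\omega$, whereas your inequality $t\leq\beta^{-1}e^{\beta t}$ holds for all $t\geq 0$ and dispenses with that hypothesis, which is a minor simplification but not a different argument.
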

\begin{proof}
  We have to show that $\int_C \omega d\psi< \infty$. However, by
  Proposition \ref{prop:extail}, there exists a number
  $\delta >0$ with $\int e^{\delta \omega} d\psi<\infty$. As $\omega$ is bounded
  from below by a positive constant, we have $e^{\delta \omega} \geq C\omega=b\vert \omega \vert$ 
  for a constant $b>0$ 
 and hence $\int \omega d\mu <\infty$ which shows the lemma.
\end{proof}

As a consequence, the suspension of the Markov shift $(\Xi^+,\beth)$ with 
roof function $\omega$ is a semiflow preserving the measure 
$\psi\times dt$, which we normalize to be a probability measure. 
As $\omega$ is a good roof function with exponential tails, we can apply 
Theorem 7.3 of \cite{AGY06}. In the statement of the following result, 
we do not specify the class of test functions for which exponential mixing
can be established and instead refer to Section 7 of \cite{AGY06}. 

\begin{proposition}[Theorem 7.3 of \cite{AGY06}]\label{expmix}
The suspension of the one-sided shift $(\Xi^+,\beth)$ with roof function $\omega$
is exponentially mixing with respect to the measure $\tilde \psi$.  
\end{proposition}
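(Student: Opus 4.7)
The plan is to verify that the hypotheses of Theorem 7.3 of \cite{AGY06} are met in our setting and then simply quote that result. All of the preparatory work has already been done in the preceding sections, so what remains is a short bookkeeping argument.

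First I would recall the ingredients. The expanding Markov map $H=\cup_x H_x:\cup_x C(x)\to C$ provided by Corollary \ref{expandingmark} satisfies the three conditions of Definition 2.2 of \cite{AGY06}: a partition of a full measure subset of a Finsler manifold into open cells, uniform expansion with a universal lower bound $\kappa>1$ on the derivative, and a uniform $C^0$-bound on the derivative of the log-Jacobian of each inverse branch. The one-sided Markov shift $(\Xi^+,\beth)$ is canonically conjugate to this expanding map, the conjugation sending the cylinder $[x]$ onto $C(x)$. Proposition \ref{volumepreserve} furnishes an invariant absolutely continuous probability measure $\psi$ with $C^1$-density bounded above and below.

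Next I would identify the roof function. By Corollary \ref{good} the function $\omega$ is a good roof function in the sense of \cite{AGY06}, meaning it is bounded below by $r_1>0$, its composition with each $H_x^{-1}$ is $C^1$ with a uniform bound on the Finsler norm of the derivative, and it cannot be written as a sum $\alpha+\beta\circ\beth-\beta$ with $\alpha$ locally constant and $\beta$ piecewise $C^1$ (this last property provides the non-cohomological condition that rules out lattice obstructions to mixing, cf.\ Lemma 4.5 of \cite{AGY06}). Proposition \ref{prop:extail} shows moreover that $\omega$ has exponential tails with respect to $\psi$, so in particular $\int\omega\,d\psi<\infty$ by Lemma \ref{finite}, and $\tilde\psi=\psi\times dt$ is a finite invariant measure for the suspension semiflow which we normalize to a probability measure.

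At this point every hypothesis of Theorem 7.3 of \cite{AGY06} is satisfied verbatim: a uniformly expanding Markov map with an invariant absolutely continuous probability measure of bounded density, together with a good roof function having exponential tails. Applying that theorem directly yields that the suspension of $(\Xi^+,\beth)$ with roof function $\omega$ is exponentially mixing with respect to $\tilde\psi$ for the class of test functions specified in Section 7 of \cite{AGY06}. There is no real obstacle here beyond ensuring that each of our definitions matches the formulation in \cite{AGY06}; the only subtle point worth emphasizing is that the non-cohomological property in part (3) of Lemma \ref{roofbound} is exactly what prevents the suspension from being conjugate to a constant-roof suspension over a discrete-spectrum base, which is the precise hypothesis needed in \cite{AGY06} to go from polynomial decay to exponential decay of correlations.
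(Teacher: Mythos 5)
Your proposal is correct and takes essentially the same route as the paper: the paper has no separate proof environment for Proposition~\ref{expmix} and instead relies on the immediately preceding paragraph, which notes that $\omega$ is a good roof function (Corollary~\ref{good}) with exponential tails (Proposition~\ref{prop:extail}), that the suspension measure $\tilde\psi=\psi\times dt$ is finite (Lemma~\ref{finite}) and can be normalized, and then invokes Theorem~7.3 of \cite{AGY06} verbatim. Your bookkeeping checklist (Corollary~\ref{expandingmark}, Proposition~\ref{volumepreserve}, Corollary~\ref{good}, Proposition~\ref{prop:extail}, Lemma~\ref{finite}) assembles exactly these ingredients and applies the same black box, so this matches the paper.
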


Recall that there exists a finite-to-one semi-conjugacy 
of the suspension $(Z,\Psi^t)$ 
of the two-sided shift $(\Xi,\beth)$  with roof function $\omega$ 
onto a $\Phi^t$-invariant subset of ${\cal C}$ of full $\lambda$-measure. 
Furthermore, there exists a $\Psi^t$-invariant probability 
measure $\hat \lambda$ on $Z$ which is mapped by $F$ to a positive multiple of $\lambda$.
The measure $\hat \lambda$ is absolutely continuous with respect to the stable and
unstable foliation, with conditionals in the Lebesgue measure class. 
Let $P^+:\Xi\to \Xi^+$ be the natural forgetful map. It satisfies
$\beth \circ \Pi^+=\Pi^+\circ \beth$.

The shift $\Xi$ defines a cross section for the suspension flow $\Psi^t$ on $Z$.
As the measure $\hat \lambda$ is a probability measure which is invariant under
the flow $\Psi^t$, locally near a point $x\in \Xi$ it 
can be written in the form $\hat \lambda=\hat \psi \times dt$ where
$\hat \psi$ is a measure on $\Xi$. 
Since the roof function $\omega$ is bounded from below by positive constant, the total mass of
$\hat \psi$ is finite. This can be seen by noting that
the set $W=\{(x,t)\in \Xi\times [0,\infty)\mid t<\omega(x)\}$ embeds into $Z$, and by construction, we have
  $\hat \lambda (W)=\int \omega d\hat \psi$. 

Our next goal is to show that 
the measure $\psi$ on $\Sigma^+$ 
can be thought of as a distintegration of $\hat \psi$ via the map $P^+:\Xi\to \Xi^+$. 
To this end we invoke the Definition 2.5 of \cite{AGY06} of a \emph{hyperbolic skew
product over $\beth$}, adapted to our context.
We shall not repeat the definition but rather verify that all the conditions in the definition 
are satisfied.  
As before, we identify
the one-sided shift $(\Xi^+,\beth)$ with the expanding Markov map it defines.

\begin{proposition}\label{hyperbolicskew}
The map $P^+:(\Xi,\hat \psi)\to (\Xi^+,\psi)$ is a 
hyperbolic skew product over $\beth$.  
\end{proposition}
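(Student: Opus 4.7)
The plan is to verify the axioms of Definition 2.5 of \cite{AGY06} one by one, exploiting the product structure of the symbolic coding from Section \ref{sec:shift} together with the fact that $\hat{\lambda}$ pulls back the $\mathrm{SL}(2,\mathbb{R})$-invariant measure $\lambda$ on ${\cal C}$, which has local product structure with respect to the stable and unstable foliations of the Teichm\"uller flow.

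First I would verify the structural axioms. The commutation $\beth \circ P^+ = P^+\circ \beth$ is immediate from the definition of the forgetful map, and by Corollary \ref{expandingmark}, $(\Xi^+,\beth)$ is the required uniformly expanding Markov map on $C$. The fibers of $P^+$ are to be identified with local stable leaves of the Teichm\"uller flow via the semi-conjugacy $F:Z\to {\cal C}$: two sequences $y,y'\in \Xi$ with $P^+(y)=P^+(y')$ share the same forward splitting sequence and hence the same vertical measured geodesic lamination $\cap_{i\geq 0}{\cal V}(y_i)$, so $F(y,0)$ and $F(y',0)$ lie on the same local stable leaf of $\Phi^t$.

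Next I would establish uniform exponential contraction of $\beth$ along these fibers. If $P^+(y)=P^+(y')$, then $\beth^n(y)$ and $\beth^n(y')$ agree on coordinates of index $\geq -n$. Under the identification of fibers with horizontal measured laminations, iterating $\beth$ corresponds to the dual carrying maps built from the Perron--Frobenius operators $B_x^*$ introduced in the proof of Proposition \ref{volumeestimate}. A dual version of the argument in Lemma \ref{smoothcontrol}, applying part (3) of Lemma \ref{fillmore} to the bigon track, should yield contraction by a fixed factor $<1$ per iterate, independent of the letter $x\in {\cal A}$. For the measure-theoretic axioms I would then invoke the suspension construction of Lemma \ref{coding2}: the invariant measure $\hat\lambda$ is locally a product $\hat{\psi}\times dt$, where $\hat\psi$ inherits from $\lambda$ a local product structure with respect to the stable and unstable foliations. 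The pushforward $(P^+)_*\hat{\psi}$ is a $\beth$-invariant measure on $\Xi^+$ in the Lebesgue class on $C$, so by the uniqueness clause of Proposition \ref{volumepreserve} it agrees with $\psi$ up to normalization, and the disintegration of $\hat{\psi}$ along fibers of $P^+$ inherits a bounded-density property from the conditionals of $\lambda$ along local horizontal leaves.

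The main obstacle will be the bounded distortion estimate required by \cite{AGY06} for the fiberwise holonomy, namely a uniform control of the Radon--Nikodym derivatives along stable fibers. I expect to obtain this by a dual of Proposition \ref{volumeestimate}: the fiberwise holonomy between two stable leaves over an inverse branch labelled by $x\in {\cal A}$ is conjugated by the carrying maps to a linear action whose Jacobian is governed by the leading eigenvalue of $B_x^*$, which coincides with $\lambda_1(B_x)$ by Perron--Frobenius duality. Copying the argument of Proposition \ref{volumeestimate}(2) on the dual side, with the role of $\xi$ and $\zeta$ interchanged, should yield a uniform $C^0$ bound on the logarithmic derivative of the fiberwise Jacobian and thereby complete the verification of Definition 2.5.
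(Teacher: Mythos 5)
Your proposal is correct and follows essentially the same strategy as the paper: verify the axioms of Definition 2.5 of \cite{AGY06} by combining the expanding Markov map of Corollary \ref{expandingmark}, the fiber contraction deduced from the dual of Lemma \ref{smoothcontrol}, and the local product structure of $\lambda$. The one genuine difference is in how you produce the disintegration of $\hat\psi$ over $\psi$. The paper builds the conditional measures $\nu_u$ explicitly: for each letter $x\in{\cal A}$ and $\xi\in C(x)$ it identifies the fiber over $\xi$ with a compact graph $C_\xi$ in the cone ${\cal V}^*(\tau)$ cut out by $\iota(\xi,\cdot)=1$, and takes the smooth measure induced by the imaginary parts of the period coordinates, normalized to total mass one. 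You instead take $\hat\psi$ as given by the suspension structure, observe that $(P^+)_*\hat\psi$ is $\beth$-invariant and absolutely continuous, and invoke the uniqueness clause of Proposition \ref{volumepreserve} to identify it with $\psi$. Your route is cleaner at the level of identifying the base measure, but it leaves slightly more work for the Radon--Nikodym distortion bound required in the last clause of Definition 2.5: the paper's explicit description of $\nu_u$ as a smooth measure on $C_\xi$, with defining function ``uniformly bounded from above and below'' and smoothly varying in $\xi$, makes that distortion bound visibly uniform, whereas your approach asks for a dual version of Proposition \ref{volumeestimate}(2) --- a correct idea, since $B_x$ and $B_x^*$ share the Perron--Frobenius eigenvalue $\lambda_1(B_x)$, but one that needs to be carried out. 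In short, same skeleton, different emphasis: the paper front-loads explicit conditionals from period coordinates, you front-load uniqueness and defer the distortion estimate.
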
      
\begin{proof}
  Follwing (3) of Definition 2.5 of \cite{AGY06}, we have to show that there exists a family
  of probability measures $\{\nu_u\}_{u\in C}$ on $\Xi$ which define a disintegration of
$\hat \psi$ over $\psi$ in the following sense. $u\to \nu_u$ is measurable, $\nu_u$ is supported
in $(P^+)^{-1}(u)$ and, for any measurable set $E\subset \Xi$, we have
$\hat \psi(E)=\int \nu_u(E)d\psi(u)$.

Recall that any letter $x\in {\cal A}$ determines a domain of period coordinates for
${\cal C}$. The measure $\lambda$ equals the Lebesgue measure in these period coordinates
up to a positive constant multiplicative factor.  
The cylinder set $[x]\subset \Xi$ for $x\in {\cal A}$ can be identified
with the set of all differentials in ${\cal C}$ with horizontal measured lamination contained in
$C(x)\subset {\cal V}_0(\tau_0)$ and vertical measured lamination contained in
the positive cone $A^*\subset {\cal V}^*(\tau)$ of a 
linear subspace of ${\cal V}^*(\tau)$.  
For $\xi\in C(x)$, 
the requirement
$\iota(\xi,\cdot)=1$ cuts out a compact subset $C_\xi$ of $A^*$ which is a graph over 
the subset of ${\cal V}^*(\tau)$ of tangential measures of 
total mass one, where the defining function is uniformly bounded from above and below
by a positive constant and depends smoothly on $\xi$.

As a consequence, the imaginary parts of the period coordinates for ${\cal C}$ defined by $x$
determine a family of smooth measures on the manifolds $C_\xi$ which can be normalized to
be probability measures. These measure fulfill the requirement of distintegration formulated in 
the beginning of this proof. 

As furthermore,
via Lemma \ref{smoothcontrol}
and duality between ${\cal V}(\tau)$, equivalently the positive shift, and ${\cal V}^*(\tau)$, equivalently the negative shift,
 there exists $\kappa >1$ such that for $y_1,y_2\in \Xi$ with
$P^+(y_i)=P^+(y_2)$ it holds $d(\beth y_1,\beth y_2)\leq \kappa^{-1}d(y_1,y_2)$ where $d$ is the distance function induced
by the (dual of) the Finsler metric $\Vert \,\Vert_{\rm sup}$, 
all requirements in Definition 2.5 of 
\cite{AGY06} are fulfilled. This completes the proof of the proposition.
\end{proof}

As the roof function $\omega$ has exponential tails, we can apply Theorem 2.7 of \cite{AGY06} and obtain.

\begin{theorem}\label{mixing}
The Teichm\"uller flow on affine invariant manifolds is exponentially mixing for the 
${\rm SL}(2,\mathbb{R})$-invariant ergodic measure whose support equals ${\cal C}$.  
\end{theorem}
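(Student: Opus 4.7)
The plan is to assemble the theorem from the pieces already built. Propositions \ref{expmix} and \ref{hyperbolicskew}, together with the exponential-tails property of $\omega$ (Proposition \ref{prop:extail}) and its status as a good roof function (Corollary \ref{good}), match the hypotheses of Theorem 2.7 of \cite{AGY06} verbatim: we have a one-sided uniformly expanding Markov map $(\Xi^+,\beth)$, a good roof function $\omega$ with exponential tails, and a hyperbolic skew product $P^+:(\Xi,\hat\psi)\to(\Xi^+,\psi)$. Invoking that theorem directly yields exponential mixing of the suspension semiflow $(Z,\Psi^t,\hat\lambda)$ over the two-sided shift $(\Xi,\beth)$ with roof function $\omega$, for a suitable class of H\"older (or more generally, dynamically H\"older) observables, as specified in Section 7 of \cite{AGY06}.

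Next I would transfer this mixing statement down to $({\cal C},\Phi^t,\lambda)$ via the finite-to-one semi-conjugacy $F:(Z,\Psi^t)\to({\cal C},\Phi^t)$ constructed in Lemma \ref{coding2}. By Lemma \ref{coding2}, the image of $F$ has full $\lambda$-measure, and by the $\Psi^t$-invariance of $\hat\lambda$ and ergodicity of $\lambda$, we have $F_*\hat\lambda=c\lambda$ for some $c>0$. Given H\"older test functions $f,g$ on ${\cal C}$, their pull-backs $f\circ F,g\circ F$ on $Z$ inherit the appropriate dynamical H\"older regularity with respect to the symbolic metric on $Z$ because $F$ is Lipschitz on each local chart determined by a letter of the alphabet ${\cal A}$ (this follows from the smoothness of the period coordinate identifications used in Lemma \ref{period} together with Lemma \ref{smoothcontrol}). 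Hence the exponential-mixing estimate for the suspension semiflow immediately gives the same estimate for correlations on ${\cal C}$.

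For the quadratic-differential case one reduces to the abelian case via the orientation double cover, as explained at the beginning of Section \ref{sec:shift}: ${\rm SL}(2,\mathbb R)$-invariant ergodic measures on a stratum of quadratic differentials lift to such measures on a stratum of abelian differentials, and the lift commutes with $\Phi^t$. Since the push-forward of an exponentially mixing flow under a finite equivariant covering is again exponentially mixing for the projected measure and the class of H\"older observables (test functions on the base lift to equally regular test functions on the cover), the abelian case implies the quadratic case.

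The main obstacle I expect is not the abstract application of Theorem 2.7 of \cite{AGY06}, which is essentially a black box once the inputs are verified, but rather making precise the regularity transfer in the second paragraph: one must check that H\"older functions on ${\cal C}$ in the Teichm\"uller metric pull back to observables on $Z$ that are H\"older with respect to the symbolic metric implicit in the hyperbolic skew product. This reduces to a uniform Lipschitz bound for the coordinate charts $(\mu,\nu)\mapsto q(\mu,\nu)$ of Lemma \ref{period} restricted to the cylinder sets indexed by ${\cal A}$, together with the uniform expansion/contraction estimates of Lemma \ref{smoothcontrol} applied both to the future (via $\beth$) and the past (via the dual action on ${\cal V}^*(\tau)$). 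Once this comparison of metrics is in hand, the remainder of the argument is mechanical.
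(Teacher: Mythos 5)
Your proposal is correct and follows essentially the same route as the paper: verify the hypotheses of Theorem 2.7 of \cite{AGY06} via Corollary \ref{good}, Proposition \ref{prop:extail}, and Proposition \ref{hyperbolicskew}, then invoke that theorem and transfer the conclusion to $({\cal C},\Phi^t,\lambda)$ through the semi-conjugacy, with the quadratic case reduced to the abelian case by the orientation double cover. Your extra remarks on the regularity transfer for observables are reasonable elaborations of what the paper delegates to Section 7 of \cite{AGY06}, but they do not constitute a different argument.
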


\bigskip

\noindent
MATHEMATISCHES INSTITUT DER UNIVERSIT\"AT BONN\\
ENDENICHER ALLEE 60\\ 
D-53115 BONN, GERMANY\\
\noindent
e-mail: ursula@math.uni-bonn.de

\end{document}